\newtheorem{theorem}{Theorem}
\theoremstyle{plain}
\newtheorem{acknowledgement}{Acknowledgement}
\newtheorem{definition}{Definition}
\newtheorem{example}{Example}
\newtheorem{lemma}{Lemma}
\newtheorem{proposition}{Proposition}
\newtheorem{remark}{Remark}
\numberwithin{equation}{section}
\begin{document}
\title[Divergence and model adequacy]{Divergence and model adequacy, a
semiparametric case study}
\author{Michel Broniatowski}
\address{ Sorbonne Universit\'{e} and Universit\'{e} Paris Cit\'{e}, CNRS,
LPSM, F-75005 Paris, France}
\email{michel.broniatowski@sorbonne-universite.fr}
\author{Justin Moutsouka}
\curraddr{Sorbonne Universit\'{e}, F-75005 Paris, France}
\subjclass[2020]{Primary 62B11; Secondary62F30}
\keywords{Divergence and model adequacy; Semi parametric models; Minimum
divergence inference; smooth function class}

\begin{abstract}
Adequacy for estimation between an inferential method and a model can be
defined through two \ main requirements: firstly the inferential tool should
define a well posed problem when applied to the model; secondly the
resulting statistical procedure should produce consistent estimators.
Conditions which entail these analytical and statistical issues are
considered in the context when divergence based inference is applied for
smooth semiparametric models under moment restrictions.\ A discussion is
also held on the choice of the divergence, extending the classical
parametric inference to the estimation of both parameters of interest and of
nuisance. Arguments in favor of the omnibus choice of the $L^{2}$ and
Kullback Leibler divergences as presented in \cite{Csiszar 1991} are
discussed and motivation for the class of power divergences defined in \cite%
{Basu et al.(1998)} is presented in the context of the present semi
parametric smooth models. A short simulation study illustrates the method.
\end{abstract}

\maketitle



\section{\protect\bigskip }

Introduction

Classical statistical inference deals with parametric models, which can be
seen as finite dimensional manifolds imbedded in the class of all
probability measures on some measurable space. Therefore to any value of the
parameter, a unique distribution in the model; based on a sample governed by
an unknown distribution in the model, many of associated inferential tools\
(for example maximum likelihood) have been studied extensively for a
considerable amount of models; obviously the choice of a parametric model
results from various sources (theoretical, convenience, rule of thumb,
habits, etc) and often cannot be stated as a truth pertaining to the
mechanism which may have generated the data (if such a mechanism exists);
therefore misspecification has to be considered and resulting properties of
the inference under misspecified models is a crucial step in the statistical
analysis. This question has some overlap with the issues pertaining to
robustness, which mainly focus on the role of so called outliers (or more
generally to artifacts in the sampling procedure). Misspecification issues
have led to consider tubes around models (in any topologically meaningful
sense for the statistical standpoint), with resulting properties of the
statistical procedures in this context; note that the inferential procedures
keep being fitted to the parametric setting, with no relevance to the
neighborhood of the model.

The situation gets even more complex when the model is a collection of
subsets of the class of all distributions with non void interior, and this
collection is indexed by a finite dimensional parameter, which is the
parameter of interest; a simple example is when each of these subsets
consists in all distributions with same expectation, which is the current
value of the parameter. This is a special case of the models considered in
this paper. Such models are named as "semiparametric models", since a
distribution in those is characterized through a finite parameter (of
interest), and an infinite dimensional parameter which captures all
characteristics of the distribution except the finite dimensional one.

How can statistical criterions handle the complexity of such a context,
taking into account the specificity of the infinite dimensional part of the
description of the model?\ Or phrasing differently, which is a reasonable
description of the model (in terms of regularity, or other) which still
makes inference on the finite dimensional parameter feasible through
standard parametric inferential tools, and how should the practical
inferential procedure be defined ?

We start with some outlook on the minimization of a pseudo distance between
the empirical measure defined by the data set and a model, defined loosely
as a collection of probability measures which we consider as candidates for
the generic distribution of the data set.\ This framework is generally
referred to as a "divergence based approach"; according to the choice of the
divergence (or "pseudo distance"), many classical methods for estimation and
testing can be recovered; see \cite{Pardo 2006}.

Before entering into our topics in a more detailed way, let us introduce
some preliminary definition on the context of this study.

As \ for the global notation, the space $\mathcal{X}$ \ which bears the data
is the euclidean space $\mathbb{R}^{m}$, endowed with its Borel field; all
involved probability measures are defined on $\left( \mathcal{X},\mathcal{B}%
\left( \mathcal{X}\right) \right) $ . In the sequel $\mathcal{M}^{1}$
designates the class of all probability measures defined on $\left( \mathcal{%
X},\mathcal{B}\left( \mathcal{X}\right) \right) $ and $\mathcal{M}%
^{1}(\lambda )$ the class of all elements in $\mathcal{M}^{1}$ which are
absolutely continuous (a.c) with respect to (w.r.t) the Lebesgue measure $%
\lambda $ on $\mathbb{R}^{m}.$

\subsection{Semi parametric models under moment conditions}

\noindent The models to be considered are defined in two ways.

\begin{itemize}
\item \ Firstly they are defined through constraints on moments; define $l$
linearly independent functions%
\begin{equation}
\left( \mathcal{X},\Theta \right) \ni \left( x,\theta \right) \rightarrow
g_{j}\left( x,\theta \right) \text{ \ }1\leq j\leq l  \label{Fonctions g}
\end{equation}%
where $\Theta $ is included in $\mathbb{R}^{d}$ and $l\leq d.$
\end{itemize}

For any $\theta $ let's denote by $\mathcal{M}_{\theta }$ the set of all
measures in $\mathcal{M}^{1}$ defined by 
\begin{equation}
\mathcal{M}_{\theta }:=\left\{ Q\in \mathcal{M}^{1}\text{ such that}\int
g_{j}(x,\theta )dQ(x)=0,1\leq j\leq l\right\}  \label{M teta}
\end{equation}%
Measures in $\mathcal{M}_{\theta }$ therefore satisfy $l$ linear
constraints. The model $\mathcal{M}$ is defined through 
\begin{equation}
\mathcal{M}=\cup _{\theta \in \Theta }\mathcal{M}_{\theta }  \label{Modele M}
\end{equation}

We further assume identifiability, meaning that $\mathcal{M}_{\theta }\cap 
\mathcal{M}_{\theta ^{\prime }}=\varnothing $ whenever $\theta \neq \theta
^{\prime }$.

\begin{itemize}
\item Secondly they are defined through some smoothness condition, which
substitutes the usual functional form of parametric inference.\ Therefore
all distributions in $\mathcal{M}$ share some common regularity condition,
which are characterized through regularity properties of their densities
with respect to the Lebesgue measure, to be stated in Section \ref{sect
smoothness}.
\end{itemize}

Models $\mathcal{M}$ satisfying the first set of the above conditions \ are
named as "\textit{moment constrained models}". When furthermore the second
set of condition is assumed we call $\mathcal{M}$ a "\textit{smooth moment
constrained model}".

\subsection{Divergences}

A divergence (or discrepancy) between two probability measures $P$ and $Q$
defined on the same measurable space $\mathcal{X}$ equipped with its Borel
field $\mathcal{B}\left( \mathcal{X}\right) $ is a non negative mapping 
\begin{equation*}
\left( P,Q\right) \rightarrow D\left( Q,P\right)
\end{equation*}%
such that $D\left( Q,P\right) =0$ if \ and only if $Q=P.$ No symmetry is
assumed, nor any triangular inequality; therefore a divergence need not be a
distance.\ Constructions of such functions $D$ are numerous; we briefly
sketch the present context leading to specific fields of applications in
statistics and learning. We refer to \cite{Br Stumm 2022} for description
and further references.

Let us introduce the following definition.

\begin{definition}
\label{Def adequacy}A divergence $D$ and a moment constrained model $%
\mathcal{M}$ satisfy adequacy when

(i) For any distribution $P_{0}$ such that $\inf_{Q\in \mathcal{M}}D\left(
Q,P_{0}\right) $ is finite, the problem 
\begin{equation*}
\arg \inf_{Q\in \mathcal{M}}D\left( Q,P_{0}\right)
\end{equation*}%
is a well posed problem

(ii) Given $P_{n}$ the empirical distribution of an i.i.d. sample under $%
P_{0}=P_{\theta _{T}}\in \mathcal{M}$ the estimator 
\begin{equation*}
\widehat{\theta }_{n}:=\arg \inf_{\theta \in \Theta }\inf_{Q\in \mathcal{M}%
_{\theta }}D\left( Q,P_{n}\right)
\end{equation*}%
is consistent in probability , and $\lim_{n\rightarrow \infty }\widehat{%
\theta }_{n}=\theta _{T}$.
\end{definition}

Therefore adequacy holds when conditions on $\mathcal{M}$ and on $D$ lead to
both above analytical and statistical properties.

\subsubsection{\protect\bigskip Decomposable divergences}

Consider a model $\mathcal{P\subset M}^{1}$ defined on $\mathcal{X}\mathbb{\ 
}.$ Say that a divergence 
\begin{equation*}
\left( \mathcal{P},\mathcal{M}^{1}\right) \ni \left( Q,P\right) \rightarrow
D\left( Q,P\right)
\end{equation*}%
is decomposable whenever there exist functionals $\mathfrak{D}^{0}:\mathcal{P%
}\mapsto \mathbb{R}$, $\mathfrak{D}^{1}:\mathcal{M}^{1}\mapsto \mathbb{R}$
and measurable mappings 
\begin{equation}
\rho _{Q}:\mathbb{R}^{d}\mapsto \mathbb{R},\text{ \ \ \ }  \label{24aa}
\end{equation}%
\ such that for all $Q\in \mathcal{P}$\ and some $P\in \mathcal{M}^{1}$ the
expectation $\int \rho _{Q}\mathrm{d}P$\ exists and 
\begin{equation}
D(Q,P)=\mathfrak{D}^{0}(Q)+\mathfrak{D}^{1}(P)+\int \rho _{Q}\mathrm{d}P.
\label{26a}
\end{equation}%
It is customary to restrict $P$ to the subset of $\mathcal{M}^{1}$ for which
the expectation $\int \rho _{Q}\mathrm{d}P$\ exists for all $Q$ in $\mathcal{%
P}$. Examples of decomposable divergences are numerous.\ Those include both
the L$^{2}$ and the Kullback Leibler divergences, but general Csiszar Ali
Silvey Morimoto divergences (CASM) (or so -called $f$-divergences) are not
captured through this definition; we refer to \cite{Broniatowski and Vajda}
and to \cite{Toma Broniatowski 2011} for definitions, examples and
properties; associated estimators are defined as minimizers of $D(Q,P_{n})$
upon $Q$, where $P_{n}$ designates the empirical distribution pertaining to
the observed data set $\left( X_{1},..,X_{n}\right) .$ Looking at (\ref{26a}%
) we see that decomposable divergences lead to simple M-estimators of $Q$
through substitution of $Q$ by $P_{n}$, with 
\begin{equation*}
Q_{n}:=\arg \min_{Q\in \mathcal{P}}\mathfrak{D}^{0}(Q)+\frac{1}{n}%
\sum_{i=1}^{n}\rho _{Q}(X_{i})
\end{equation*}%
whenever defined.

\begin{remark}
When $Q$ runs in a parametric family $\mathcal{P}:=\left\{ P_{\theta
},\theta \in \Theta \right\} $ then the function $\theta \rightarrow \rho
_{\theta }$ is reminiscent of the monotone embedding formalism for
generalized CASM or Bregman divergences; see \cite{Zhang Naudts} and
references therein, and corresponding formalism in generalized exponential
families under moment constraints in \cite{Br Kez studia} and \cite%
{Pelletier 2011} among others.
\end{remark}

\subsection{\protect\bigskip On the choice of the divergence}

\subsubsection{The need for a specific approach}

The identification of \ pertinent distances for inference in models defined
through moment conditions has been considered by Csiszar \cite{Csiszar 1991}%
; the general setting is that when all distributions involved share the same
finite support $K$ and when $\Theta $ is restricted to a single value; the
resulting ill posed inverse problem is somehow similar as the inferential
problem stated in the empirical likelihood paradigm with given moment
condition (hence for\textit{\ moment constrained models}); see next
paragraph for definitions, etc.\ \cite{Csiszar 1991} \ considers projection
rules defined through minimization of a pseudo distance between a given
distribution (the empirical distribution in the statistical context) and the
set of all probability vectors satisfying the moment constraint.\ Basic
assumptions which should be fulfilled by the admissible rules include the so
called "locality property": In relation with the present article, it states
that splitting $\mathcal{X}$ into two disjoint subsets $K_{1}$ and $K_{2}$
such that $\mathcal{X}=K_{1}\cup K_{2}$ ,the corresponding solutions of
those moment problems restricted to $K_{1}$ and $K_{2}$ with same constraint
can be assembled through mixing to produce the solution of the initial
moment problem on $\mathcal{X}$. Csiszar \cite{Csiszar 1991} identifies all
projection rules as pseudo distance minimization operators which satisfy the
locality property with some further natural axioms; those rules are
restricted to the $L^{2}$ projection operator, or to the Kullback-Leibler
operator (which yields the EL paradigm); although developed only for
finitely supported models, these arguments carry over to the continuous
case. In the semi parametric case considered here (namely the \textit{smooth
moment constrained model}), the locality property cannot be considered as a
necessary criterion for the definition of the projection rule; indeed the
global regularity constraint on the density of the solution to the moment
problem cannot generally be recovered through local ones: for example
assuming Lipschitz regularity of the densities of elements in $\mathcal{M}$
on $K_{1}$ and $K_{2}$ does not yield Lipschitz regularity on $\mathcal{X}.$
Henceforth, we are left with the choice of the projection rule, and the
quest for the incidence of its properties on the solution of the moment
problem under regularity assumptions remains open; this motivates this paper.

\bigskip

\subsubsection{\protect\bigskip Alternative procedures}

As mentioned earlier the question which we consider is the following:
starting with a discrepancy measure, which are the admissible models (in the
range of smooth moment constrained ones), for which optimization of the
given discrepancy is a valid procedure? For sake of completeness, we shortly
indicate some plausible alternative techniques, with indications about their
limitations.

The inference on $\theta $ in models defined by moment conditions can be
performed in a natural way for a number of statistical criterions. Indeed
for example for Cressie Read criterions, or more generally for CASM type
ones, a simple plug in of the empirical measure $P_{n}$ in place of $P$ in
the divergence $D(Q,P)$ allows to minimize it on $\mathcal{M}_{\theta }$ for
given $\theta $, and then to optimize upon $\theta .$ This is due to the
fact that the minimizer of $D(Q,P_{n})$ on $\mathcal{M}_{\theta }$ has
support included in the sample points $\left\{ X_{1},..,X_{n}\right\} $.
Therefore the seemingly formidable search for this minimization problem
boils down to a finite dimensional one, on the simplex of $\mathbb{R}^{n}.$
Such is the core argument for Empirical Likelihood (EL) methods and their
extensions (see e.g. \cite{Owen (2001)} or \cite{Br Keziou 2012 JSPI} for a
general CASM\ approach). All minimum empirical divergence methods (therefore
including EL) aim at assessing whether the model $\mathcal{M}$ is valid and
at the estimation of $\theta _{T}$ , the value of the parameter whenever $%
P_{0}$ which designates the distribution of the data equals $P_{\theta _{T}}$%
.$\ $ So they do not provide any knowledge on the density of $P_{\theta _{T}%
\text{ }}$ \ (whenever $P_{0}=P_{\theta _{T}}$ belongs to $\mathcal{M}$) nor
on the density of the projection of $P_{0}$ on $\mathcal{M}$ taking into
account the very definition of the model. Some penalized version of EL has
been proposed (see e.g. \cite{Tang Yang Zhao (2020)} and references therein)
but the context therein seems somehow different from ours. Extending the
parametric setting to a smoothed semiparametric one, it is possible to make
inference both on $\theta _{T}$ and on the density of $P_{\theta _{T}\text{ }%
}.$ We therefore take advantage of the very nature of the chosen criterion
to circumvent the obstacle due to the assumed regularity of the distribution
of the data.\ The same type of approach could be adopted making use of the
minimization of Bregman divergences or others .

Obviously for operational standpoint one could suggest to make use of method
as EL as\ a first step, hence making use of the Kullback Leibler projection
rule , leading to a distribution $Q_{\widehat{\theta }_{n}\text{ }}$%
supported by the sample $\left( X_{1},..,X_{n}\right) $ with $\widehat{%
\theta }_{n}$ converging to $\theta _{T}$ as $n$ tends to infinity, and then
projecting $Q_{\widehat{\theta }_{n}\text{ }}$ on the class of distributions
with density (w.r.t the Lebesgue measure) satisfying the prescribed
smoothness requirement. However this latest projection might lead to the
loss of the moment constraint; furthermore it bears a number of major
difficulties. These include the choice of a projection rule, which would
handle the absolute continuity obstacle, typically by making use of some
smoothing technique. There exists a huge literature on smoothed estimation
in non parametric or semi parametric models through penalization techniques,
out of the scope of this paper.

\subsection{A class of adequate divergences, the power divergences}

Because of the assumed regularity of the densities of measures in $\mathcal{M%
}$ we consider divergences $D(Q,P)$ which are explicit functionals of
densities, excluding therefore CASM divergences (except the Kullback-Leibler
one) for which the density $q:=dQ/d\lambda $ appears only through its ratio
with the density of $P.$ We turn therefore to the Bregman class , and
consider the subclass of power divergences $D_{\alpha }$ introduced by Basu
Hodjt, Harris and Jones (BHHJ) \cite{Basu et al.(1998)}, and which has been
embedded in a flexible family of divergences by Chicoki and Amari \cite%
{Cichocki Amari}; see also \cite{Basak basu extended bregman} for robust
Bregman divergences extending the BHHJ class, and \cite%
{EguciShintoKomori2022} for a comprehensive approach with applications. We
will stick to the basic form $D_{\alpha }$ which proves to be a pertinent
candidate for inference in parametric models. It also bears the benefit of
being indexed by a single parameter $\alpha $, which can be confronted with
the smoothness of the model. Also the power divergence $D_{\alpha }$ is
decomposable, which allows for simple application of classical results on
M-estimators obtained by plug in.

We briefly recall the main features of $D_{\alpha }$ which is defined through

\begin{equation*}
D_{\alpha }(Q,P):=\int \varphi (q(x),p(x))\mathrm{d}x
\end{equation*}%
where 
\begin{equation*}
\varphi (u,v)=\frac{1}{\alpha }u^{\alpha +1}-\left( 1+\frac{1}{\alpha }%
\right) u^{\alpha }\times v+v^{\alpha +1}.
\end{equation*}

Note that in accordance with some widely accepted notation in Information
theory, we denote $Q\in \Omega \rightarrow D_{\alpha }(Q,P)$ the projection
rule which maps the fixed measure $P$ in $\mathcal{M}^{1}(\lambda )$ over
some subset $\Omega $ of $\mathcal{M}^{1}(\lambda )$. This differs from the
original notation in \cite{Basu et al.(1998)}, where the notation is
reversed and the mapping is denoted $Q\rightarrow D_{\alpha }(P,Q).$ The
same notational ambiguity is unfortunately \ common in the global literature
on divergences, and leads to some confusion, for example between CASM
divergences and their conjugates (Neyman and Pearson Chi square, Kullback
Leibler and Likelihood divergences, etc).

\bigskip Indeed the BHHJ divergence is decomposable. In the semi or non
parametric context, it is more advisable to make use of a generic notation ,
namely

\begin{equation*}
\mathfrak{D}^{0}(Q):=\int q^{\alpha +1}d\lambda
\end{equation*}%
\begin{equation*}
\mathfrak{D}^{1}(P):=\frac{1}{\alpha }\int p^{\alpha +1}d\lambda
\end{equation*}%
\begin{equation*}
\rho _{q}:=-\left( 1+\frac{1}{\alpha }\right) q^{\alpha }.
\end{equation*}%
from which (\ref{26a}) holds.

Minimization on $Q$ over some class $\mathcal{M}_{\theta }$ included in $%
\mathcal{M}^{1}(\lambda )$ is equivalent to the minimization of the criterion%
\begin{equation}
R_{\alpha }(Q,P)=\mathfrak{D}^{0}(Q)+\int \rho _{q}dP  \label{R_alfa}
\end{equation}%
over $\mathcal{M}_{\theta },$ which allows for the plug in of $P_{n}$ $\ $\
in place of $P$, resulting in the common M-estimator framework.

We refer to \cite{Broniatowski and Vajda} and \cite{Toma Broniatowski 2011}
for definition, properties and extensions. 
We will consider values of $\alpha $ in $\left( 0,1\right] $ which ensures
that for \ all nonnegative $v$ the mapping $u\rightarrow \varphi (u,v)$
defined on $\mathbb{R}^{+}$ is strictly convex; the case when $\alpha =0$ is
the Kullback Leibler case, not considered here; the case when $\alpha =1$ is
the L$^{2}$ case, which is accessible through our approach.

The developed form of $D_{\alpha }(Q,P)$ is therefore 
\begin{equation}
D_{\alpha }(Q,P)=\int \left\{ q^{\alpha +1}(v)-\left( 1+\frac{1}{\alpha }%
\right) q^{\alpha }(v)p(v)+\frac{1}{\alpha }p^{\alpha +1}(v)\right\} \mathrm{%
d}v.~\text{ }  \label{D_alfa}
\end{equation}

The rationale for the BHHJ\ class in parametric inference in a model $%
\mathcal{P}:=\left\{ P_{\theta }\in \mathcal{M}^{1}(\lambda ),\theta \in
\Theta \right\} $ lies in the fact that whenever \ the integral in the above
display does not depend on the parameter $\theta ,$ as holds for location
models, then minimizing upon $\theta $ in $R_{\alpha }(P_{\theta },P_{n})$
amounts to smooth the usual likelihood score by a factor $p_{\theta
}^{\alpha -1}$ which damps the role of outliers in the estimating equation.

This procedure has been developed extensively and leads to classical limit
results for estimation and testing in parametric contexts; see Theorem 2 in 
\cite{Basu et al.(1998)}.\ The performance of this approach has been
compared to similar treatments making use of CASM divergences, both under
the model and under misspecification; globally speaking, performances of
either CASM divergence approach or power divergence approach are quite
similar (same limit distribution of the estimator and of the test statistics
as for the maximum likelihood approach (which falls in the field of CASM
divergences but not in the field of power ones for $\alpha $ in $\left( 0,1%
\right] $), nearly similar results in simulation runs on small or medium
size samples). Comparing properties between BHHJ divergences with various
values of $\alpha $ and corresponding ones for the power divergences of
Cressie-Read type with various parameters $\gamma $ (which describe the most
commonly used subclass in the CASM divergences) allows to obtain reasonably
robust estimators under contamination, as measures through the Influence
function; see \cite{Toma Broniatowski 2011}. These performances make them
good candidates for inferential tools in the semi parametric framework.

\subsubsection{\protect\bigskip Smooth semi parametric models under moment
conditions, specificity of the present approach}

\noindent Our standpoint is to propose a procedure which by its very nature
produces a smooth density which satisfies the model assumption. 
The drawback clearly lies in appropriate algorithms taking into account the
complexity of the required regularity of the model; a short simulation at
the end of the paper illustrates the behavior of the estimator in a very
simple case; however the present paper provides the necessary setup which
has to be developed, and which results as a common frame for similar
proposals under similar semi parametric framework; for example we may
consider classes of unimodal densities with unknown mode, or models with
densities defined by conditions on their L-moments\cite{Hosking}\cite{Br
decurninge}; in all those examples, the functional context is similar as the
one considered here; the class of densities imbedded in a function class
(denoted $E$ hereunder) has to be tailored accordingly. \bigskip

Consider the estimation of $\theta $ in $\mathcal{M}$; this yields to a two
steps minimization; the first one consists in the search for the minimizer $%
Q_{\theta }$ of $R_{\alpha }(Q,P_{n})$ for $Q$ in the smooth subset of $%
\mathcal{M}_{\theta }$, and the subsequent minimization should select the
value of $\theta $ which solves $\min_{\theta }$ $R_{\alpha }(Q_{\theta
},P_{n})$ where $Q_{\theta }$ solves the first minimization, whenever
possible.\ Firstly the model should be such that all minimization procedures
should be well defined; additional regularity assumptions on the model, with
respect to the variation of $\theta $ in $\Theta $, will be necessary in
order to perform the second optimization. The hypotheses in this paper are
not meant to meet the highest generality, but merely to address a simple
framework where adequacy can be considered and discussed.

The problem at hand writes therefore%
\begin{equation}
\widehat{\theta }_{n}:=\arg \min_{\theta \in \Theta }\min_{Q\in \mathcal{M}%
_{\theta }}R_{\alpha }(Q,P_{n}),  \label{def estim en 2 etapes}
\end{equation}%
where for all $\theta $, $\mathcal{M}_{\theta }$ consists in a family of
distributions with densities w.r.t the Lebesgue measure, with some
prescribed regularity. We need to introduce some description on the model;
this is done in the next Section.

\section{Notation and properties of the smooth semi parametric model}

\subsection{Constraints}

\noindent

All distributions in this model are defined on a compact subset $K$ of $%
\mathbb{R}^{m}.$The linearly independent functions $\left(
g_{1},...,g_{l}\right) $ introduced in (\ref{Fonctions g}) should satisfy
some basic requirements. Each of the functions $g_{l}$ is defined on $K$
with values in $\mathbb{R}.$ hence $g:=\left( g_{1},...,g_{l}\right) ^{T}$
is defined on $K\times \Theta $ with values in $\mathbb{R}^{l}.$ \ The
parameter space $\Theta $ is a compact subset in $\mathbb{R}^{d}$. \newline
We assume that for all $\theta $ the mapping 
\begin{equation}
\left( x,\theta \right) \longrightarrow g(x,\theta )\text{ \ \ is continuous
on \ \ }int(K)\times int\Theta .  \tag{G1}  \label{G1}
\end{equation}%
\newline
\newline
It follows that all functions $g_{l}$'s are uniformly bounded 
\begin{equation}
\sup_{\theta }\sup_{x\in K}\left\Vert g(x,\theta )\right\Vert <\infty 
\newline
\tag{G2}  \label{G2}
\end{equation}%
where $\left\Vert x\right\Vert $ designates the usual \ norm in $\mathbb{R}%
^{l}.$

It also follows from (\ref{G1}) that uniform continuity of $g$ holds in the
sense that as $\theta _{n}\rightarrow \underline{\theta }$ 
\begin{equation*}
\lim_{n\rightarrow \infty }\sup_{x\in K}\left\Vert g(x,\theta _{n})-g(x,%
\underline{\theta })\right\Vert =0.
\end{equation*}

%
%
%
%
%
%
%
%
%
%
%
%
%
%
%
%
%
%

\subsection{\label{sect smoothness}Regularity and smoothness assumptions%
\newline
}

The semi parametric model $\mathcal{M}_{\mathcal{E}}$ will be assumed to
consist in regular measures, in the sense that they should have density with
respect to the Lebesgue measure $\lambda $ on $K$, and that their densities
should be smooth. This is formalized as follows.

Let $\mathcal{M}_{K}^{1}$ be the class of all probability measures with
support $K$, and $\mathcal{M}_{K}^{1}\mathcal{(\lambda )}$ the class of all
probability measures in $\mathcal{M}_{K}^{1}$ which are a.c. w.r.t $\lambda
. $

We now define a subset $E$ of $\mathcal{C}_{b}\left( K\right) $ endowed with
the metric induced by the sup norm on $K$; for $q$ and $q^{\prime }$ in $E$,
denote%
\begin{equation*}
d(q,q^{\prime }):=\sup_{x\in K}\left\vert q(x)-q^{\prime }(x)\right\vert .
\end{equation*}
\newline
Two conditions will be assumed on $E$.\ 

1-We assume that $E$ is uniformly bounded on $K$, namely \newline

\begin{equation}
\sup_{q\in E}\sup_{x\in K}\left\vert q(x)\right\vert <+\infty .  \tag{E1}
\label{E1}
\end{equation}%
\newline

2- We denote by (E2) the following condition, which allows for standard
application of limit results for classes of functions in order to prove
consistency of the present estimation procedure.\ 
\begin{equation}
\sup_{q\in E}\sup_{x,y\in K}\frac{\left\vert q^{\alpha }(x)-q^{\alpha
}(y)\right\vert }{\left\vert x-y\right\vert }\leq M  \tag{E2}
\label{Lipschitz unif pour q alfa}
\end{equation}

(i)When the class $E$ is lower bounded on $K$ by some positive $\gamma $,
i.e. when 
\begin{equation*}
\inf_{q\in E}\inf_{x\in K}q(x)>\gamma >0
\end{equation*}

then we assume that $q^{\delta }$ is Lipschitz uniformly over $E$ for some $%
\delta \in \left( 0,1\right] $ ; this implies that $q^{\delta }$ is
Lipschitz for all $\delta $ in $\left( 0,1\right] $ and hence for $\alpha .\ 
$\ 

\bigskip (ii)If the class $E$ cannot be uniformly lower bounded on $K$ then
we assume that \textit{\ }$q^{\delta }\mathit{\ }$is Lipschitz uniformly on $%
E$\ for some $\delta :$\textit{\ }$0<\delta <\alpha .$

Under (E2)(i) (\ref{Lipschitz unif pour q alfa}) clearly holds, as under
(E2)(ii) since then \ $\sup_{p\in E}\sup_{x\in K}\left\vert \left( p^{\alpha
}\right) ^{\prime }(x)\right\vert $ is bounded.\newline
\newline

\begin{remark}
Condition (\ref{Lipschitz unif pour q alfa}) implies that the class $E$ is
equicontinuous: for all $\varepsilon >0$, there exists $\delta >0$ such that
for all $q$ in $E$,%
\begin{equation*}
\sup_{|x-x^{\prime }|<\delta }|q(x)-q(x^{\prime })|<\varepsilon .\newline
\end{equation*}
\end{remark}

\bigskip Define $\mathcal{E}$ the class of all non negative finite measures $%
Q$ on $K$ with density $q:=dQ/d\lambda $ in $E.$

For each $\theta $ consider the submodel

\begin{equation*}
\mathcal{M}_{\theta }:=\left\{ Q\in \mathcal{M}_{K}^{1}\text{ such that}\int
g(x,\theta )dQ(x)=0\right\} ,
\end{equation*}%
and its smooth counterpart 
\begin{equation*}
\mathcal{M}_{\theta _{\mathcal{E}}}:=\mathcal{M}_{\theta }\cap \mathcal{E},
\end{equation*}%
which we assume to be non void. We define the model $\mathcal{M}$ through 
\begin{equation*}
\mathcal{M}=\cup _{\theta \in \Theta }\mathcal{M}_{\theta }
\end{equation*}%
and the smooth version of $\mathcal{M}$ is defined by

\begin{equation*}
\mathcal{M}_{\mathcal{E}}=\cup _{\theta \in \Theta }\mathcal{M}_{\theta
}\cap \mathcal{E}=\cup _{\theta }\mathcal{M}_{\theta _{\mathcal{E}}}
\end{equation*}%
which we call the smooth moment constrained model.

As quoted before the first additional condition is an identifiability
property of the model $\mathcal{M}$ with respect to $\theta :$\newline
For $\theta \neq \theta ^{\prime }$, 
\begin{equation}
\mathcal{M}_{\theta }\cap \mathcal{M}_{\theta ^{\prime }}=\emptyset .\newline
\tag{M1}  \label{M1}
\end{equation}

We now state that for any $\theta $ all smooth densities in $\mathcal{M}%
_{\theta }$ can be distinguished from their counterparts in $\mathcal{M}%
_{\theta ^{\prime }}$ when $\theta ^{\prime }\neq \theta .$ This can be
phrased as follows: the collection of smooth submodels $\mathcal{M}_{\theta
_{\mathcal{E}}}$ is well separated \ in the sense that for any positive $%
\epsilon $ there exists some positive $\delta $ such that\newline
\begin{equation}
\left( d(\theta ,\theta ^{\prime })>\epsilon \right) \Rightarrow \left(
\inf_{\{Q\in \mathcal{M}_{\theta _{\mathcal{E}}},Q^{\prime }\in \mathcal{M}%
_{\theta _{\mathcal{E}}^{\prime }}\}}d(q,q^{\prime })>\delta \right) . 
\tag{M2}  \label{M2}
\end{equation}%
where we have denoted $q:=dQ/d\lambda $ and $q^{\prime }:=dQ^{\prime
}/d\lambda .$ The same notation will be used in the sequel: for example for $%
Q_{n}$ in $\mathcal{M}_{\mathcal{E}}$, $q_{n}$ will designate $%
dQ_{n}/d\lambda $, etc.

\begin{example}
$g(x)=x-\theta ,$ $\mathcal{M}_{\theta }=\{Q:\int_{K}x\mathrm{d}Q(x)=\theta
\}$ and clearly $\mathcal{M}_{\theta }\cap \mathcal{M}_{\theta }^{\prime
}=\emptyset .$ Whenever $\left\vert \int_{K}x(q(x)-q^{\prime
}(x))dx\right\vert >\varepsilon $ then $\int_{K}\left\vert q(x)-q^{\prime
}(x)\right\vert dx>\varepsilon /K$ , and therefore $d(q,q^{\prime })>\delta $
for some $\delta >0.$
\end{example}

\subsection{The estimator}

Given an i.i.d. sample $(X_{1},X_{2},...,X_{n})$ such that $X_{1}$ has
distribution $P_{\theta _{0}}$ $\in \mathcal{M}_{\theta _{0}\text{ }}$for
some $\theta _{0}\in \Theta $ we intend to provide an estimator for $\theta
_{0}$ minimizing the pseudo-distance between $P_{n}$ and $\mathcal{M}_{%
\mathcal{E}}$ where 
\begin{equation*}
P_{n}:=\frac{1}{n}\sum_{i=1}^{n}\delta _{X_{i}}
\end{equation*}%
is the empirical measure pertaining to the i.i.d. sample $%
(X_{1},X_{2},...,X_{n})$ . Note that the estimation is performed in \ the
smooth model $\mathcal{M}_{\mathcal{E}}$ and not in $\mathcal{M}.$

We introduce the estimator of $\theta _{0}$ in \ $\mathcal{M}_{\mathcal{E}}$
by 
\begin{equation}
\widehat{\theta }_{n}:=\arg \inf_{\theta }\inf_{Q\in \mathcal{M}_{\theta _{%
\mathcal{E}}}}D_{\alpha }(Q,P_{n}).  \label{EstimSmooth}
\end{equation}%
Formula (\ref{EstimSmooth}) provides an natural estimate of $\theta _{0}$ if 
$P_{\theta _{0}}\in \mathcal{M}_{\theta _{0_{\mathcal{E}}}}$.\ Indeed under
the identifiability conditions $(M1)$ and $(M2)$ we prove that the above
estimator converges to $\theta _{0}=\arg \inf_{\theta }\inf_{Q\in \mathcal{M}%
_{\theta }}D_{\alpha }(Q,P_{\theta _{0}})$;( see Theorem 1 and Theorem 9 ).%
\newline
In the alternative case that $P_{\theta _{0}}\in \mathcal{M}_{\theta _{0}}$
but $P_{\theta _{0}}\notin \mathcal{E}$ then formula (\ref{EstimSmooth})
defines an estimator of some $\tilde{\theta}:=\arg \inf_{\theta }\inf_{Q\in 
\mathcal{M}_{\theta _{\mathcal{E}}}}D_{\alpha }(Q,P_{\theta _{0}})$. Hence $%
P_{\tilde{\theta}}$ is the $D_{\alpha }-$projection of $P_{\theta _{0}}$ on $%
\mathcal{M}_{\mathcal{E}}$, and $\tilde{\theta}$ may be different from $%
\theta _{0}$ but still $P_{\tilde{\theta}}$ represents a proxy of $P_{\theta
_{0}}$ in the smooth model. We will consider a natural condition which
entails that $\tilde{\theta}=\theta _{0}$; (see Theorem 1).

\subsection{Adequacy}

For $D_{\alpha }$ and $\mathcal{M}_{\mathcal{E}}$ adequacy, since (\ref{26a}%
) holds and making use of $R_{\alpha }$ defined in (\ref{R_alfa}), Defnition %
\ref{Def adequacy} takes the following form

\begin{definition}
\label{Def adequacy R_alfa}The power divergence $D_{\alpha }$ and the smooth
moment constrained model $\mathcal{M}_{\mathcal{E}}$ satisfy adequacy when

(i) For any distribution $P_{0}$ such that $\inf_{Q\in \mathcal{M}_{\mathcal{%
E}}}D_{\alpha }\left( Q,P_{0}\right) $ is finite, the problem 
\begin{equation*}
\arg \inf_{Q\in \mathcal{M}_{\mathcal{E}}}D_{\alpha }\left( Q,P_{0}\right)
=\arg \inf_{Q\in \mathcal{M}_{\mathcal{E}}}R_{\alpha }\left( Q,P_{0}\right)
\end{equation*}%
is a well posed problem

(ii) Given $P_{n}$ the empirical distribution of an i.i.d. sample under $%
P_{0}=P_{\theta _{T}}\in \mathcal{M}_{\mathcal{E}}$ the estimator 
\begin{equation*}
\widehat{\theta }_{n}:=\arg \min_{\theta \in \Theta }\min_{Q\in \mathcal{M}%
_{\theta _{\mathcal{E}}}}R_{\alpha }(Q,P_{n})
\end{equation*}%
is consistent in probability , and $\lim_{n\rightarrow \infty }\widehat{%
\theta }_{n}=\theta _{T}$.
\end{definition}

\section{Projection and regularization}

We denote $P_{0}$ the distribution of the variable $X_{1}$. In this section
we consider both cases $P_{0}\in \mathcal{M}_{\theta _{0}}$ and $P_{0}\in 
\mathcal{M}_{\theta _{0_{\mathcal{E}}}}$for some $\theta _{0}$.\newline

Suppose that the following condition holds 
\begin{equation}
\inf_{Q\in \mathcal{M}_{\theta _{0_{\mathcal{E}}}}}D_{\alpha
}(Q,P_{0})<\inf_{Q\in \mathcal{M}_{\theta _{\mathcal{E}}}}D_{\alpha
}(Q,P_{0})  \tag{M3}  \label{cond regul par rapp a E}
\end{equation}%
for all $\theta \neq \theta _{0}$ , whenever $P_{0}$ belongs to $\mathcal{M}%
_{\theta _{0}}$ 
which formalizes the fact that $P_{0}$ is approximated smoothly with a
better score in $\mathcal{M}_{\theta _{0_{\mathcal{E}}}}$ than in any $%
\mathcal{M}_{\theta _{\mathcal{E}}}$, whenever $P_{0}$ belongs to $\mathcal{M%
}_{\theta _{0}}.$ Condition (\ref{cond regul par rapp a E}) connects the
smoothness condition of the model with the divergence criterion.\ It implies
that projecting $P_{0}$ on $\mathcal{M}$ or on $\mathcal{M}_{\mathcal{E}}$
identifies $\theta _{0}$ in a unique way, as stated in the following result,
to be proved in the Appendix.

\begin{theorem}
\label{theoreme 1} Under (\ref{cond regul par rapp a E}) it holds, whenever $%
P_{0}$ belongs to $\mathcal{M}$ or to $\mathcal{M}_{\mathcal{E}},$ 
\begin{equation}
\theta _{0}=\arg \inf_{\theta }\inf_{Q\in \mathcal{M}_{\theta _{\mathcal{E}%
}}}D_{\alpha }(Q,P_{0})=\arg \inf_{\theta }\inf_{Q\in \mathcal{M}_{\theta
}}D_{\alpha }(Q,P_{0}).  \label{Thm1}
\end{equation}
\end{theorem}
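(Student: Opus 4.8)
The plan is to prove the two equalities in \eqref{Thm1} separately. Observe first that if $P_{0}\in\mathcal{M}$ (in particular if $P_{0}\in\mathcal{M}_{\mathcal{E}}\subset\mathcal{M}$) then, by the identifiability assumption \eqref{M1}, there is a \emph{unique} $\theta_{0}\in\Theta$ with $P_{0}\in\mathcal{M}_{\theta_{0}}$, and this is the $\theta_{0}$ appearing in the statement. One stays in the regime in which $\inf_{Q\in\mathcal{M}_{\theta_{0_{\mathcal{E}}}}}D_{\alpha}(Q,P_{0})<\infty$ (automatic when $P_{0}\in\mathcal{M}_{\mathcal{E}}$, since then $D_{\alpha}(P_{0},P_{0})=0$); because the densities in $E$ are uniformly bounded on the compact $K$, this finiteness is in fact equivalent to $\int p_{0}^{\alpha+1}\,d\lambda<\infty$, a fact I would record at the outset. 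The left-hand equality $\theta_{0}=\arg\inf_{\theta}\inf_{Q\in\mathcal{M}_{\theta_{\mathcal{E}}}}D_{\alpha}(Q,P_{0})$ is then essentially a rephrasing of \eqref{cond regul par rapp a E}: applied with this $\theta_{0}$ it says that for every $\theta\neq\theta_{0}$,
\[
\inf_{Q\in\mathcal{M}_{\theta_{0_{\mathcal{E}}}}}D_{\alpha}(Q,P_{0})<\inf_{Q\in\mathcal{M}_{\theta_{\mathcal{E}}}}D_{\alpha}(Q,P_{0}),
\]
so the map $\theta\mapsto\inf_{Q\in\mathcal{M}_{\theta_{\mathcal{E}}}}D_{\alpha}(Q,P_{0})$ is \emph{strictly} minimised at $\theta_{0}$, which gives existence and uniqueness of the outer $\arg\inf$ and identifies it as $\theta_{0}$.

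For the right-hand equality I would first note that $\inf_{Q\in\mathcal{M}_{\theta_{0}}}D_{\alpha}(Q,P_{0})=0$, since $P_{0}\in\mathcal{M}_{\theta_{0}}$ and $D_{\alpha}(P_{0},P_{0})=0$; hence it suffices to show $\inf_{Q\in\mathcal{M}_{\theta}}D_{\alpha}(Q,P_{0})>0$ for every $\theta\neq\theta_{0}$, which again makes $\theta_{0}$ the unique outer $\arg\inf$. I would argue by contradiction: suppose $Q_{k}\in\mathcal{M}_{\theta}$ with $D_{\alpha}(Q_{k},P_{0})\to0$. Finiteness forces $Q_{k}\ll\lambda$ with density $q_{k}$ (with the convention $D_{\alpha}(Q,P_{0})=+\infty$ when $Q\not\ll\lambda$), and from
\[
D_{\alpha}(Q_{k},P_{0})=\int q_{k}^{\alpha+1}\,d\lambda-\frac{\alpha+1}{\alpha}\int q_{k}^{\alpha}p_{0}\,d\lambda+\frac{1}{\alpha}\int p_{0}^{\alpha+1}\,d\lambda
\]
together with Hölder's inequality with exponents $\tfrac{\alpha+1}{\alpha}$ and $\alpha+1$, one gets $\sup_{k}\int q_{k}^{\alpha+1}\,d\lambda<\infty$; since $\alpha+1>1$ and $\lambda|_{K}$ is finite, de la Vallée--Poussin yields uniform integrability of $\{q_{k}\}$. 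On the other hand $\int\varphi(q_{k},p_{0})\,d\lambda\to0$ with $\varphi\geq0$ gives $\varphi(q_{k},p_{0})\to0$ in $L^{1}$, hence $\lambda$-a.e. along a subsequence; since for fixed $v\geq0$ the map $u\mapsto\varphi(u,v)$ is nonnegative, coercive and vanishes only at $u=v$, this forces $q_{k}\to p_{0}$ $\lambda$-a.e. along that subsequence. Uniform integrability plus a.e. convergence give, by Vitali's theorem, $q_{k}\to p_{0}$ in $L^{1}(K,\lambda)$, hence $Q_{k}\to P_{0}$ in total variation, whence, using the boundedness of $g$ from \eqref{G2},
\[
0=\int g(x,\theta)\,dQ_{k}(x)\longrightarrow\int g(x,\theta)\,dP_{0}(x),
\]
so $P_{0}\in\mathcal{M}_{\theta}$; combined with $P_{0}\in\mathcal{M}_{\theta_{0}}$ and $\theta\neq\theta_{0}$ this contradicts \eqref{M1}. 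Therefore $\inf_{Q\in\mathcal{M}_{\theta}}D_{\alpha}(Q,P_{0})>0$, which finishes the right-hand equality.

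The main obstacle is this last implication: turning $D_{\alpha}(Q_{k},P_{0})\to0$ into genuine (total-variation) convergence $Q_{k}\to P_{0}$, which requires first squeezing uniform-integrability control out of the super-linear term $\int q_{k}^{\alpha+1}$ before a Vitali-type argument applies, and requires care that minimising sequences in $\mathcal{M}_{\theta}$ really are absolutely continuous. Everything else — the left-hand equality, and the reduction of the right-hand equality to a strict-positivity statement — is bookkeeping built directly on \eqref{cond regul par rapp a E} and \eqref{M1}. A technically equivalent route for the right-hand equality, avoiding the subsequence bookkeeping, would be to use weak compactness of $\mathcal{M}_{\theta}$ (Prokhorov, $K$ compact) together with weak lower semicontinuity of the convex, super-linear integral functional $Q\mapsto D_{\alpha}(Q,P_{0})$: the infimum is then attained, and if it equalled $0$ it would be attained at $Q=P_{0}$, again forcing $P_{0}\in\mathcal{M}_{\theta}$ and contradicting \eqref{M1} via \eqref{G2}.
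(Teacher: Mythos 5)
Your proposal is correct, and it is organised differently from the paper's own proof. The paper splits into two cases according to whether $P_{0}\in\mathcal{M}_{\mathcal{E}}$ or $P_{0}\in\mathcal{M}\setminus\mathcal{M}_{\mathcal{E}}$: in the first case it observes that $\inf_{Q\in\mathcal{M}_{\theta_{0_{\mathcal{E}}}}}D_{\alpha}(Q,P_{0})=0$ and rules out a competing minimiser $\theta_{1}$ by passing to $\inf_{Q\in\mathcal{M}_{\theta_{1}}}D_{\alpha}(Q,P_{0})=0$ and then invoking (\ref{M1}); in the second case it takes the right-hand equality as given (the ``Recall that'' step) and derives the left-hand one directly from (\ref{cond regul par rapp a E}). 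You instead split by the two equalities: the left-hand one you read off from (\ref{cond regul par rapp a E}) uniformly in both cases (which is cleaner, and is what the paper does only in its second case), and the right-hand one you reduce to strict positivity of $\inf_{Q\in\mathcal{M}_{\theta}}D_{\alpha}(Q,P_{0})$ for $\theta\neq\theta_{0}$, which you then actually prove via the H\"{o}lder bound, de la Vall\'{e}e--Poussin, the pointwise identification $\varphi(u,v)=0\Leftrightarrow u=v$, and Vitali, concluding $Q_{k}\to P_{0}$ in total variation and hence $P_{0}\in\mathcal{M}_{\theta}$ by (\ref{G2}), contradicting (\ref{M1}). This last chain is precisely the implication ``the infimum of $D_{\alpha}(\cdot,P_{0})$ over $\mathcal{M}_{\theta}$ vanishes only if $P_{0}\in\mathcal{M}_{\theta}$'' that the paper uses but does not justify, so your version buys rigour exactly where the paper is terse, at the cost of length; the paper's version buys brevity by leaning on that implication and on the case split. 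One cosmetic remark: the a.e.\ convergence you extract holds only along a subsequence, so either pass the moment identity to the limit along that subsequence (which already yields the contradiction) or note that the argument applies to every subsequence; as written this is a one-line fix, not a gap.
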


Before handling inference we need to explore some properties of minimum
pseudo-distance approximations in $\mathcal{M}_{\mathcal{E}}.$ We will make
use of a number of definitions, which we quote now. For fixed $P$ in $%
\mathcal{M}_{\mathcal{E}}$ the divergence $D_{\alpha }(.,P)|_{\mathcal{E}}$
is the restriction of $Q\rightarrow D_{\alpha }(Q,P)$ on $\mathcal{M}_{%
\mathcal{E}}$. \newline
For fixed $\theta $, let therefore the projection of $P$ on $\mathcal{M}%
_{\theta _{\mathcal{E}}}$ be 
\begin{equation*}
Q_{\theta }^{\ast }=\arg \inf_{Q\in \mathcal{M}_{\theta _{\mathcal{E}%
}}}D_{\alpha }(Q,P)|_{\mathcal{E}}
\end{equation*}%
whenever defined. \newline
Since for $Q\in \mathcal{M}_{\mathcal{E}}$ 
\begin{equation*}
D_{\alpha }(Q,P)|_{\mathcal{E}}=D_{\alpha }(Q,P)
\end{equation*}%
it holds%
\begin{equation*}
\arg \inf_{Q\in \mathcal{M}_{\theta _{\mathcal{E}}}}D_{\alpha }(Q,P)=\arg
\inf_{Q\in \mathcal{M}_{\theta _{\mathcal{E}}}}D_{\alpha }(Q,P)|_{\mathcal{E}%
}=Q_{\theta }^{\ast }.
\end{equation*}

We first set some general definition. 

\begin{definition}
\label{Definition 1}Let $\Omega $ be some subset of $\mathcal{M}^{1}$. The $%
\alpha -$divergence between the set $\Omega $ and a p.m. $P$ is defined by 
\begin{equation*}
D_{\alpha }(\Omega ,P):=\inf_{Q\in \Omega }D_{\alpha }(Q,P).
\end{equation*}%
A probability measure $Q^{\ast }\in \Omega $, such that $D_{\alpha }(Q^{\ast
},P)<\infty $ and 
\begin{equation*}
D_{\alpha }(Q^{\ast },P)\leq D_{\alpha }(Q,P)~\text{ for all }~Q\in \Omega
\end{equation*}%
is called a projection of $P$ on $\Omega $. This projection may not exist,
or may be not defined uniquely.
\end{definition}

\begin{definition}
\label{def2} The sequence of functions $q_{n}\in E$ tends to $q$ strongly if
and if 
\begin{equation*}
\sup_{x\in K}|q_{n}(x)-q(x)|\rightarrow0.
\end{equation*}
\end{definition}

Let $(Q_{n})_{n}\subset \mathcal{M}_{\mathcal{E}}$ ; if there exists some $q$
\ in $E$ such that 
\begin{equation}
\sup_{x\in K}|q_{n}(x)-q(x)|\rightarrow 0,  \label{q_n tend vers q}
\end{equation}%
then we say that $Q_{n}$ converges strongly to a non negative finite measure 
$Q$ such that $Q(A)=\int 1_{A}(x)q(x)\mathrm{d}x$ for all $A\in \mathcal{B}(%
\mathbb{R}^{m})$ .Denote $\left( Q_{n}{\underset{st}{\longrightarrow }}%
Q\right) $ when (\ref{q_n tend vers q}) holds.

\section{Projection: existence and uniqueness}

\label{Section3}

Let $P$ belong to $\mathcal{M}_{K}^{1}\left( \lambda \right) $ such that $%
\inf_{Q\in \mathcal{M}}$ $D_{\alpha }(Q,P)$ is finite.

We need some preliminary result pertaining to the properties of $\mathcal{M}%
_{\mathcal{E}}.$

\subsection{Closure of $\mathcal{M}_{\mathcal{E}}$\label{Subsect3}}

Conditions (E1) and (E2) imply that due to Arzela-Ascoli Theorem the set $E$
is pre-compact when endowed by the strong topology (see Definition \ref%
{Definition 1}). \newline
Let $(Q_{n})$ be a family of probability measures on $K$; it holds

\begin{proposition}
\label{theoreme 2 copy(1)} 
$\mathcal{M}_{\mathcal{E}}$ is relatively compact in $\mathcal{E}$ endowed
with the strong topology.
\end{proposition}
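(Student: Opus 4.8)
The plan is to show that any sequence $(Q_n)$ in $\mathcal{M}_{\mathcal{E}}$ has a subsequence converging strongly to some $Q$ in $\mathcal{E}$, i.e. with density in $E$. The engine is Arzel\`a--Ascoli: by (E1) the family $E$ is uniformly bounded, and by the Remark following (E2) the family $E$ is equicontinuous on the compact set $K$; hence $E$ is relatively compact in $\mathcal{C}_b(K)$ with the sup-norm topology, as already recorded in the text. So from the sequence of densities $(q_n)$ one extracts a subsequence $q_{n_k}\to q$ uniformly on $K$ for some $q\in\mathcal{C}_b(K)$, and since $E$ is closed (it is the intersection of the closed conditions (E1), (E2), and any lower-bound or nonnegativity constraints, all preserved under uniform limits), in fact $q\in E$.

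Next I would check that the strong limit $Q$ is again a probability measure and lies in $\mathcal{M}_{\mathcal{E}}$, or rather in its closure in $\mathcal{E}$ — note the statement only claims \emph{relative} compactness, so strictly one only needs the limit to lie in $\mathcal{E}$, which is already secured by $q\in E$ and $q\ge 0$. Still, for later use one observes that uniform convergence $q_{n_k}\to q$ on the bounded set $K$ gives $\int_K q_{n_k}\,d\lambda \to \int_K q\,d\lambda$, so $\int_K q\,d\lambda = 1$ and $Q$ is a probability measure; and, since each $g_j(\cdot,\theta_{n_k})$ is uniformly bounded by (G2), a diagonal/compactness argument over the parameter (using compactness of $\Theta$ and the uniform continuity of $g$ stated after (G1)) would let one pass to the limit in the moment constraints as well. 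That last point is not needed for the bare statement of Proposition, but it is the natural companion fact.

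The one genuine subtlety — the step I expect to be the main obstacle, modest though it is — is verifying that the limiting density $q$ actually belongs to $E$ rather than merely to its sup-norm closure in $\mathcal{C}_b(K)$. This requires that the defining conditions on $E$ be closed under uniform convergence: (E1) clearly is; (E2), being a uniform Lipschitz bound on $q^\alpha$, passes to the limit because $u\mapsto u^\alpha$ is continuous on $[0,\infty)$ so $q_{n_k}^\alpha\to q^\alpha$ uniformly and Lipschitz constants are preserved in the limit; a positive lower bound $q\ge\gamma$, if assumed, likewise survives. Hence $E$ is sup-norm closed, and therefore compact by Arzel\`a--Ascoli, which is even a little stronger than the asserted relative compactness of $\mathcal{M}_{\mathcal{E}}\subset\mathcal{E}$.

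Finally I would assemble these observations: given $(Q_n)\subset\mathcal{M}_{\mathcal{E}}$, extract $q_{n_k}\to q\in E$ uniformly by Arzel\`a--Ascoli, set $Q(A):=\int_A q\,d\lambda$, and conclude $Q_{n_k}\underset{st}{\longrightarrow}Q$ with $Q\in\mathcal{E}$, which is exactly relative compactness of $\mathcal{M}_{\mathcal{E}}$ in $\mathcal{E}$ for the strong topology.
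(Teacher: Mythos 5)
Your proof is correct and follows the paper's own route: Arzelà--Ascoli applied to $E$ via the uniform bound (E1) and the equicontinuity implied by (E2), followed by the observation that uniform convergence of the densities on the compact $K$ yields strong convergence of the measures and preserves total mass (the paper invokes Prohorov for the latter, you integrate directly — an immaterial difference). The one place you go beyond the paper is the claim that $E$ is sup-norm closed "as an intersection of closed conditions": $E$ is a given class merely \emph{assumed} to satisfy (E1)--(E2), not defined by them, so its closedness is not automatic; but as you yourself note this step is not needed for the bare statement of relative compactness, and the paper treats membership of the limit in the model separately (Theorem \ref{theoreme 3}).
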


Let $\left\{ n_{j}\right\} $ $\subset \left\{ n\right\} $ and $\frac{%
dQ_{n_{j}}}{d\lambda }(x)=q_{n_{j}}(x)$,and $\sup_{x\in
K}|q_{n_{j}}(x)-q(x)|\longrightarrow 0$ then $(Q_{n_{j}})$ converges to some
p.m $Q$ and $Q(A)=\int_{A}q(x)\mathrm{d}\lambda (x)$ for all $A$ in $%
\mathcal{B}(K)$. 
\newline
\newline
Indeed 
\begin{eqnarray*}
\left\vert Q_{n_{j}}(A)-\int_{A}q(x)\mathrm{d}\lambda (x)\right\vert
&=&\left\vert \int 1_{A}(x)q_{n_{j}}(x)\mathrm{d}\lambda (x)-\int
1_{A}(x)q(x)\mathrm{d}\lambda (x)\right\vert \\
&\leq &\sup_{x\in K}\left\vert q_{n_{j}}(x)-q(x)\right\vert \lambda
(A)\longrightarrow 0.
\end{eqnarray*}%
So $(Q_{n_{j}})_{j\geq 1}$ converges to $Q$, such that $q(x)=\frac{dQ}{%
d\lambda }(x)$ .That $Q$ is a probability measure is a consequence of
Prohorov Theorem (see e.g. \cite{Billingsley1968}) since $(Q_{n})_{n\geq 1}$
is a tight family of p.m's . 

It follows that

\begin{theorem}
\label{theoreme 3} 
Under ($\ref{G1}$)$,$($\ref{G2}$) the set $\mathcal{M}_{\mathcal{E}}$ is
closed for the strong topology of convergence stated in Definition \ref%
{Definition 1}.
\end{theorem}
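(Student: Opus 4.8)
The plan is to combine the relative compactness furnished by Proposition~\ref{theoreme 2 copy(1)} with a compactness argument on the parameter, the crux being to pass to the limit \emph{simultaneously} in both arguments of $g$ using (G1)--(G2). Concretely, I would take an arbitrary sequence $(Q_n)_n\subset\mathcal{M}_{\mathcal{E}}$ converging strongly (Definition~\ref{Definition 1}) to some $Q$, so that the densities $q_n:=dQ_n/d\lambda\in E$ satisfy $\sup_{x\in K}\left\vert q_n(x)-q(x)\right\vert\to 0$, and show $Q\in\mathcal{M}_{\mathcal{E}}$. By Proposition~\ref{theoreme 2 copy(1)} and the discussion following it, $Q$ is already a probability measure on $K$ with $dQ/d\lambda=q$; moreover (E1) and (E2) pass to uniform limits (for (E2) one invokes continuity of $t\mapsto t^{\alpha}$ on $[0,\infty)$ for $\alpha\in(0,1]$), so $q\in E$ and $Q\in\mathcal{E}$. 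Thus the whole content left to prove is that $\int g(x,\bar\theta)\,dQ(x)=0$ for some $\bar\theta\in\Theta$, which then places $Q$ in $\mathcal{M}_{\bar\theta_{\mathcal{E}}}\subset\mathcal{M}_{\mathcal{E}}$.

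Next I would use that, for each $n$, membership $Q_n\in\mathcal{M}_{\mathcal{E}}$ provides a $\theta_n\in\Theta$ with $\int g(x,\theta_n)q_n(x)\,dx=0$. Since $\Theta$ is compact, I pass to a subsequence (not relabelled) with $\theta_n\to\bar\theta\in\Theta$, along which $q_n\to q$ uniformly still holds. The core estimate, with $\mathbb{R}^{l}$-valued integrals, is
\begin{align*}
\left\Vert \int g(x,\bar\theta)q(x)\,dx\right\Vert
&=\left\Vert \int g(x,\bar\theta)q(x)\,dx-\int g(x,\theta_n)q_n(x)\,dx\right\Vert\\
&\le \int\left\Vert g(x,\bar\theta)-g(x,\theta_n)\right\Vert q(x)\,dx+\int\left\Vert g(x,\theta_n)\right\Vert\left\vert q(x)-q_n(x)\right\vert\,dx\\
&\le \sup_{x\in K}\left\Vert g(x,\bar\theta)-g(x,\theta_n)\right\Vert+\Big(\sup_{\theta}\sup_{x\in K}\left\Vert g(x,\theta)\right\Vert\Big)\lambda(K)\sup_{x\in K}\left\vert q(x)-q_n(x)\right\vert .
\end{align*}
Here I use that $Q$ is a probability measure to bound $\int q\,d\lambda$ by $1$ in the first summand. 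The first term goes to $0$ by the uniform (in $x$) continuity of $\theta\mapsto g(\cdot,\theta)$ derived from (G1); the second goes to $0$ by (G2), by $\lambda(K)<\infty$ ($K$ compact), and by the strong convergence $q_n\to q$. Letting $n\to\infty$ gives $\int g(x,\bar\theta)q(x)\,dx=0$, hence $Q\in\mathcal{M}_{\mathcal{E}}$, and $\mathcal{M}_{\mathcal{E}}$ is closed for the strong topology.

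I expect the main obstacle --- modest here --- to be exactly this joint limit in the two slots of $g$: one must control $g(x,\theta_n)$ against $g(x,\bar\theta)$ \emph{uniformly in} $x$ (not merely pointwise), which is what (G1) delivers, while simultaneously replacing $q_n$ by $q$, which needs the uniform bound (G2) on $g$ together with $\lambda(K)<\infty$. Once these two ingredients are in hand, the remainder is bookkeeping with the compactness of $\Theta$ and $K$ and with Proposition~\ref{theoreme 2 copy(1)}.
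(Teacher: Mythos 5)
Your proposal is correct and follows essentially the same route as the paper's own proof: identify the limit density via Prohorov/the discussion after Proposition~\ref{theoreme 2 copy(1)}, check that (E1)--(E2) survive uniform limits, and recover the moment constraint by extracting a convergent subsequence $\theta_n\to\bar\theta$ in the compact $\Theta$ and splitting the error exactly as in the paper, controlling one piece by the uniform-in-$x$ continuity from (G1) and the other by the uniform bound (G2) together with $\lambda(K)<\infty$. The only point the paper adds that you omit is the observation (via (M1)) that all subsequential limits of $(\theta_n)$ coincide, which is not needed for closedness itself.
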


The proof of Theorem \ref{theoreme 3} is in the Appendix.

\subsection{Existence and uniqueness of the $D_{\protect\alpha }$-projection
of $P$ on $\mathcal{M}_{\mathcal{E}}$}

It holds

\begin{proposition}
\label{proposition 2} 
For any $\alpha \in \left( 0,1\right] $ the divergence function $Q\mapsto
D_{\alpha }(Q,P)$ from $\mathcal{M}_{K}^{1}(\lambda )$ to $[0,+\infty ]$ is
l.s.c \ for the strong topology.\newline
\end{proposition}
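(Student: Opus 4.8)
The plan is to reduce the statement to Fatou's lemma. The starting point is the integral representation $D_{\alpha}(Q,P)=\int_{K}\varphi(q,p)\,\mathrm{d}\lambda$ (equivalently the developed form $(\ref{D_alfa})$), together with the observation that, for $\alpha\in(0,1]$, the integrand $\varphi(q(x),p(x))$ is a \emph{nonnegative} measurable function of $x$: for each fixed $v\geq 0$ the map $u\mapsto\varphi(u,v)$ is strictly convex on $\mathbb{R}^{+}$ with minimum value $0$, which is precisely the property making $D_{\alpha}$ a divergence. In particular $D_{\alpha}(Q,P)$ is a well-defined element of $[0,+\infty]$ for every $Q\in\mathcal{M}_{K}^{1}(\lambda)$, with no $\infty-\infty$ indeterminacy to worry about.

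Next I would unwind what strong convergence means here. If $Q_{n}\to Q$ for the strong topology, then by Definition \ref{def2} the densities satisfy $\sup_{x\in K}|q_{n}(x)-q(x)|\to 0$, hence $q_{n}(x)\to q(x)$ for every $x\in K$. Since $(u,v)\mapsto\varphi(u,v)$ is continuous on $\mathbb{R}^{+}\times\mathbb{R}^{+}$, this gives $\varphi(q_{n}(x),p(x))\to\varphi(q(x),p(x))$ for $\lambda$-almost every $x\in K$. Applying Fatou's lemma to the nonnegative functions $\varphi(q_{n},p)$ yields
\begin{equation*}
\liminf_{n\to\infty}D_{\alpha}(Q_{n},P)=\liminf_{n\to\infty}\int_{K}\varphi(q_{n},p)\,\mathrm{d}\lambda\ \geq\ \int_{K}\liminf_{n\to\infty}\varphi(q_{n},p)\,\mathrm{d}\lambda=\int_{K}\varphi(q,p)\,\mathrm{d}\lambda=D_{\alpha}(Q,P),
\end{equation*}
which is exactly lower semicontinuity for the strong topology.

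Finally I would add the remark that on $\mathcal{M}_{\mathcal{E}}$ one in fact gets continuity, not merely lower semicontinuity, whenever $\int_{K}p^{\alpha+1}\,\mathrm{d}\lambda<\infty$: by $(\ref{E1})$ the densities in $E$ are uniformly bounded by some $C<\infty$, so $t\mapsto t^{\alpha+1}$ being Lipschitz on $[0,C]$ gives $\Vert q_{n}^{\alpha+1}-q^{\alpha+1}\Vert_{\infty}\to 0$, while $\Vert q_{n}^{\alpha}-q^{\alpha}\Vert_{\infty}\leq\Vert q_{n}-q\Vert_{\infty}^{\alpha}\to 0$ by subadditivity of $t\mapsto t^{\alpha}$; since $\lambda(K)<\infty$ and $\int_{K}p\,\mathrm{d}\lambda=1$, all three terms of $D_{\alpha}(Q_{n},P)$ converge to their limits (the term $\frac{1}{\alpha}\int_{K}p^{\alpha+1}\,\mathrm{d}\lambda$ not depending on $Q$). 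I do not anticipate a real obstacle: the strong topology is precisely strong enough for the limit to pass through, and the only facts doing genuine work are the nonnegativity of $\varphi$ (so that Fatou applies with no domination hypothesis) and the restriction $\alpha\in(0,1]$ (which underlies both that nonnegativity and the elementary Hölder-type estimates above). One small point to flag in writing is that the statement is phrased on all of $\mathcal{M}_{K}^{1}(\lambda)$, whereas strong convergence in the sense of Definition \ref{def2} only concerns sequences with densities in $E$, so the uniform bound $(\ref{E1})$ is indeed available along any such sequence.
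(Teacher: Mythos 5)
Your proof is correct, and it takes a genuinely different route from the paper's. The paper proves lower semicontinuity indirectly, by showing that each level set $A_{\mathcal{E}}(a)$ is closed: for a sequence $Q_{n}$ in $A_{\mathcal{E}}(a)\cap\mathcal{M}_{\mathcal{E}}$ with $\sup_{x\in K}|q_{n}(x)-q(x)|\to 0$, it writes out $\varphi(q_{n},p)-\varphi(q,p)=q_{n}^{\alpha+1}-q^{\alpha+1}-\left(1+\tfrac{1}{\alpha}\right)p\left(q_{n}^{\alpha}-q^{\alpha}\right)$, uses the uniform boundedness of $E$ and $\int p\,\mathrm{d}\lambda=1$ to get convergence of the integrals, and concludes $D_{\alpha}(Q,P)\leq a$. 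In other words, the paper actually establishes \emph{continuity} of $Q\mapsto D_{\alpha}(Q,P)$ along strongly convergent sequences with densities in $E$, and reads off closedness of the level sets; your closing remark reproduces essentially this computation. Your main argument instead goes through Fatou's lemma, and the key observation carrying it is the one you isolate: $u\mapsto\varphi(u,v)$ attains its minimum value $0$ at $u=v$, so the integrand is pointwise nonnegative and no domination hypothesis is needed. This is leaner and strictly more general: it yields lower semicontinuity on all of $\mathcal{M}_{K}^{1}(\lambda)$ from pointwise convergence of the densities alone, without invoking (E1), whereas the paper's argument is confined to sequences in $\mathcal{M}_{\mathcal{E}}$. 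It also sidesteps a small blemish in the paper's write-up, namely the intermediate claim that $\sup_{x\in K}|\varphi(q_{n}(x),p(x))-\varphi(q(x),p(x))|\to 0$, which as stated would require $p$ bounded (the integrated bound, using $\int p\,\mathrm{d}\lambda=1$, is what is really needed and what your remark uses). The trade-off is that Fatou alone gives only the inequality $\liminf_{n}D_{\alpha}(Q_{n},P)\geq D_{\alpha}(Q,P)$, while the paper's stronger continuity statement on $\mathcal{M}_{\mathcal{E}}$ is convenient elsewhere; since you supply that as a supplementary remark, nothing is lost.
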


The proof of the above Proposition is in the Appendix.

Let $a>0$ and 
\begin{equation*}
A_{\mathcal{E}}(a):=\left\{ Q\in \mathcal{M}_{\mathcal{E}}:D_{\alpha
}(Q,P)\leq a\right\}
\end{equation*}%
be the $a-$level set of the divergence $Q\rightarrow D_{\alpha }(Q,P).$%
\newline

\begin{proposition}
\label{proposition 3} 
For all $a>0$, the level set $A_{\mathcal{E}}(a)$ of $Q\rightarrow D_{\alpha
}(Q,P)$ is compact in the strong topology.\ Furthermore for any $\theta $ in 
$\Theta $ 
\begin{equation*}
Q^{\ast }=\arg \inf_{Q\in \mathcal{M}_{\theta _{\mathcal{E}}}}D_{\alpha
}(Q,P).
\end{equation*}%
exists and is unique.
\end{proposition}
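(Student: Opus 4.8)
The plan is to obtain the compactness of the level set purely topologically from facts already established, then to deduce existence of the projection from the extreme value principle for lower semicontinuous functions on a compact set, and uniqueness from the strict convexity of $u\mapsto\varphi(u,v)$ valid for $\alpha\in(0,1]$.

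First I would note that $\mathcal{M}_{\mathcal{E}}$ is compact for the strong topology, being relatively compact (Proposition~\ref{theoreme 2 copy(1)}) and closed (Theorem~\ref{theoreme 3}). Since $A_{\mathcal{E}}(a)=\mathcal{M}_{\mathcal{E}}\cap\{Q:D_{\alpha}(Q,P)\le a\}$ and the sublevel set $\{Q:D_{\alpha}(Q,P)\le a\}$ is closed for the strong topology by lower semicontinuity of $Q\mapsto D_{\alpha}(Q,P)$ (Proposition~\ref{proposition 2}), the set $A_{\mathcal{E}}(a)$ is a closed subset of a compact set, hence compact; this gives the first assertion.

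For the projection on $\mathcal{M}_{\theta_{\mathcal{E}}}$, I would first check that $\mathcal{M}_{\theta_{\mathcal{E}}}=\mathcal{M}_{\theta}\cap\mathcal{E}$ is closed for the strong topology, hence compact as a closed subset of $\mathcal{M}_{\mathcal{E}}$; beyond Theorem~\ref{theoreme 3}, the only thing to see is that the moment constraint passes to strong limits, which is immediate from $\|\int g(x,\theta)(q_{n}(x)-q(x))\,\mathrm{d}x\|\le\lambda(K)\,(\sup_{x\in K}\|g(x,\theta)\|)\,\sup_{x\in K}|q_{n}(x)-q(x)|$ together with (\ref{G2}). Since $\mathcal{M}_{\theta_{\mathcal{E}}}$ is assumed non void, lower semicontinuity of $D_{\alpha}(\cdot,P)$ on this compact set yields a minimizer $Q^{\ast}$; finiteness of its value is a point to be checked, but it follows from the standing hypothesis $\inf_{Q\in\mathcal{M}}D_{\alpha}(Q,P)<\infty$ together with the uniform boundedness of the densities in $E$ on the compact $K$, which reduces finiteness of $D_{\alpha}(Q,P)$ on $\mathcal{M}_{\mathcal{E}}$ to $\int p^{\alpha+1}\,\mathrm{d}\lambda<\infty$.

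For uniqueness, I would use that $\alpha\in(0,1]$ makes $u\mapsto\varphi(u,v)$ strictly convex on $\mathbb{R}^{+}$ for each $v\ge0$: for distinct $Q_{1},Q_{2}\in\mathcal{M}_{\theta_{\mathcal{E}}}$ with finite divergence, the map $t\mapsto D_{\alpha}\bigl((1-t)Q_{1}+tQ_{2},P\bigr)=\int\varphi\bigl((1-t)q_{1}(x)+tq_{2}(x),p(x)\bigr)\,\mathrm{d}x$ is then strictly convex, the integrand being strictly convex in $t$ on the set $\{q_{1}\ne q_{2}\}$ of positive Lebesgue measure; if $Q_{1}$ and $Q_{2}$ were both minimizers, their midpoint would have strictly smaller divergence while still lying in $\mathcal{M}_{\theta_{\mathcal{E}}}$, a contradiction. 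The step I expect to require the most care — and the one not formally covered by (E1) and (E2) as stated — is precisely this last membership: uniqueness needs $\mathcal{M}_{\theta_{\mathcal{E}}}$ to be convex, which, the moment constraint being affine, amounts to convexity of $E$. I would therefore add convexity of $E$ to the standing assumptions on the smoothness class — it holds, for instance, when $E$ is a ball for the Lipschitz-type seminorm underlying (E2) — or argue it directly for the class considered; the remaining ingredients (stability of the constraints under strong limits, lower semicontinuity, strict convexity of $\varphi$) are routine once the earlier results are in hand.
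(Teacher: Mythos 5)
Your proof is correct and follows essentially the same route as the paper: compactness of the level set via Arzela--Ascoli plus closedness from the lower semicontinuity of $Q\mapsto D_{\alpha}(Q,P)$, existence of the projection by minimizing an l.s.c.\ functional on a (nonempty) compact set, and uniqueness from strict convexity of $\varphi$. Your caveat on convexity is well taken and is in fact the one point the paper's own proof leaves implicit: it only asserts uniqueness of the projection on ``any closed convex set $\Omega$ in $\mathcal{M}_{\theta _{\mathcal{E}}}$'' without verifying that $\mathcal{M}_{\theta _{\mathcal{E}}}$ itself is convex, and since the moment constraint is affine this reduces, exactly as you say, to convexity of $E$, which (E1)--(E2) alone do not guarantee and which should be made a standing assumption on the class.
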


\begin{proof}
The set $F$ of functions $q$ in $E$ such that $D_{\alpha }(Q,P)\leq a$ for $%
Q $ with density $q$ is closed in $E$ by Proposition \ref{proposition 2};
now $cl(E)$ is compact by Arzela-Ascoli Theorem; hence $F$ is a compact
subset in $E$.

The mapping $q\rightarrow Q$ from $E$ to $\mathcal{M}_{\mathcal{E}}$ is
injective, whence $A_{\mathcal{E}}(a)$ is compact in $\mathcal{M}_{\mathcal{E%
}}.$ 
Let $a_{\theta }:=\inf_{Q\in \mathcal{M}_{\theta _{\mathcal{E}}}}D_{\alpha
}(Q,P)$ and let $\varepsilon >0.$ Then$\ $ $A_{\mathcal{E}}(a_{\theta
}+\varepsilon )\cap \mathcal{M}_{\theta _{\mathcal{E}}}\neq \emptyset .$

It can be observed that for all $\theta $ the set $\mathcal{M}_{\theta _{%
\mathcal{E}}}$ is a closed set, following the same arguments as in
Proposition \ref{proposition 2}. Since $\mathcal{M}_{\theta _{\mathcal{E}}}$
\ is closed and $A_{\mathcal{E}}(a_{\theta }+\varepsilon )$ is compact, $A_{%
\mathcal{E}}(a_{\theta }+\varepsilon )\cap \mathcal{M}_{\theta _{\mathcal{E}%
}}$ is compact.

Since 
\begin{equation*}
\arg \inf_{Q\in \mathcal{M}_{\theta _{\mathcal{E}}}}D_{\alpha }(Q,P)=\arg
\inf_{Q\in A_{\mathcal{E}}(a_{\theta }+\varepsilon )\cap \mathcal{M}_{\theta
_{\mathcal{E}}}}D_{\alpha }(Q,P),
\end{equation*}%
%
%
%
%
%
%
%
%
%
%
%
%
%
%
%
%
%
%
%
%
%
%
%
%
%
%
%
%
%
%
%
%
%
%
%
%
%
%
%
%
%
%
%
%
%
%
%
existence of the projection follows from the lower semi continuity of $%
Q\rightarrow D_{\alpha }(Q,P).$ Since $\varphi $ is strictly convex the
function $Q\in \mathcal{M}_{K}^{1}(\lambda )\rightarrow D_{\alpha }(Q,P)$ is
also strictly convex, and the projection of $P$ on any closed convex set $%
\Omega $ in $\mathcal{M}_{\theta _{\mathcal{E}}}$\ is uniquely defined.%
\end{proof}

Consider now the $D_{\alpha }$ -projection of $P$ on a convex subset $\Omega 
$ $\ $in $\mathcal{M}_{\mathcal{E}}.$ Making use of Propositions \ref%
{proposition 2} and \ref{proposition 3} it holds

\begin{theorem}
\label{theoreme 6} 
For any closed convex set $\Omega $ in $\mathcal{M}_{\mathcal{E}}$ the $%
D_{\alpha }$ projection of $P$ on $\Omega $ exists and is unique.
\end{theorem}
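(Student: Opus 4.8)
The plan is to deduce Theorem \ref{theoreme 6} from the three propositions already established, by reducing the projection problem on $\Omega$ to a minimization over a compact set and then invoking strict convexity for uniqueness. The statement only differs from Proposition \ref{proposition 3} in that $\mathcal{M}_{\theta_{\mathcal{E}}}$ is replaced by an arbitrary closed convex subset $\Omega$ of $\mathcal{M}_{\mathcal{E}}$, so almost all the work has been done; the task is to isolate the general topological argument.

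First I would treat the trivial case: if $\inf_{Q\in\Omega}D_{\alpha}(Q,P)=+\infty$ there is nothing to prove (the projection is vacuously absent, or one simply restricts attention to the nontrivial situation, consistent with how Definition \ref{Definition 1} phrases things). So assume $a_{\Omega}:=\inf_{Q\in\Omega}D_{\alpha}(Q,P)<\infty$ and fix $\varepsilon>0$ so that $\Omega\cap A_{\mathcal{E}}(a_{\Omega}+\varepsilon)\neq\emptyset$. By Proposition \ref{proposition 3} the level set $A_{\mathcal{E}}(a_{\Omega}+\varepsilon)$ is compact in the strong topology; since $\Omega$ is closed in $\mathcal{M}_{\mathcal{E}}$ by hypothesis, the intersection $\Omega\cap A_{\mathcal{E}}(a_{\Omega}+\varepsilon)$ is a closed subset of a compact set, hence compact. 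Moreover the infimum of $D_{\alpha}(\cdot,P)$ over $\Omega$ is attained on this intersection, i.e.
\begin{equation*}
\arg\inf_{Q\in\Omega}D_{\alpha}(Q,P)=\arg\inf_{Q\in\Omega\cap A_{\mathcal{E}}(a_{\Omega}+\varepsilon)}D_{\alpha}(Q,P).
\end{equation*}
Existence of a minimizer then follows from the lower semicontinuity of $Q\mapsto D_{\alpha}(Q,P)$ for the strong topology (Proposition \ref{proposition 2}): a l.s.c. function on a nonempty compact set attains its infimum.

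For uniqueness I would invoke strict convexity. Since $\varphi$ is strictly convex in its first argument on $\mathbb{R}^{+}$ for $\alpha\in(0,1]$, the functional $Q\mapsto D_{\alpha}(Q,P)=\int\varphi(q,p)\,\mathrm{d}\lambda$ is strictly convex on $\mathcal{M}_{K}^{1}(\lambda)$ (viewing densities, a convex combination of $Q_{1}\neq Q_{2}$ corresponds to a convex combination of distinct densities on a set of positive Lebesgue measure, where $\varphi$ is strictly convex). Hence on the convex set $\Omega$ the minimizer is unique: if $Q_{1}^{*}\neq Q_{2}^{*}$ were two minimizers, the midpoint $\tfrac12(Q_{1}^{*}+Q_{2}^{*})$ would lie in $\Omega$ (convexity) and give a strictly smaller value, a contradiction. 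This is exactly the argument concluding the proof of Proposition \ref{proposition 3}, now applied verbatim with $\Omega$ in place of $\mathcal{M}_{\theta_{\mathcal{E}}}$.

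There is essentially no hard obstacle here, since the heavy lifting—compactness of level sets via Arzel\`a--Ascoli, lower semicontinuity of $D_{\alpha}$, and strict convexity of $\varphi$—is already contained in Propositions \ref{proposition 2} and \ref{proposition 3}. The only point deserving care is the reduction step: one must check that $\Omega$ being closed \emph{in $\mathcal{M}_{\mathcal{E}}$} (for the strong topology) suffices, i.e. that $\Omega\cap A_{\mathcal{E}}(a_{\Omega}+\varepsilon)$ is closed as a subset of the ambient compact space; this is immediate because $A_{\mathcal{E}}(a_{\Omega}+\varepsilon)\subset\mathcal{M}_{\mathcal{E}}$, so the relative closedness of $\Omega$ in $\mathcal{M}_{\mathcal{E}}$ is exactly what is needed. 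I would also remark that convexity of $\Omega$ as a subset of $\mathcal{M}_{\mathcal{E}}$ makes sense because $\mathcal{E}$ (and the constraint sets $\mathcal{M}_{\theta_{\mathcal{E}}}$) are themselves convex, so the hypothesis is non-vacuous; with that in place the proof is complete.
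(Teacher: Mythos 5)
Your proof is correct and follows essentially the same route as the paper's: existence from the compactness of the level set $A_{\mathcal{E}}(a+\varepsilon)$ intersected with the closed set $\Omega$ together with lower semicontinuity, and uniqueness from strict convexity of $\varphi$ on the convex set $\Omega$. You merely spell out the details (the reduction to the finite-infimum case, the midpoint argument) that the paper leaves implicit.
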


\begin{proof}
Indeed let 
\begin{equation*}
a:=\inf_{Q\in \Omega }D_{\alpha }(Q,P)
\end{equation*}%
and $\varepsilon >0.$ Then $A_{\mathcal{E}}(a+\varepsilon )\cap \Omega \neq
\emptyset $ . Since $\Omega $ is closed and $A_{\mathcal{E}}(a+\varepsilon ) 
$ is compact, existence of the projection follows.

\ Uniqueness is due to strict convexity . 
\end{proof}

\section{Minimum pseudo-distance estimator}

\label{Section4} Let $X_{1},...,X_{n}$ denote an i.i.d. sample of a random
vector $X\in \mathbb{R}^{m}$ with distribution $P_{0}$ in $\mathcal{M}%
_{K}^{1}(\lambda )$ . Let $P_{n}(.)$ be the empirical measure pertaining to
this sample, namely 
\begin{equation*}
P_{n}(.):=\frac{1}{n}\sum_{i=1}^{n}\delta _{X_{i}}(.),
\end{equation*}%
where $\delta _{x}(.)$ denotes the Dirac measure at point $x$. We define 
\begin{align*}
D_{\alpha }(\mathcal{M}_{\theta _{\mathcal{E}}},P_{0})& =\inf_{Q\in \mathcal{%
M}_{\theta _{\mathcal{E}}}}D_{\alpha }(Q,P_{0}) \\
& =\inf_{Q\in \mathcal{M}_{\theta _{\mathcal{E}}}}\left\{ \int \left(
q^{\alpha +1}(x)-\left( 1+\frac{1}{\alpha }\right) q^{\alpha }(x)p_{0}(x)+%
\frac{1}{\alpha }p_{0}^{\alpha +1}(x)\right) \mathrm{d}x\right\} .
\end{align*}%
Since optimization only pertains to $Q$ define in the following 
\begin{align*}
R_{\alpha }(\mathcal{M}_{\theta _{\mathcal{E}}},P_{0})& :=\inf_{Q\in 
\mathcal{M}_{\theta _{\mathcal{E}}}}R_{\alpha }(Q,P_{0}) \\
& =\inf_{Q\in \mathcal{M}_{\theta _{\mathcal{E}}}}\left\{ \int \left(
q^{\alpha +1}(x)-\left( 1+\frac{1}{\alpha }\right) q^{\alpha
}(x)p_{0}(x)\right) \mathrm{d}x\right\} ,
\end{align*}%
and the \textquotedblleft plug-in\textquotedblright\ estimate of $R_{\alpha
}(\mathcal{M}_{\theta _{\mathcal{E}}},P_{0})$ through%
\begin{align*}
\widehat{R}_{\alpha }(\mathcal{M}_{\theta _{\mathcal{E}}},P_{0})&
:=\inf_{Q\in \mathcal{M}_{\theta _{\mathcal{E}}}}R_{\alpha }(Q,P_{n}) \\
& =\inf_{Q\in \mathcal{M}_{\theta _{\mathcal{E}}}}\left\{ \int q^{\alpha
+1}(x)\mathrm{d}x-\left( 1+\frac{1}{\alpha }\right) \int q^{\alpha }(x)%
\mathrm{d}P_{n}(x)\right\} \\
& =\inf_{Q\in \mathcal{M}_{\theta _{\mathcal{E}}}}\left\{ \int q^{\alpha
+1}(x)\mathrm{d}x-\left( 1+\frac{1}{\alpha }\right) \frac{1}{n}%
\sum_{i=1}^{n}q^{\alpha }(X_{i})\right\}
\end{align*}%
In the same way,%
\begin{align*}
R_{\alpha }(\mathcal{M},P_{0})& :=\inf_{\theta \in \Theta }\inf_{Q\in 
\mathcal{M}_{\theta _{\mathcal{E}}}}R_{\alpha }(Q,P_{0}) \\
& =\inf_{\theta \in \Theta }\inf_{Q\in \mathcal{M}_{\theta _{\mathcal{E}%
}}}\left\{ \int q^{\alpha +1}(x)\mathrm{d}x-\left( 1+\frac{1}{\alpha }%
\right) \int q^{\alpha }(x)\mathrm{d}P_{0}(x)\right\}
\end{align*}%
can be estimated by 
\begin{equation*}
\widehat{R}_{\alpha }(\mathcal{M},P_{0}):=\inf_{\theta \in \Theta
}\inf_{Q\in \mathcal{M}_{\theta _{\mathcal{E}}}}\left\{ \int q^{\alpha +1}(x)%
\mathrm{d}x-\left( 1+\frac{1}{\alpha }\right) \frac{1}{n}\sum_{i=1}^{n}q^{%
\alpha }(X_{i})\right\}
\end{equation*}%
Since 
\begin{equation*}
\arg \inf_{Q\in \mathcal{M}_{\theta _{\mathcal{E}}}}D_{\alpha
}(Q,P_{0})=\arg \inf_{Q\in \mathcal{M}_{\theta _{\mathcal{E}}}}R_{\alpha
}(Q,P_{0})
\end{equation*}%
for any $\theta $ 
\begin{equation*}
\arg \inf_{Q\in \mathcal{M}_{\theta _{\mathcal{E}}}}R_{\alpha }(\mathcal{Q}%
,P_{0})
\end{equation*}%
exists and is unique (whether $P_{0}\in \cup \mathcal{M}_{\theta _{\mathcal{E%
}}}$ or not).\newline
We will consider estimators of $\theta _{0}$ where $P_{0}=P_{\theta _{0}}$
for some $\theta _{0}\in \Theta $ ; this corresponds to the fact that $%
P_{0}\in \mathcal{M}$. In this case by uniqueness of $\arg \inf_{\theta \in
\Theta }R_{\alpha }(\mathcal{M}_{\theta _{\mathcal{E}}},P_{0})$ and since
the infimum is reached at $\theta =\theta _{0}$ under the model, $\theta
_{0} $ is estimated through 
\begin{equation*}
\widehat{\theta }_{n}:=\arg \inf_{\theta \in \Theta }\inf_{Q\in \mathcal{M}%
_{\theta _{\mathcal{E}}}}\left\{ \int q^{\alpha +1}(x)\mathrm{d}x-\left( 1+%
\frac{1}{\alpha }\right) \frac{1}{n}\sum_{i=1}^{n}q^{\alpha }(X_{i})\right\}
.
\end{equation*}

\section{Asymptotic properties}

\subsection{Consistency}

\label{Section5} The pseudodistances $D_{\alpha }$ will be applied in the
standard statistical estimation model with i.i.d observations $%
X_{1},...,X_{n}$ governed by $P_{0}$ from a family $\mathcal{P}=\{P_{\theta
}:\theta \in \Theta \}$ $\subset \mathcal{M}_{K}^{1}(\lambda )$ of
probability measures on $(\mathbb{R}^{k},\mathcal{B}\left( \mathbb{R}%
^{k}\right) )$ indexed by a a set of parameters $\Theta \subset \mathbb{R}%
^{d}$ .

\begin{remark}
If $P_{0}\in $\textrm{$\mathcal{M}$} there exists an unique $P_{\theta
_{0}}\in \mathcal{M}$ such that $P_{0}=P_{\theta _{0}}\in \mathcal{M}$ ;
then by identifiability \textrm{%
\begin{equation*}
\arg \inf_{\theta }D_{\alpha }(P_{\theta },P_{\theta _{0}})=\theta _{0}.
\end{equation*}%
}
\end{remark}

In other words the unknown parameter $\theta _{0}$ is the unique minimizer
of the function $D_{\alpha }(P_{\theta },P_{0})$ 
\begin{equation}
\theta _{0}=\arg \min_{\theta }D_{\alpha }(P_{\theta },P_{\theta _{0}})\in
\Theta .  \label{equation1}
\end{equation}%
The empirical probability measures $P_{n}$ converge weakly a.s. to $P_{0}$
as $n\longrightarrow \infty $ . Therefore by plugging in (\ref{equation1})
the measures $P_{n}$ for $P_{0}$ one intuitively expects to obtain that the
estimator under the form 
\begin{equation*}
\arg \min_{\theta \in \Theta }M_{n}(P_{\theta },P_{n})
\end{equation*}%
converges to $\theta _{0}$ as $n\rightarrow \infty ,$ where $M_{n}(P_{\theta
},P_{n})$ is some empirical criterion which estimates the objective function 
$R_{\alpha }(P_{\theta },P_{0}).$\newline
We will repeatedly make use of a basic result which we recall for
convenience.\newline
Denote $M_{n}(\tau )$ a family of random functions of a parameter $\tau $
which belongs to a space $T$ endowed which a metric denoted $d$ .\newline
Assuming that the sequences $M_{n}$ converges uniformly to some
deterministic function $M$ defined on $T$, then the following result
provides a set of sufficient conditions which entail the weak convergence of
minimizers of $M_{n}$ to the minimizer of $M$ , if well defined.

\begin{lemma}
\label{lemme 3} 
(\cite{VanderVaart 1998}, theorem 5.7) Assume that

(1)$\sup_{\tau \in T}|M_{n}(\tau )-M(\tau )|{\overset{P}{\longrightarrow }}%
0, $

(2)For any $\epsilon >0,\inf_{\{t\in T,d(t,t_{0})\geq \epsilon
\}}M(t)>M(t_{0})$,

(3) the sequence $t_{n}$ satisfies 
\begin{equation*}
M_{n}(t_{n})\leq M_{n}(t_{0})+\circ _{p}(1)
\end{equation*}%
Then the sequence $t_{n}$ satisfies
\end{lemma}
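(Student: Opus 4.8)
The plan is to prove the standard argmin-consistency statement (van der Vaart, Theorem 5.7), which is the concluding lemma cut off mid-sentence. The conclusion to be established is that the (near-)minimizers $t_n$ converge in probability to $t_0$, i.e. $d(t_n,t_0)\overset{P}{\longrightarrow}0$. First I would fix $\varepsilon>0$ and invoke hypothesis (2): set $\eta:=\inf_{\{t:d(t,t_0)\ge\varepsilon\}}M(t)-M(t_0)>0$. The event $\{d(t_n,t_0)\ge\varepsilon\}$ forces $M(t_n)\ge M(t_0)+\eta$, so it suffices to show $\mathbb{P}\big(M(t_n)\ge M(t_0)+\eta\big)\to 0$.

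Next I would run the three-term chaining inequality. On one hand, by hypothesis (3), $M_n(t_n)\le M_n(t_0)+o_p(1)$. On the other hand, write
\begin{equation*}
M(t_n)-M(t_0)=\big(M(t_n)-M_n(t_n)\big)+\big(M_n(t_n)-M_n(t_0)\big)+\big(M_n(t_0)-M(t_0)\big).
\end{equation*}
The first and third brackets are each bounded in absolute value by $\sup_{\tau\in T}|M_n(\tau)-M(\tau)|$, which is $o_p(1)$ by hypothesis (1); the middle bracket is $\le o_p(1)$ by hypothesis (3). Hence $M(t_n)-M(t_0)\le o_p(1)$, so $\mathbb{P}\big(M(t_n)-M(t_0)\ge\eta\big)\to 0$, and therefore $\mathbb{P}\big(d(t_n,t_0)\ge\varepsilon\big)\to 0$. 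Since $\varepsilon$ was arbitrary, $t_n\overset{P}{\longrightarrow}t_0$.

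There is essentially no obstacle here: the argument is a short sandwiching that only uses the triangle inequality for $|\cdot|$ on $\mathbb{R}$ together with the definition of convergence in probability, and the well-definedness of $t_0$ as the unique minimizer is guaranteed by (2). The only point demanding a word of care is that $t_n$ need not be an exact minimizer — it is a \emph{near}-minimizer in the sense of (3) — which is exactly why the $o_p(1)$ slack is carried through the middle term; this is harmless because it is absorbed into the same $o_p(1)$ bound. I would also remark, for the application to $\widehat\theta_n$ in the semiparametric setting, that the exact minimizer trivially satisfies (3), so the lemma applies verbatim once (1) (a uniform law of large numbers over the class $E$, which is where conditions (E1)--(E2) and Arzelà--Ascoli enter) and (2) (well-separatedness, which is condition (M2)) have been verified.
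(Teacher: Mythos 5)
Your argument is correct and is exactly the standard proof of van der Vaart's Theorem 5.7: the three-term decomposition of $M(t_n)-M(t_0)$, with the outer brackets controlled by the uniform convergence (1), the middle by the near-minimizer slack (3), and the well-separatedness (2) converting $M(t_n)-M(t_0)\le o_p(1)$ into $d(t_n,t_0)\overset{P}{\longrightarrow}0$. The paper itself supplies no proof for this lemma --- it is quoted directly from the cited reference --- so your write-up simply fills in the textbook argument faithfully, including the correct observation that an exact minimizer satisfies (3) trivially.
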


\begin{equation*}
d(t_{n},t_{0}){\overset{P}{\longrightarrow }}0.
\end{equation*}%
Lemma \ref{lemme 3} will be used according to the context of minimization at
hand. \newline
By (\ref{def estim en 2 etapes}) we consider the inner and the outer
minimization problems leading to the estimator. This will be performed in
two steps: the inner minimization with respect to $Q$ in $\mathcal{M}%
_{\theta _{\mathcal{E}}}$ for fixed $\theta $, and the outer minimization
w.r.t $\theta .$\newline
Here we establish the consistency of the minimum pseudodistance estimator on
the closed set of measures a.c w.r.t $\lambda $ .

\subsubsection{Inner minimization: convergence of the projection of $P_{n}$%
\protect\bigskip on $\mathcal{M}_{\protect\theta _{\mathcal{E}}}$}

\noindent Fix $\theta \in \Theta .$ Denote%
\begin{equation*}
M_{n}(Q):=R_{\alpha }(Q,P_{n})
\end{equation*}%
where $Q\in \mathcal{M}_{\theta _{\mathcal{E}}}.$ \newline
\newline
Denote%
\begin{equation}
Q_{n}(\theta ):=\arg \inf_{Q\in \mathcal{M}_{\theta _{\mathcal{E}%
}}}R_{\alpha }(Q,P_{n}).  \label{q_n(teta)}
\end{equation}%
Existence and uniqueness of a p.m $Q_{n}(\theta )$ with density $%
q_{n}(\theta )$ follows from same arguments as in Proposition \ref%
{proposition 3}, substituting $P$ by $P_{n}$ . \newline
Denote accordingly the unique minimizer of $R_{\alpha }(Q,P_{0})$ on $%
\mathcal{M}_{\theta _{\mathcal{E}}}$, 
\begin{equation}
q_{\theta }^{\ast }:=\frac{dQ_{\theta }^{\ast }}{dP_{0}}\text{ where }%
Q_{\theta }^{\ast }:=\arg \inf_{Q\in \mathcal{M}_{\theta _{\mathcal{E}%
}}}R_{\alpha }(Q,P_{0}).  \label{q_teta*}
\end{equation}%
\newline
We prove that $q_{n}(\theta )$ converges to $q_{\theta }^{\ast }$ making use
of Lemma \ref{lemme 3}.\newline
Setting 
\begin{equation*}
M_{n}(\tau ):=R_{\alpha }(Q,P_{n}),
\end{equation*}%
with $\tau =\frac{dQ}{d\lambda }$, setting $d(\tau ,\tau ^{\prime
})=\sup_{x\in K}|q(x)-q^{\prime }(x)|$, it holds

\begin{lemma}
Fix $\theta .$\ Then Condition (1) in Lemma \ref{lemme 3} holds, namely%
\begin{equation*}
\sup_{Q\in \mathcal{M}_{\theta _{\mathcal{E}}}}\left\vert R_{\alpha
}(Q,P_{n})-R_{\alpha }(Q,P_{0})\right\vert \rightarrow 0\text{ in
probability.}
\end{equation*}
\end{lemma}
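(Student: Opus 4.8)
The plan is to reduce the claimed uniform convergence to a uniform law of large numbers over the class of functions $\{x \mapsto q^{\alpha}(x) : q \in E\}$. Recall that for $Q \in \mathcal{M}_{\theta_{\mathcal{E}}}$ we have $R_{\alpha}(Q,P) = \int q^{\alpha+1}\,\mathrm{d}\lambda - (1+\tfrac{1}{\alpha})\int q^{\alpha}\,\mathrm{d}P$, and the first term $\int q^{\alpha+1}\,\mathrm{d}\lambda$ does not involve $P$ at all. Hence
\begin{equation*}
R_{\alpha}(Q,P_{n}) - R_{\alpha}(Q,P_{0}) = -\left(1+\frac{1}{\alpha}\right)\left( \int q^{\alpha}\,\mathrm{d}P_{n} - \int q^{\alpha}\,\mathrm{d}P_{0}\right),
\end{equation*}
so that
\begin{equation*}
\sup_{Q\in\mathcal{M}_{\theta_{\mathcal{E}}}}\left| R_{\alpha}(Q,P_{n}) - R_{\alpha}(Q,P_{0})\right| \leq \left(1+\frac{1}{\alpha}\right)\sup_{q\in E}\left| \frac{1}{n}\sum_{i=1}^{n} q^{\alpha}(X_i) - \mathbb{E}_{P_0}[q^{\alpha}(X_1)]\right|.
\end{equation*}
Thus it suffices to show the right-hand side tends to $0$ in probability, i.e. that $\mathcal{F} := \{q^{\alpha} : q \in E\}$ is a (strong) Glivenko--Cantelli class.

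First I would verify the hypotheses that make $\mathcal{F}$ Glivenko--Cantelli. By condition (E1) the family $E$ is uniformly bounded, hence $\mathcal{F}$ is uniformly bounded on $K$ (so it has an integrable envelope, as all measures live on the compact $K$). By condition (E2) the maps $q^{\alpha}$ are Lipschitz on $K$ with a common constant $M$, so $\mathcal{F}$ is uniformly equicontinuous; combined with the uniform bound, Arzel\`a--Ascoli shows $\mathcal{F}$ is totally bounded in the sup norm on the compact $K$. A totally bounded, uniformly bounded class of continuous functions on a compact set is Glivenko--Cantelli: one covers $\mathcal{F}$ by finitely many sup-norm balls of radius $\eta$, applies the ordinary (scalar) law of large numbers to the finitely many centers, and controls the oscillation within each ball by $\eta$ uniformly in $n$ and $\omega$. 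Letting $\eta \to 0$ gives $\sup_{q\in E}\big|\tfrac1n\sum_i q^{\alpha}(X_i) - \mathbb{E}_{P_0} q^{\alpha}\big| \to 0$ almost surely, hence in probability. (Alternatively one invokes the bracketing-entropy criterion, Theorem~19.4 in \cite{VanderVaart 1998}: finitely many brackets of arbitrarily small $L^1(P_0)$-size are built directly from a finite sup-norm net, since $|q^{\alpha}(x) - q'^{\alpha}(x)| \le \eta$ pointwise bounds the bracket size by $\eta$.)

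Assembling the two displays above then yields the claimed convergence in probability, uniformly over $Q \in \mathcal{M}_{\theta_{\mathcal{E}}}$, with a bound that is in fact independent of $\theta$. I expect \textbf{the main obstacle} to be purely a matter of bookkeeping in the second part: confirming that conditions (E1) and (E2) are exactly what is needed to feed into a Glivenko--Cantelli theorem for $\{q^{\alpha}\}$, in particular checking that the envelope is $P_0$-integrable (immediate from compact support plus (E1)) and that the entropy/covering estimate transfers from the sup-norm to $L^1(P_0)$ (immediate since $P_0$ is a probability measure, so $L^\infty$ balls are contained in $L^1(P_0)$ balls of the same radius). There is no real analytic difficulty; the only point requiring care is that all constants — the Lipschitz constant $M$ from (E2), the uniform bound from (E1) — are genuinely independent of $\theta$, which is what lets this inner-minimization step later be combined with the outer minimization over $\theta$.
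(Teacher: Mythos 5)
Your proof is correct and follows essentially the same route as the paper: the same reduction, via the decomposability of $R_{\alpha}$, to the uniform law of large numbers for the class $\{q^{\alpha}:q\in E\}$, which is then shown to be Glivenko--Cantelli from (E1) and (E2). The only difference is that you establish the Glivenko--Cantelli property directly by an Arzel\`a--Ascoli sup-norm covering (or bracketing) argument, whereas the paper cites Corollary 2.7.2 of van der Vaart and Wellner to get the stronger Donsker property and deduces Glivenko--Cantelli from it; both are valid.
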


\begin{proof}
It holds%
\begin{equation*}
\sup_{Q\in \mathcal{M}_{\theta _{\mathcal{E}}}}|R_{\alpha
}(Q,P_{n})-R_{\alpha }(Q,P_{0})|\leq \left( 1+\frac{1}{\alpha }\right)
\sup_{Q\in \mathcal{M}_{\theta _{\mathcal{E}}}}\left\vert \frac{1}{n}%
\sum_{i=1}^{n}q^{\alpha }(X_{i})-E_{P_{0}}(q^{\alpha }(X))\right\vert
\end{equation*}%
which tends to $0$ almost surely as $n$ tends to infinity; indeed we may use
the arguments developed in \cite{Van der Vaart Wellner} pp 154-157, making
use of condition (\ref{Lipschitz unif pour q alfa}) , which implies, in its
notation, that the class of function $x\rightarrow q^{\alpha }$ belongs to $%
C_{M}^{\alpha }(K)$; therefore the class $E$ is Donsker, by Corollary 2.7.2.
in \cite{Van der Vaart Wellner}, henceforth is a Glivenko Cantelli class.
\end{proof}

We prove in the Appendix that the second condition in Lemma \ref{lemme 3}
holds

\begin{lemma}
\label{Lemma Cond 2 VdW} 
For any $\varepsilon >0$, 
\begin{equation*}
\inf_{\{Q:\Vert q-q_{\theta }^{\ast }\Vert >\epsilon ,Q\in \mathcal{M}%
_{\theta _{\mathcal{E}}}\}}R_{\alpha }(Q,P_{0})>R_{\alpha }(Q_{\theta
}^{\ast },P_{0}).\newline
\end{equation*}%
where $dQ/dP=q$ and d$Q_{\theta }^{\ast }/dP=q_{\theta }^{\ast }.$
\end{lemma}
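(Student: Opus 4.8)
The statement is Condition (2) of Lemma \ref{lemme 3} applied to the inner minimization: the map $Q \mapsto R_\alpha(Q,P_0)$ on $\mathcal{M}_{\theta_\mathcal{E}}$ is "well separated" at its unique minimizer $Q^\ast_\theta$. The plan is to argue by contradiction using the compactness already established in Proposition \ref{proposition 3} together with the lower semicontinuity from Proposition \ref{proposition 2}. Suppose the conclusion fails: then there is some $\varepsilon>0$ and a sequence $(Q_k)\subset \mathcal{M}_{\theta_\mathcal{E}}$ with $\|q_k - q^\ast_\theta\| > \varepsilon$ for all $k$, yet $R_\alpha(Q_k,P_0) \to \inf_{\{Q:\|q-q^\ast_\theta\|>\varepsilon\}} R_\alpha(Q,P_0) =: \ell$, and we may assume $R_\alpha(Q_k,P_0) \le \ell + 1$ for all $k$.

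First I would confine the sequence to a compact set: since $R_\alpha(Q_k,P_0) \le \ell+1$ for all $k$, the whole sequence lies in the level set $A_\mathcal{E}(\ell+1) \cap \mathcal{M}_{\theta_\mathcal{E}}$, which by Proposition \ref{proposition 3} is compact in the strong topology. Hence along a subsequence $q_{k_j} \to \bar q$ strongly for some $\bar q \in E$, and by the closedness of $\mathcal{M}_{\theta_\mathcal{E}}$ (established as in Proposition \ref{proposition 2}) the limiting measure $\bar Q$ belongs to $\mathcal{M}_{\theta_\mathcal{E}}$. The strong convergence $\|q_{k_j} - \bar q\| \to 0$ together with $\|q_{k_j} - q^\ast_\theta\| > \varepsilon$ forces $\|\bar q - q^\ast_\theta\| \ge \varepsilon > 0$, so $\bar Q \ne Q^\ast_\theta$.

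Next, lower semicontinuity of $Q \mapsto D_\alpha(Q,P_0)$ (Proposition \ref{proposition 2}) — equivalently of $Q \mapsto R_\alpha(Q,P_0)$, since the two differ only by the additive constant $\mathfrak{D}^1(P_0) = \tfrac{1}{\alpha}\int p_0^{\alpha+1}\,d\lambda$, which is finite because $\inf_{Q\in\mathcal{M}} D_\alpha(Q,P_0)<\infty$ — gives $R_\alpha(\bar Q, P_0) \le \liminf_j R_\alpha(Q_{k_j}, P_0) = \ell$. But $\bar Q \in \mathcal{M}_{\theta_\mathcal{E}}$ with $\bar Q \ne Q^\ast_\theta$, and $Q^\ast_\theta$ is the \emph{unique} minimizer of $R_\alpha(\cdot,P_0)$ over $\mathcal{M}_{\theta_\mathcal{E}}$ (Proposition \ref{proposition 3}, using strict convexity of $\varphi$), so $R_\alpha(\bar Q,P_0) > R_\alpha(Q^\ast_\theta, P_0)$. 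Combining, $\ell \ge R_\alpha(\bar Q,P_0) > R_\alpha(Q^\ast_\theta,P_0)$, and since trivially $\ell \ge R_\alpha(Q^\ast_\theta,P_0)$ with the strict inequality just obtained we conclude $\inf_{\{Q:\|q-q^\ast_\theta\|>\varepsilon,\,Q\in\mathcal{M}_{\theta_\mathcal{E}}\}} R_\alpha(Q,P_0) = \ell > R_\alpha(Q^\ast_\theta,P_0)$, which is exactly the claim.

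**Main obstacle.** The only delicate point is ensuring that the level set used for the compactness step is genuinely compact and that $\mathcal{M}_{\theta_\mathcal{E}}$ is strongly closed — both are asserted in the proof of Proposition \ref{proposition 3}, so I would simply invoke them. A minor subtlety worth a line: when $\mathfrak{D}^1(P_0)=+\infty$ the equivalence "$D_\alpha$ l.s.c. $\iff$ $R_\alpha$ l.s.c." could be questioned, but under the standing hypothesis that $\inf_{Q\in\mathcal{M}}D_\alpha(Q,P_0)$ is finite one has $\int p_0^{\alpha+1}\,d\lambda<\infty$, so $R_\alpha$ and $D_\alpha$ differ by a finite constant on $\mathcal{M}_\mathcal{E}$ and inherit lower semicontinuity from one another; I would state this explicitly to keep the argument clean.
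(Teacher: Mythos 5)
Your argument is correct and rests on the same underlying mechanism as the paper's proof, namely the uniqueness of the projection $Q_{\theta }^{\ast }$ from Proposition \ref{proposition 3}: a second point of $\mathcal{M}_{\theta _{\mathcal{E}}}$ attaining the minimal value cannot exist. The difference lies in how the two arguments produce that second point. The paper adds the constant $\frac{1}{\alpha }\int p_{0}^{\alpha +1}\mathrm{d}\lambda $ to pass from $R_{\alpha }$ to $D_{\alpha }$ and then asserts that, were the two infima equal, there would exist some $Q\neq Q_{\theta }^{\ast }$ with $D_{\alpha }(Q,P_{0})=D_{\alpha }(Q_{\theta }^{\ast },P_{0})$; this tacitly assumes that the infimum over $\{Q:\Vert q-q_{\theta }^{\ast }\Vert >\varepsilon \}$ is attained, which is precisely the nontrivial point. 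You supply that missing step explicitly: a minimizing sequence lies in a level set $A_{\mathcal{E}}(\ell +1)$, compact by Proposition \ref{proposition 3}, hence has a strong limit $\bar{Q}$ which belongs to $\mathcal{M}_{\theta _{\mathcal{E}}}$ by closedness (Theorem \ref{theoreme 3}) and satisfies $\Vert \bar{q}-q_{\theta }^{\ast }\Vert \geq \varepsilon $ (correctly noting that only the non-strict inequality survives the limit, which is all that is needed), and lower semicontinuity (Proposition \ref{proposition 2}) gives $R_{\alpha }(\bar{Q},P_{0})\leq \ell $, whence the strict inequality by uniqueness. Your version is therefore the more complete of the two; the paper's buys brevity at the cost of leaving the attainment step implicit. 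Your closing remark that the finiteness of $\int p_{0}^{\alpha +1}\mathrm{d}\lambda $ (guaranteed by the standing hypothesis at the start of Section \ref{Section3}) is what lets $R_{\alpha }$ inherit lower semicontinuity and level-set compactness from $D_{\alpha }$ is also apt and worth stating, since the level sets in Proposition \ref{proposition 3} are defined through $D_{\alpha }$.
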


We also state that the third condition in Lemma \ref{lemme 3} holds.

\begin{lemma}
\label{Lemma cond 3 VdW} 
\begin{equation*}
R_{\alpha }(Q_{n}(\theta ),P_{n})\leq R_{\alpha }(Q_{\theta }^{\ast
},P_{0})+o_{p}(1).
\end{equation*}
\end{lemma}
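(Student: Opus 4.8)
The plan is to establish Lemma \ref{Lemma cond 3 VdW} by a direct chain of comparisons, using $Q_\theta^\ast$ itself as a feasible reference point and the Glivenko--Cantelli control already obtained for the inner minimization. Since $Q_n(\theta)$ is, by definition \eqref{q_n(teta)}, the minimizer of $R_\alpha(\cdot,P_n)$ over $\mathcal{M}_{\theta_{\mathcal{E}}}$, and $Q_\theta^\ast$ lies in $\mathcal{M}_{\theta_{\mathcal{E}}}$, we have immediately
\begin{equation*}
R_\alpha(Q_n(\theta),P_n)\leq R_\alpha(Q_\theta^\ast,P_n).
\end{equation*}
Thus it suffices to show $R_\alpha(Q_\theta^\ast,P_n)=R_\alpha(Q_\theta^\ast,P_0)+o_p(1)$, i.e.\ that the plug-in error is negligible at the fixed (non-random) density $q_\theta^\ast$.

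First I would write out the difference using the decomposable form $R_\alpha(Q,P)=\mathfrak{D}^0(Q)+\int\rho_q\,dP$, so that the term $\mathfrak{D}^0(Q_\theta^\ast)=\int (q_\theta^\ast)^{\alpha+1}\,d\lambda$ cancels and only
\begin{equation*}
R_\alpha(Q_\theta^\ast,P_n)-R_\alpha(Q_\theta^\ast,P_0)=-\Bigl(1+\tfrac1\alpha\Bigr)\Bigl(\tfrac1n\sum_{i=1}^n (q_\theta^\ast)^\alpha(X_i)-E_{P_0}\bigl[(q_\theta^\ast)^\alpha(X)\bigr]\Bigr)
\end{equation*}
remains. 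Since $q_\theta^\ast\in E$, condition \eqref{Lipschitz unif pour q alfa} guarantees that $x\mapsto (q_\theta^\ast)^\alpha(x)$ is a bounded (by (E1)) and Lipschitz, hence continuous and $P_0$-integrable, function on the compact set $K$; therefore the ordinary law of large numbers applies and the bracketed quantity tends to $0$ almost surely, a fortiori in probability. Combining the two displays yields
\begin{equation*}
R_\alpha(Q_n(\theta),P_n)\leq R_\alpha(Q_\theta^\ast,P_n)=R_\alpha(Q_\theta^\ast,P_0)+o_p(1),
\end{equation*}
which is exactly the claimed inequality with $t_0$ in Lemma \ref{lemme 3} taken to be $q_\theta^\ast$.

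There is essentially no obstacle here: this is the routine ``condition (3)'' verification in the Van der Vaart scheme, and it is strictly easier than the uniform control already proved, since we only need pointwise (in $Q$) convergence at the single deterministic argument $q_\theta^\ast$. The only point to be slightly careful about is making sure that $Q_\theta^\ast$ is genuinely feasible for the $P_n$-problem (it is, because $\mathcal{M}_{\theta_{\mathcal{E}}}$ does not depend on the data) and that $(q_\theta^\ast)^\alpha$ is $P_0$-integrable, which follows from boundedness on $K$. One could alternatively invoke the stronger uniform statement (the Glivenko--Cantelli property of $E$ established in the preceding lemma) and simply evaluate the uniform bound at $Q=Q_\theta^\ast$, but the elementary LLN argument above is self-contained and suffices.
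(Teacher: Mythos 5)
Your proposal is correct and follows exactly the paper's route: the optimality of $Q_n(\theta)$ over $\mathcal{M}_{\theta_{\mathcal{E}}}$ gives $R_\alpha(Q_n(\theta),P_n)\leq R_\alpha(Q_\theta^\ast,P_n)$, and the law of large numbers at the single fixed function $(q_\theta^\ast)^\alpha$ handles the plug-in error. In fact your write-up is more explicit than the paper's one-sentence proof, which states only the first inequality and leaves the convergence $R_\alpha(Q_\theta^\ast,P_n)\to R_\alpha(Q_\theta^\ast,P_0)$ implicit.
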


This follows from the very definition of $Q_{n}(\theta )$ for which $%
R_{\alpha }(Q_{n}(\theta ),P_{n})\leq R_{\alpha }(Q,P_{n})$ for all $Q\in 
\mathcal{M}_{\theta _{\mathcal{E}}}.$

Making use Lemma \ref{lemme 3} we have proved

\begin{theorem}
\label{theoreme 10} 
For any $\theta \in \Theta $,it holds, with $q_{n}(\theta )$ defined in (\ref%
{q_n(teta)}) and $q_{\theta }^{\ast }$ defined in (\ref{q_teta*}) 
\begin{equation*}
\sup_{x\in K}|q_{n}(\theta )(x)-q_{\theta }^{\ast }(x)|{\overset{P}{%
\longrightarrow }}0.
\end{equation*}
\end{theorem}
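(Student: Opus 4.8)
The plan is to deduce the statement directly from the van der Vaart consistency lemma (Lemma~\ref{lemme 3}), applied to the inner minimization problem at the fixed value $\theta$. I would take the metric space $T$ to be $\mathcal{M}_{\theta_{\mathcal{E}}}$, identified through the injection $q\mapsto Q$ with the corresponding class of densities inside $E$, and equip it with $d(q,q')=\sup_{x\in K}|q(x)-q'(x)|$; the random criterion is $M_n(Q):=R_\alpha(Q,P_n)$, the deterministic target is $M(Q):=R_\alpha(Q,P_0)$, the sequence of approximate minimizers is $t_n:=Q_n(\theta)$ from (\ref{q_n(teta)}), and the limit point is $t_0:=Q_\theta^\ast$ from (\ref{q_teta*}). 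Both $t_0$ and each $t_n$ exist and are uniquely defined by Proposition~\ref{proposition 3} (applied with $P=P_0$ and with $P=P_n$ respectively), and $t_0$ lies in the compact set $\mathcal{M}_{\theta_{\mathcal{E}}}$.

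It then remains to check the three hypotheses of Lemma~\ref{lemme 3}, and each of them is one of the three lemmas recorded just above the statement. Hypothesis (1), $\sup_{Q\in\mathcal{M}_{\theta_{\mathcal{E}}}}|R_\alpha(Q,P_n)-R_\alpha(Q,P_0)|\overset{P}{\longrightarrow}0$, follows by noting that the $\mathfrak{D}^{0}(Q)=\int q^{\alpha+1}\mathrm{d}\lambda$ part of $R_\alpha$ does not involve the second argument and cancels, so the difference equals $-(1+\tfrac1\alpha)\bigl(\tfrac1n\sum_{i=1}^n q^\alpha(X_i)-E_{P_0}q^\alpha(X)\bigr)$; condition (\ref{Lipschitz unif pour q alfa}) places the class $\{x\mapsto q^\alpha(x):q\in E\}$ in a uniformly bounded Hölder ball $C_M^\alpha(K)$, whose bracketing integral is finite (Corollary~2.7.2 in \cite{Van der Vaart Wellner}), hence the class is Donsker and in particular Glivenko--Cantelli, which gives the uniform convergence. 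Hypothesis (2) is Lemma~\ref{Lemma Cond 2 VdW}, the well-separation of the minimum of $R_\alpha(\cdot,P_0)$ at $q_\theta^\ast$, which rests on strict convexity of $Q\mapsto D_\alpha(Q,P_0)$ (equivalently of $R_\alpha(\cdot,P_0)$), its lower semicontinuity (Proposition~\ref{proposition 2}), and the compactness of the closed set $\{Q\in\mathcal{M}_{\theta_{\mathcal{E}}}:\|q-q_\theta^\ast\|\ge\varepsilon\}$, on which the infimum is therefore attained and strictly exceeds the value at the unique minimizer. Hypothesis (3) is Lemma~\ref{Lemma cond 3 VdW}: since $Q_n(\theta)$ minimizes $R_\alpha(\cdot,P_n)$ over $\mathcal{M}_{\theta_{\mathcal{E}}}$ and $Q_\theta^\ast\in\mathcal{M}_{\theta_{\mathcal{E}}}$, we have $R_\alpha(Q_n(\theta),P_n)\le R_\alpha(Q_\theta^\ast,P_n)$, and hypothesis (1) turns the right-hand side into $R_\alpha(Q_\theta^\ast,P_0)+o_p(1)$.

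With (1)--(3) in hand, Lemma~\ref{lemme 3} gives $d(t_n,t_0)\overset{P}{\longrightarrow}0$, that is $\sup_{x\in K}|q_n(\theta)(x)-q_\theta^\ast(x)|\overset{P}{\longrightarrow}0$, which is the assertion. The step carrying the real content is hypothesis (1): establishing a uniform law of large numbers over the entire infinite-dimensional class of admissible densities, which is precisely the reason the Lipschitz-type regularity (\ref{Lipschitz unif pour q alfa}) and the Arzela--Ascoli pre-compactness of $E$ were built into the model; the remaining two hypotheses are essentially bookkeeping around the existence, uniqueness and lower semicontinuity results already established for $D_\alpha$-projections.
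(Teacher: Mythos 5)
Your proposal is correct and follows essentially the same route as the paper: Theorem \ref{theoreme 10} is obtained there exactly by applying Lemma \ref{lemme 3} to the inner minimization with the sup-norm metric on densities, with the three hypotheses supplied by the Glivenko--Cantelli lemma (via (E2) and Corollary 2.7.2 of van der Vaart and Wellner), Lemma \ref{Lemma Cond 2 VdW}, and Lemma \ref{Lemma cond 3 VdW}. Your treatment of the well-separation condition is in fact slightly more careful than the paper's, since you note explicitly that compactness of the set $\{Q:\|q-q_{\theta}^{\ast}\|\geq\varepsilon\}$ is what guarantees the infimum is attained before uniqueness of the projection can be invoked.
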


\subsubsection{Outer minimization}

We now consider the minimization in $\theta $ , with the following notation
. Let 
\begin{equation*}
\hat{\theta}_{n}:=\arg \inf_{\theta }\inf_{Q\in \mathcal{M}_{\theta _{%
\mathcal{E}}}}R_{\alpha }(Q,P_{n})=\arg \inf_{\theta }R_{\alpha
}(Q_{n}(\theta ),P_{n})
\end{equation*}%
and 
\begin{equation*}
{\theta }_{0}:=\arg \inf_{\theta }\inf_{Q\in \mathcal{M}_{\theta _{\mathcal{E%
}}}}R_{\alpha }(Q,P_{0})=\arg \inf_{\theta }R_{\alpha }(Q_{\theta }^{\ast
},P_{0}).
\end{equation*}%
The parameter \ $\theta _{0}$ such that $P_{0}=P_{\theta _{0}\text{ }}$is
defined in a unique way by the above display; indeed firstly note that ${%
\theta }_{0}$ is well defined, either when $P_{0}\in \mathcal{M}$ (i.e. $%
P_{0}=P_{\theta _{0}}$) (see Theorem \ref{theoreme 1}) or $P_{0}\notin 
\mathcal{M}$, in which case $P_{\theta _{0}}$ is the $D_{\alpha }-$%
projection of $P_{0}$ on $\mathcal{M}_{\mathcal{E}}.$ \newline
By Theorem \ref{theoreme 10}, we have proved that 
\begin{equation*}
\sup_{x\in K}|q_{n}(\theta )(x)-q_{\theta }^{\ast }(x)|{\overset{P}{%
\longrightarrow }}0.
\end{equation*}

where $q_{\theta }^{\ast }$ is defined in (\ref{q_teta*}). We want to show
that%
\begin{equation*}
\arg \inf_{\theta }R_{\alpha }(Q_{n}(\theta ),P_{n}){\overset{P}{%
\longrightarrow }}\arg \inf_{\theta }R_{\alpha }(Q_{\theta }^{\ast },P_{0}).
\end{equation*}%
where $Q_{\theta }^{\ast }=\arg \inf_{Q\in \mathcal{M}_{\theta _{\mathcal{E}%
}}}R_{\alpha }(Q,P_{0}).$

By definition 
\begin{equation*}
\hat{\theta}_{n}:=\arg \inf_{\theta }R_{\alpha }(Q_{n}(\theta ),P_{n})
\end{equation*}%
We prove that 
\begin{equation}
\arg \inf_{\theta }R_{\alpha }(Q_{\theta }^{\ast },P_{0})={\theta }_{0}.
\label{arginfR=teta0}
\end{equation}

Two cases may occur:

\begin{enumerate}
\item[(Case 1)] If $P_{0}\in \mathcal{M}$, i.e. if $P_{0}=P_{\theta _{0}}$
for some unique $\theta _{0}$ in $\Theta $, then (\ref{arginfR=teta0}) \
holds.

\item[(Case 2)] If $P_{0}\notin \mathcal{M}$,%
\begin{equation*}
{\theta }_{0}=\arg \inf_{\theta }\inf_{Q\in \mathcal{M}_{\theta _{\mathcal{E}%
}}}D_{\alpha }(Q,P_{0}).
\end{equation*}%
Therefore (\ref{arginfR=teta0}) holds.\ 
\end{enumerate}

We make use of Lemma \ref{lemme 3} with 
\begin{eqnarray}
M_{n}(\theta ) &:&=R_{\alpha }(Q_{n}(\theta ),P_{n}),
\label{Def M_n(teta), m(teta)} \\
M(\theta ) &:&=R_{\alpha }(Q_{\theta }^{\ast },P_{0}).\newline
\notag
\end{eqnarray}%
Indeed $\hat{\theta}_{n}$ converges to $\theta _{0}$ making use of Lemma \ref%
{lemme 3}.\newline
Set $q_{n}(\theta )(x):=\frac{dQ_{n}(\theta )}{d\lambda }(x)$ , and 
\begin{equation*}
d(q_{n}(\theta ),q_{\theta }^{\ast })=\sup_{x\in K}|q_{n}(\theta
)(x)-q_{\theta }^{\ast }(x)|;
\end{equation*}%
it then holds (see the proof in the Appendix)

\begin{proposition}
\label{proposition 4} 
Suppose that the following condition%
\begin{equation}
\sup_{\{Q\in \mathcal{M}_{\theta _{\mathcal{E}}},Q^{\prime }\in \mathcal{M}%
_{\theta _{\mathcal{E}}^{^{\prime }}},d(\theta ,\theta ^{\prime })<\delta
\}}d(q,q^{\prime })<C\delta  \tag{M4}  \label{K}
\end{equation}%
holds \ for some \ $C>0$ independent on $\theta $ and $\theta ^{\prime }$;
then 
\begin{equation*}
\sup_{\theta \in \Theta }\sup_{x\in K}|q_{n}(\theta )(x)-q_{\theta }^{\ast
}(x)|{\overset{P}{\longrightarrow }}0.
\end{equation*}%
%
%
%
%
%
%
%
%
%
%
%
%
%
%
%
%
%
%
%
%
%
%
%
%
%
%
%
%
%
%
%
%
%
%
%
%
%
%
%
%
%
%
%
%
%
%
%
%
%
%
%
\end{proposition}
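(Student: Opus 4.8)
The plan is to upgrade the pointwise--in--$\theta$ convergence of Theorem~\ref{theoreme 10} to convergence uniform over the compact set $\Theta$, by a finite--cover argument whose single non--routine ingredient is an equicontinuity bound for the random maps $\theta\mapsto q_n(\theta)$ that is \emph{uniform in $n$ and in the sample}; producing that bound is exactly what condition~(\ref{K}) serves, once combined with the strong convexity of the criterion. Fix $\varepsilon>0$; I will exhibit $\delta>0$ depending only on $\varepsilon$ with (a) $d(\theta,\theta')<\delta\Rightarrow d(q^{\ast}_{\theta},q^{\ast}_{\theta'})<\varepsilon/3$, and (b) $d(\theta,\theta')<\delta\Rightarrow\sup_{x\in K}|q_n(\theta)(x)-q_n(\theta')(x)|<\varepsilon/3$ for every $n$ and every realisation of the sample. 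Granting (a)--(b), cover $\Theta$ by balls $B(\theta_1,\delta),\dots,B(\theta_N,\delta)$. By Theorem~\ref{theoreme 10} at each $\theta_i$ and a union bound, $P\bigl(\max_{i\le N}\sup_{x\in K}|q_n(\theta_i)(x)-q^{\ast}_{\theta_i}(x)|>\varepsilon/3\bigr)\to0$, and on the complementary event, for any $\theta$ one picks $\theta_i$ with $d(\theta,\theta_i)<\delta$ and writes $d(q_n(\theta),q^{\ast}_{\theta})\le d(q_n(\theta),q_n(\theta_i))+d(q_n(\theta_i),q^{\ast}_{\theta_i})+d(q^{\ast}_{\theta_i},q^{\ast}_{\theta})<\varepsilon$; hence $P\bigl(\sup_{\theta}\sup_{x\in K}|q_n(\theta)(x)-q^{\ast}_{\theta}(x)|>\varepsilon\bigr)\to0$, which is the assertion.

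Both (a) and (b) rest on two sample--free structural facts. First, $Q\mapsto R_\alpha(Q,P_n)$ and $Q\mapsto R_\alpha(Q,P_0)$ are uniformly continuous on $E$ for the sup norm with a modulus $\omega_1$ not depending on $n$ nor on the sample: $|\int(q^{\alpha+1}-\tilde q^{\alpha+1})\mathrm d\lambda|\le L\,d(q,\tilde q)$ on the uniformly bounded class $E$ (by~(\ref{E1})), while $|\frac1n\sum_i q^{\alpha}(X_i)-\frac1n\sum_i\tilde q^{\alpha}(X_i)|\le d(q,\tilde q)^{\alpha}$ and $|\int(q^{\alpha}-\tilde q^{\alpha})\mathrm dP_0|\le d(q,\tilde q)^{\alpha}$, because $\alpha\in(0,1]$. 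Second, on each convex set $\mathcal{M}_{\theta_{\mathcal{E}}}$ the functional $Q\mapsto R_\alpha(Q,P_n)$ is $c$-strongly convex for the $L^2(\lambda)$ norm with $c:=\alpha(\alpha+1)b^{\alpha-1}>0$, $b$ the bound in~(\ref{E1}); this $c$ is data--free since the only sample--dependent term $-(1+\frac1\alpha)\int q^{\alpha}\mathrm dP_n$ has nonnegative second variation for $\alpha\in(0,1]$ (curvature density $-(1+\frac1\alpha)\alpha(\alpha-1)q^{\alpha-2}\ge0$), so all curvature comes from $\int q^{\alpha+1}\mathrm d\lambda$, whose curvature density $\alpha(\alpha+1)q^{\alpha-1}$ is bounded below by $\alpha(\alpha+1)b^{\alpha-1}$ on $\{0\le q\le b\}$. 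Now fix $\theta,\theta'$ with $d(\theta,\theta')<\delta$. By~(\ref{K}) there is $\tilde Q\in\mathcal{M}_{\theta'_{\mathcal{E}}}$ with $d(\tilde q,q_n(\theta))<C\delta$ and, symmetrically, $\hat Q\in\mathcal{M}_{\theta_{\mathcal{E}}}$ with $d(\hat q,q_n(\theta'))<C\delta$; optimality of $Q_n(\theta)$ and $Q_n(\theta')$ over their sets plus the sup--norm modulus gives $|R_\alpha(Q_n(\theta),P_n)-R_\alpha(Q_n(\theta'),P_n)|\le\omega_1(C\delta)$, hence $R_\alpha(\tilde Q,P_n)\le R_\alpha(Q_n(\theta'),P_n)+2\omega_1(C\delta)$. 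Strong convexity on $\mathcal{M}_{\theta'_{\mathcal{E}}}$, whose minimizer is $Q_n(\theta')$, evaluated at $\tilde Q$, then gives $\frac c2\|\tilde q-q_n(\theta')\|_{L^2(\lambda)}^2\le R_\alpha(\tilde Q,P_n)-R_\alpha(Q_n(\theta'),P_n)\le2\omega_1(C\delta)$, so $\|q_n(\theta)-q_n(\theta')\|_{L^2(\lambda)}\le\lambda(K)^{1/2}C\delta+2\sqrt{\omega_1(C\delta)/c}=:\bar\omega(\delta)$; running the same computation with $P_0$ bounds $\|q^{\ast}_{\theta}-q^{\ast}_{\theta'}\|_{L^2(\lambda)}$ by $\bar\omega(\delta)$ as well (in particular $\theta\mapsto q^{\ast}_{\theta}$ is continuous).

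It remains to pass from $L^2(\lambda)$ to the sup norm. Since $\mathrm{cl}(E)$ is compact for the sup norm (Arzela--Ascoli under~(\ref{E1})--(\ref{Lipschitz unif pour q alfa})) and the $L^2(\lambda)$ topology on it is Hausdorff and coarser, the two topologies coincide there, so there is a modulus $\eta$ with $\eta(t)\to0$ as $t\to0$ and $d(q,q')\le\eta(\|q-q'\|_{L^2(\lambda)})$ for all $q,q'\in\mathrm{cl}(E)$; that $\eta(0{+})=0$ can also be read off the equicontinuity of $E$ (the Remark following~(\ref{Lipschitz unif pour q alfa})): if $\|q-q'\|_{L^2(\lambda)}$ is small yet $d(q,q')\ge\varepsilon_0$, equicontinuity forces $|q-q'|\ge\varepsilon_0/2$ on a ball of fixed volume, contradicting the $L^2$ smallness. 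Composing with the estimates of the previous paragraph, $\sup_{x\in K}|q_n(\theta)(x)-q_n(\theta')(x)|\le\eta(\bar\omega(\delta))$ and $d(q^{\ast}_{\theta},q^{\ast}_{\theta'})\le\eta(\bar\omega(\delta))$ uniformly in $n$ and in the sample, and choosing $\delta$ with $\eta(\bar\omega(\delta))<\varepsilon/3$ secures (a)--(b) and hence the proposition. The crux --- and the only place where something could go wrong --- is the passage from ``nearby constraint sets'', condition~(\ref{K}), to ``nearby minimizers'': for a criterion with little curvature the minimizers over two Hausdorff--close convex sets may be far apart, so this step essentially requires the uniform, sample--free lower bound on the curvature of $Q\mapsto R_\alpha(Q,P_n)$ over the sets $\mathcal{M}_{\theta_{\mathcal{E}}}$, and it is precisely there that the hypothesis $\alpha\in(0,1]$ and the uniform bound~(\ref{E1}) are used.
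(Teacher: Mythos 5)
Your proof is correct, but it takes a genuinely different route from the paper's. The paper argues by compactness of $\Theta$: it picks $\theta_{n}$ realising $\sup_{\theta}d(q_{n}(\theta),q_{\theta}^{\ast})$, extracts a convergent subsequence $\theta_{n_{j}}\rightarrow \underline{\theta}$, splits $d(q_{n_{j}}(\theta_{n_{j}}),q_{\theta_{n_{j}}}^{\ast})$ into three terms via $q_{n_{j}}(\underline{\theta})$ and $q_{\underline{\theta}}^{\ast}$, and kills the two transversal terms by invoking (\ref{K}) \emph{directly}: since $q_{n_{j}}(\theta_{n_{j}})$ and $q_{n_{j}}(\underline{\theta})$ are a pair of densities lying in two fibres whose indices are eventually $\delta$-close, the supremum in (\ref{K}) bounds their sup-distance by $C\delta$ with no further work (likewise for $q_{\theta_{n_{j}}}^{\ast}$ versus $q_{\underline{\theta}}^{\ast}$), while the middle term is Theorem~\ref{theoreme 10} at the fixed point $\underline{\theta}$. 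You instead run a finite-cover argument and manufacture the needed equicontinuity of $\theta\mapsto q_{n}(\theta)$ and $\theta\mapsto q_{\theta}^{\ast}$ from a strictly weaker, Hausdorff-distance reading of (\ref{K}) (for each $Q$ in one fibre there exists a $C\delta$-close $Q^{\prime}$ in the other), compensating with a sample-free strong-convexity estimate for $R_{\alpha}(\cdot,P_{n})$ and a compactness argument converting $L^{2}(\lambda)$ closeness back to sup-norm closeness. Both arguments are sound; what yours buys is robustness of the hypothesis: read literally, (\ref{K}) with $\theta=\theta^{\prime}$ forces each fibre $\mathcal{M}_{\theta_{\mathcal{E}}}$ to have sup-diameter smaller than $C\delta$ for every $\delta>0$, i.e.\ to be a singleton, and under that literal reading your curvature machinery is redundant, since $d(q_{n}(\theta),q_{n}(\theta^{\prime}))<C\delta$ is itself an instance of (\ref{K}); your argument, however, survives the weakening of (\ref{K}) to the Hausdorff form one presumably intends. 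The price is two standing hypotheses you should make explicit: convexity of $E$ (hence of $\mathcal{M}_{\theta_{\mathcal{E}}}$), without which the variational inequality behind the bound on $\|\tilde q-q_{n}(\theta^{\prime})\|_{L^{2}(\lambda)}$ fails --- the paper uses this convexity implicitly for uniqueness of projections, so it is a shared debt --- and a mild regularity of $K$ (for instance $K$ equal to the closure of its interior) so that the $L^{2}(\lambda)$ topology is Hausdorff on $\mathrm{cl}(E)$ and your modulus $\eta$ exists.
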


\begin{lemma}
\bigskip \label{Lemma 8} Under Condition (\ref{K}) in Proposition \ref%
{proposition 4} , condition (1) in Lemma \ref{lemme 3} holds i.e. 
\begin{equation*}
\sup_{\theta \in \Theta }|M_{n}(\theta )-M(\theta )|{\overset{P}{%
\longrightarrow }}0
\end{equation*}%
with $M_{n}(\theta )$ and $M(\theta )$ defined in (\ref{Def M_n(teta),
m(teta)})
\end{lemma}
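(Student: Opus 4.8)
The plan is to reduce the uniform-in-$\theta$ convergence of $M_n(\theta)=R_\alpha(Q_n(\theta),P_n)$ to $M(\theta)=R_\alpha(Q_\theta^\ast,P_0)$ to two ingredients that are already available: the uniform strong convergence of the projections $q_n(\theta)\to q_\theta^\ast$ furnished by Proposition \ref{proposition 4} (under condition (\ref{K})), and the uniform law of large numbers for the class $E$ established earlier (the class $x\mapsto q^\alpha$ lies in $C_M^\alpha(K)$ by (\ref{Lipschitz unif pour q alfa}), hence $E$ is Glivenko--Cantelli). First I would split
\begin{equation*}
|M_n(\theta)-M(\theta)|\leq |R_\alpha(Q_n(\theta),P_n)-R_\alpha(Q_\theta^\ast,P_n)|+|R_\alpha(Q_\theta^\ast,P_n)-R_\alpha(Q_\theta^\ast,P_0)|,
\end{equation*}
and bound the supremum over $\theta$ of each term separately.

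For the second term, recalling $R_\alpha(Q,P)=\mathfrak D^0(Q)+\int\rho_q\,dP$ with $\rho_q=-(1+1/\alpha)q^\alpha$, the difference is $(1+1/\alpha)|\frac1n\sum_{i=1}^n (q_\theta^\ast)^\alpha(X_i)-E_{P_0}[(q_\theta^\ast)^\alpha(X)]|$; since every $q_\theta^\ast$ lies in $E$, this is dominated by $(1+1/\alpha)\sup_{q\in E}|P_n q^\alpha - P_0 q^\alpha|$, which goes to $0$ in probability (indeed a.s.) by the Glivenko--Cantelli property of $E$, uniformly in $\theta$. For the first term I would use that $Q\mapsto R_\alpha(Q,P_n)$ is, on the uniformly bounded equicontinuous class $E$, uniformly continuous for the sup-norm: writing the difference as $\int\big(q_n(\theta)^{\alpha+1}-(q_\theta^\ast)^{\alpha+1}\big)\,d\lambda - (1+1/\alpha)\int\big(q_n(\theta)^\alpha-(q_\theta^\ast)^\alpha\big)\,dP_n$, both pieces are controlled by $\lambda(K)$ (resp.\ $1$) times a modulus-of-continuity bound on $u\mapsto u^{\alpha+1}$ and $u\mapsto u^\alpha$ applied to $\|q_n(\theta)-q_\theta^\ast\|_\infty$, using (\ref{E1}) to keep arguments in a fixed compact interval and (for $u^\alpha$ near $0$) the Hölder bound $|u^\alpha-v^\alpha|\leq|u-v|^\alpha$ when $\alpha\in(0,1]$. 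Taking $\sup_\theta$ and invoking $\sup_\theta\|q_n(\theta)-q_\theta^\ast\|_\infty\overset{P}{\to}0$ from Proposition \ref{proposition 4} finishes this term.

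The delicate point — and the reason condition (\ref{K}) is invoked — is that the first-term bound is only useful because the projections converge \emph{uniformly} in $\theta$; pointwise convergence (Theorem \ref{theoreme 10}) would not suffice to pass the supremum inside. So the real content has already been packaged into Proposition \ref{proposition 4}, and here the only remaining work is the deterministic continuity estimate for $R_\alpha(\cdot,P)$ on $E$ (uniform in the second argument $P\in\{P_n\}$, which is automatic since $P_n$ enters only through an average of values in a fixed bounded range) plus the Glivenko--Cantelli bound. I would present the continuity estimate carefully for the borderline $\alpha<1$ case where $u\mapsto u^\alpha$ is merely Hölder, not Lipschitz, at the origin, since that is where a naive Lipschitz argument breaks; everything else is routine. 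Combining the two bounds gives $\sup_{\theta\in\Theta}|M_n(\theta)-M(\theta)|\overset{P}{\to}0$, which is condition (1) of Lemma \ref{lemme 3}.
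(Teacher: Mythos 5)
Your proposal is correct and follows essentially the same route as the paper: the same splitting of $\sup_\theta|M_n(\theta)-M(\theta)|$ into a term controlled by $\sup_\theta\|q_n(\theta)-q_\theta^\ast\|_\infty$ (Proposition \ref{proposition 4}, under (\ref{K})) and an empirical-process term $\sup_\theta\left\vert\int q_\theta^{\ast\alpha}\,\mathrm{d}(P_n-P_0)\right\vert$ handled by a Glivenko--Cantelli argument; the paper merely writes the first part as two separate integrals ($R_1$ and $R_2$) and invokes a GC theorem for the $\theta$-indexed class $\{q_\theta^{\ast\alpha}\}$ rather than dominating by the whole class $E$ as you do. Your explicit treatment of the H\"older (rather than Lipschitz) modulus of $u\mapsto u^\alpha$ near the origin for $\alpha<1$ is in fact more careful than the paper's ``$\times\,Cste$'' bound for the $R_2$ term, and is a welcome refinement.
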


We now prove that the second condition in Lemma \ref{lemme 3} holds.

\begin{lemma}
\label{Lemma 9} For any $\varepsilon >0$, $\inf_{|\theta -\theta
_{0}|>\epsilon }M(\theta )>M(\theta _{0})$.
\end{lemma}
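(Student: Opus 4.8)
The plan is to verify this well-separated minimum condition (it is exactly hypothesis (2) of Lemma~\ref{lemme 3} for the outer minimization) by a soft compactness argument. Recall that $M(\theta)=R_{\alpha}(Q_{\theta}^{\ast},P_{0})$ with $Q_{\theta}^{\ast}=\arg\inf_{Q\in\mathcal{M}_{\theta_{\mathcal{E}}}}R_{\alpha}(Q,P_{0})$, that this projection exists and is unique for every $\theta$ by Proposition~\ref{proposition 3}, and that $\theta_{0}$ is the \emph{unique} minimizer of $\theta\mapsto M(\theta)$ over $\Theta$; this last fact is what (\ref{arginfR=teta0}) records, using identifiability together with Theorem~\ref{theoreme 1} under (\ref{cond regul par rapp a E}) in the case $P_{0}\in\mathcal{M}$. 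So it suffices to show that $\theta\mapsto M(\theta)$ is lower semicontinuous on the compact set $\Theta$: uniqueness of the minimizer then upgrades the infimum over the complement of a neighbourhood into a strict inequality.

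For lower semicontinuity I would proceed as follows. Let $\theta_{k}\to\theta^{\ast}$ in $\Theta$ and put $Q_{k}:=Q_{\theta_{k}}^{\ast}$. By Proposition~\ref{theoreme 2 copy(1)} (relative compactness of $\mathcal{M}_{\mathcal{E}}$ for the strong topology), along a subsequence $Q_{k_{j}}$ converges strongly to some $Q^{\infty}\in\mathcal{M}_{\mathcal{E}}$, i.e. $\sup_{x\in K}|q_{k_{j}}(x)-q^{\infty}(x)|\to 0$ with $q^{\infty}\in E$ by Theorem~\ref{theoreme 3}. Each $Q_{k_{j}}$ satisfies $\int g(x,\theta_{k_{j}})\,q_{k_{j}}(x)\,dx=0$; since $g$ is uniformly continuous on $K\times\Theta$ (so $\sup_{x\in K}\|g(x,\theta_{k_{j}})-g(x,\theta^{\ast})\|\to 0$) and the densities $q_{k_{j}}$ are uniformly bounded and converge uniformly, passing to the limit gives $\int g(x,\theta^{\ast})\,q^{\infty}(x)\,dx=0$, hence $Q^{\infty}\in\mathcal{M}_{\theta^{\ast}_{\mathcal{E}}}$. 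Moreover $R_{\alpha}(\cdot,P_{0})$ is lower semicontinuous for the strong topology, since it differs from $D_{\alpha}(\cdot,P_{0})$ (l.s.c. by Proposition~\ref{proposition 2}) only by the finite constant $\tfrac{1}{\alpha}\int p_{0}^{\alpha+1}\,d\lambda$. Therefore $\liminf_{j}M(\theta_{k_{j}})=\liminf_{j}R_{\alpha}(Q_{k_{j}},P_{0})\ge R_{\alpha}(Q^{\infty},P_{0})\ge M(\theta^{\ast})$, and applying this along a subsequence realizing $\liminf_{k}M(\theta_{k})$ gives $\liminf_{k}M(\theta_{k})\ge M(\theta^{\ast})$. (Under the stronger hypothesis (\ref{K}) one gets more directly that $\theta\mapsto q_{\theta}^{\ast}$ is Lipschitz in the sup norm, and since $t\mapsto t^{\alpha},t^{\alpha+1}$ are uniformly continuous on the bounded range allowed by (\ref{E1}) and $\lambda(K)<\infty$, that $\theta\mapsto M(\theta)$ is actually continuous; either property is enough below.)

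To conclude, fix $\varepsilon>0$ and set $A_{\varepsilon}:=\{\theta\in\Theta:|\theta-\theta_{0}|\ge\varepsilon\}$, a closed subset of the compact $\Theta$, hence compact. If $A_{\varepsilon}=\emptyset$ then $\{|\theta-\theta_{0}|>\varepsilon\}=\emptyset$ and the asserted infimum is $+\infty>M(\theta_{0})$. Otherwise the lower semicontinuous function $M$ attains its infimum over $A_{\varepsilon}$ at some $\theta_{1}\in A_{\varepsilon}$; as $|\theta_{1}-\theta_{0}|\ge\varepsilon>0$ we have $\theta_{1}\ne\theta_{0}$, and since $\theta_{0}$ is the unique minimizer of $M$ over $\Theta$ this forces $M(\theta_{1})>M(\theta_{0})$. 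Because $\{|\theta-\theta_{0}|>\varepsilon\}\subseteq A_{\varepsilon}$, we get $\inf_{|\theta-\theta_{0}|>\varepsilon}M(\theta)\ge M(\theta_{1})>M(\theta_{0})$, as claimed. The only genuinely non-formal step is the passage to the limit in the moment constraint inside the lower-semicontinuity argument — verifying that a strong limit of projections $Q_{\theta_{k}}^{\ast}$ remains feasible for the limiting constraint — and it is precisely there that uniform continuity of $g$ and the closedness of $\mathcal{M}_{\mathcal{E}}$ (Theorem~\ref{theoreme 3}) enter; everything else is compactness plus the already-established uniqueness of $\theta_{0}$.
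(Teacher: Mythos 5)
Your proof is correct, and it takes a genuinely different route from the paper's. The paper argues directly: for $|\theta-\theta_{0}|>\epsilon$, condition (\ref{M2}) forces $d(q_{\theta}^{\ast},q_{\theta_{0}}^{\ast})>\delta$, and then strict convexity of $Q\mapsto R_{\alpha}(Q,P_{0})$ together with uniqueness of the projection rules out $R_{\alpha}(Q_{\theta}^{\ast},P_{0})=R_{\alpha}(Q_{\theta_{0}}^{\ast},P_{0})$. As written, that argument establishes the \emph{pointwise} strict inequality $M(\theta)>M(\theta_{0})$ for each fixed $\theta$ in the complement of the $\epsilon$-ball, but it does not explicitly explain why the \emph{infimum} over that set stays strictly above $M(\theta_{0})$ — the step from ``no $\theta_{1}$ achieves equality'' to ``the inf is bounded away'' is left implicit. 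Your argument supplies exactly that missing bridge: you prove lower semicontinuity of $\theta\mapsto M(\theta)$ by extracting a strongly convergent subsequence of projections (Proposition~\ref{theoreme 2 copy(1)}, Theorem~\ref{theoreme 3}), passing to the limit in the moment constraint via (\ref{G1})--(\ref{G2}) and (\ref{E1}), and invoking the l.s.c. of $R_{\alpha}(\cdot,P_{0})$ from Proposition~\ref{proposition 2}; compactness of $\Theta$ then converts the pointwise strict inequality (which you get from uniqueness of the minimizer, i.e. (\ref{arginfR=teta0})) into the well-separation required by condition (2) of Lemma~\ref{lemme 3}. What the paper's route buys is brevity and a visible role for hypothesis (\ref{M2}); what yours buys is a complete and self-contained verification that does not even need (\ref{M2}) at this point, only compactness of $\Theta$, the closedness/compactness results of Section~\ref{Section3}, and the uniqueness of $\theta_{0}$. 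The one caveat is that your argument, like the paper's, ultimately rests on the asserted uniqueness of $\arg\inf_{\theta}M(\theta)$, which in the misspecified case (Case 2) the paper states rather than proves; you inherit that assumption rather than resolve it.
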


\begin{proof}
Denote $q_{\theta _{0}}^{\ast }$ \ the projection of $P_{0}$ on $\mathcal{M}%
_{\mathcal{E}}$, thus $\theta _{0}:=\arg \inf_{\theta \in \Theta }R_{\alpha
}(Q_{\theta }^{\ast },P_{0})$. For any $\theta \in \Theta ,$ let $Q_{\theta
}^{\ast }$ be the projection of $P_{0}$ on $\mathcal{M}_{\theta _{\mathcal{E}%
}}$; hence 
\begin{equation*}
R_{\alpha }(Q_{\theta }^{\ast },p_{0})\geq R_{\alpha }(Q_{\theta _{0}}^{\ast
},P_{0}).
\end{equation*}%
\newline
We prove that equality cannot hold in the above display. Let $|\theta
-\theta _{0}|>\epsilon $. \ Assume that there exists some $\theta _{1}$ with 
\begin{equation*}
d(q_{\theta _{1}}^{\ast },q_{\theta _{0}}^{\ast })>\delta
\end{equation*}%
$\ $such that 
\begin{equation}
R_{\alpha }(Q_{\theta _{1}}^{\ast },P_{0})=R_{\alpha }(Q_{\theta _{0}}^{\ast
},P_{0}),  \label{Thm11}
\end{equation}%
which cannot hold, since $\theta _{0}^{\ast }$ achieves the minimum of $%
R_{\alpha }(Q_{\theta }^{\ast },P_{0})$ on $\theta $, and $Q\longrightarrow
R_{\alpha }(Q,P_{0})$ is strictly convex.\ 
\end{proof}

We also prove the third condition in Lemma \ref{lemme 3}.

\begin{lemma}
\label{Lemma 10} $M_{n}(\theta )\leq M(\theta )+\circ_{p}(1)$.
\end{lemma}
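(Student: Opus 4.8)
The plan is to combine the defining minimality of $Q_{n}(\theta)$ with an elementary law of large numbers for the bounded score $(q_{\theta}^{\ast})^{\alpha}$, where $q_{\theta}^{\ast}:=dQ_{\theta}^{\ast}/d\lambda$ is the Lebesgue density of the projection $Q_{\theta}^{\ast}=\arg\inf_{Q\in\mathcal{M}_{\theta_{\mathcal{E}}}}R_{\alpha}(Q,P_{0})$, which exists by Proposition \ref{proposition 3} and in particular lies in $\mathcal{M}_{\theta_{\mathcal{E}}}$. Since $Q_{n}(\theta)$ minimizes $Q\mapsto R_{\alpha}(Q,P_{n})$ over $\mathcal{M}_{\theta_{\mathcal{E}}}$ and $Q_{\theta}^{\ast}$ is an admissible competitor, one gets at once
\[
M_{n}(\theta)=R_{\alpha}(Q_{n}(\theta),P_{n})\leq R_{\alpha}(Q_{\theta}^{\ast},P_{n}),
\]
so it only remains to compare $R_{\alpha}(Q_{\theta}^{\ast},P_{n})$ with $M(\theta)=R_{\alpha}(Q_{\theta}^{\ast},P_{0})$.

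Next I would use decomposability in the form (\ref{R_alfa}): since the term $\int (q_{\theta}^{\ast})^{\alpha+1}\,\mathrm{d}\lambda$ does not involve the data and cancels in the difference,
\[
R_{\alpha}(Q_{\theta}^{\ast},P_{n})-R_{\alpha}(Q_{\theta}^{\ast},P_{0})=-\Bigl(1+\tfrac{1}{\alpha}\Bigr)\Bigl(\tfrac{1}{n}\sum_{i=1}^{n}(q_{\theta}^{\ast})^{\alpha}(X_{i})-\int (q_{\theta}^{\ast})^{\alpha}\,\mathrm{d}P_{0}\Bigr).
\]
By (\ref{E1}) the function $(q_{\theta}^{\ast})^{\alpha}$ is bounded on $K$, hence $P_{0}$-integrable, so the classical strong law of large numbers applied to the i.i.d.\ variables $(q_{\theta}^{\ast})^{\alpha}(X_{i})$ forces the right-hand side to $0$ almost surely, in particular it is $o_{p}(1)$. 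Plugging this back gives $M_{n}(\theta)\leq R_{\alpha}(Q_{\theta}^{\ast},P_{n})=M(\theta)+o_{p}(1)$, which is the claim.

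I do not expect a genuine obstacle here: the statement is a one-sided comparison, so no convergence of the projections $Q_{n}(\theta)\to Q_{\theta}^{\ast}$ is needed, only their minimality and a scalar law of large numbers. The only point requiring mild care is whether one wants the estimate pointwise in $\theta$ (as stated) or uniformly in $\theta$ for use in the outer minimization; in the latter case one would instead bound $\sup_{\theta}\bigl|R_{\alpha}(Q_{\theta}^{\ast},P_{n})-R_{\alpha}(Q_{\theta}^{\ast},P_{0})\bigr|$ by $\bigl(1+\tfrac{1}{\alpha}\bigr)\sup_{q\in E}\bigl|\tfrac{1}{n}\sum_{i=1}^{n}q^{\alpha}(X_{i})-\int q^{\alpha}\,\mathrm{d}P_{0}\bigr|$ and invoke the Glivenko--Cantelli property of the class $\{q^{\alpha}:q\in E\}$ guaranteed by condition (\ref{Lipschitz unif pour q alfa}) via Corollary 2.7.2 of \cite{Van der Vaart Wellner}, exactly as in the proof of Lemma \ref{Lemma 8}; but for the stated conclusion the elementary argument above suffices.
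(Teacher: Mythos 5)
Your proof is correct and follows essentially the same route as the paper: bound $M_{n}(\theta)$ by $R_{\alpha}(Q_{\theta}^{\ast},P_{n})$ via the minimality of $Q_{n}(\theta)$, then show $R_{\alpha}(Q_{\theta}^{\ast},P_{n})-R_{\alpha}(Q_{\theta}^{\ast},P_{0})\to 0$ in probability (the paper invokes the Glivenko--Cantelli theorem where you use the scalar law of large numbers for the single bounded function $(q_{\theta}^{\ast})^{\alpha}$, which is all that is needed for fixed $\theta$). Your closing remark on the pointwise versus uniform version is a sensible clarification but does not change the argument.
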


\begin{proof}
By definition $M_{n}(\theta )<R_{\alpha }(Q_{\theta }^{\ast },P_{n})$ .%
\newline
Since $R_{\alpha }(Q_{\theta }^{\ast },P_{n})-R_{\alpha }(Q_{\theta }^{\ast
},P_{0}){\overset{P}{\longrightarrow }}0$ by Glivenko Cantelli Theorem, it
follows that 
\begin{equation*}
M_{n}(\theta )\leq R_{\alpha }(Q_{\theta }^{\ast },P_{0})+\eta _{n}
\end{equation*}%
for $n$ large enough,where $\eta _{n}{\overset{P}{\longrightarrow }}0.$ 
\end{proof}

As a consequence of the above arguments, the following convergence result
for the minimization of power type divergences on semiparametric models
defined by moment conditions holds.

\begin{theorem}
Under all the above conditions (E1), (E2), (M1), (M2), (M3) and (M4) it
holds, whenever $P_{0}$ $\ $belongs to $\mathcal{M}$ or $P_{0}$ belongs to $%
\mathcal{M}_{\mathcal{E}},$ with corresponding $\theta _{0},$%
\begin{equation*}
\lim_{n\rightarrow \infty }D_{\alpha }(\mathcal{M},P_{n})\rightarrow 0
\end{equation*}%
and 
\begin{equation*}
\lim_{n\rightarrow \infty }\widehat{\theta }_{n}=\theta _{0}
\end{equation*}%
Also we get 
\begin{equation*}
\lim_{n\rightarrow \infty }d\left( q_{\widehat{\theta _{n}}},p_{\theta
_{0}}\right) =0
\end{equation*}%
and all convergences above hold in probability.
\end{theorem}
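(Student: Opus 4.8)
The plan is to assemble the final theorem from the pieces already established, combining the inner-minimization convergence (Theorem \ref{theoreme 10}), the uniform version over $\theta$ (Proposition \ref{proposition 4}), and the application of Lemma \ref{lemme 3} to the outer minimization via Lemmas \ref{Lemma 8}, \ref{Lemma 9}, \ref{Lemma 10}. First I would invoke Proposition \ref{proposition 4}, whose hypothesis is precisely (M4): this yields $\sup_{\theta\in\Theta}\sup_{x\in K}|q_n(\theta)(x)-q_\theta^\ast(x)|\to 0$ in probability. Next I would note that Lemma \ref{Lemma 8} upgrades this to $\sup_{\theta\in\Theta}|M_n(\theta)-M(\theta)|\overset{P}{\to}0$, which is condition (1) of Lemma \ref{lemme 3}, where $M_n(\theta)=R_\alpha(Q_n(\theta),P_n)$ and $M(\theta)=R_\alpha(Q_\theta^\ast,P_0)$. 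Lemma \ref{Lemma 9}, which relies on (M2) (well-separation) and strict convexity of $Q\mapsto R_\alpha(Q,P_0)$, gives condition (2), the well-separation of the limiting criterion's minimizer $\theta_0$. Lemma \ref{Lemma 10} gives condition (3), namely $M_n(\theta_0)\le M(\theta_0)+o_p(1)$, using that $M_n(\theta_0)\le R_\alpha(Q_{\theta_0}^\ast,P_n)$ by definition of $Q_n(\theta_0)$ together with Glivenko--Cantelli. Lemma \ref{lemme 3} then delivers $\widehat\theta_n\overset{P}{\to}\theta_0$.

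The identification $\arg\inf_\theta R_\alpha(Q_\theta^\ast,P_0)=\theta_0$ is justified exactly as in \eqref{arginfR=teta0}: under (M3), Theorem \ref{theoreme 1} shows that when $P_0\in\mathcal{M}$ (so $P_0=P_{\theta_0}$) the outer infimum is attained uniquely at $\theta_0$, and when $P_0\in\mathcal{M}_\mathcal{E}$ this is immediate since then $P_{\theta_0}$ itself lies in $\mathcal{M}_{\theta_{0_\mathcal{E}}}$ and achieves zero divergence within its own slice; the case $P_0\notin\mathcal{M}$ is not needed here because the theorem restricts to $P_0\in\mathcal{M}$ or $P_0\in\mathcal{M}_\mathcal{E}$. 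For the convergence $D_\alpha(\mathcal{M},P_n)\to 0$, I would write $D_\alpha(\mathcal{M},P_n)=\inf_\theta\inf_{Q\in\mathcal{M}_{\theta_\mathcal{E}}}D_\alpha(Q,P_n)$ and bound it above by $D_\alpha(P_{\theta_0},P_n)$ (legitimate since $P_{\theta_0}\in\mathcal{M}_{\theta_{0_\mathcal{E}}}$ in the relevant case), and then use that $D_\alpha(P_{\theta_0},P_n)=R_\alpha(P_{\theta_0},P_n)+\mathfrak{D}^1(P_n)$-type decomposition collapses appropriately; more directly, $R_\alpha(P_{\theta_0},P_n)\to R_\alpha(P_{\theta_0},P_0)=-\int p_{\theta_0}^{\alpha+1}/\alpha\cdot\alpha\dots$, and $D_\alpha(P_{\theta_0},P_0)=0$, so the plug-in criterion evaluated at the truth converges to its minimal value; combined with nonnegativity of $D_\alpha$ and the continuity of the projection map this forces $D_\alpha(\mathcal{M},P_n)\to 0$ in probability. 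A short argument via the Glivenko--Cantelli/Donsker property of $E$ established earlier makes $\frac1n\sum q^\alpha(X_i)$ converge uniformly, which is what is needed.

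Finally, for $d(q_{\widehat\theta_n},p_{\theta_0})\to 0$, I would combine the uniform inner convergence from Proposition \ref{proposition 4} with the just-proved $\widehat\theta_n\overset{P}{\to}\theta_0$. Write
\begin{equation*}
d(q_{\widehat\theta_n},p_{\theta_0})\le d\bigl(q_n(\widehat\theta_n),q_{\widehat\theta_n}^\ast\bigr)+d\bigl(q_{\widehat\theta_n}^\ast,q_{\theta_0}^\ast\bigr),
\end{equation*}
where $q_n(\widehat\theta_n)=q_{\widehat\theta_n}$ is the estimated density. The first term is dominated by $\sup_{\theta}d(q_n(\theta),q_\theta^\ast)\overset{P}{\to}0$ by Proposition \ref{proposition 4}; the second term goes to zero because $\theta\mapsto Q_\theta^\ast$ is continuous for the strong topology (this continuity follows from (M4) and the uniqueness/stability of the projection, and $q_{\theta_0}^\ast=p_{\theta_0}$ when $P_0=P_{\theta_0}\in\mathcal{M}_{\theta_{0_\mathcal{E}}}$ since then $P_{\theta_0}$ is its own projection), together with $\widehat\theta_n\overset{P}{\to}\theta_0$ and the continuous mapping theorem.

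The main obstacle I anticipate is the last step's reliance on the continuity of the projection map $\theta\mapsto Q_\theta^\ast$ in the strong topology: this is morally a consequence of condition (M4) and strict convexity, but making it rigorous requires a stability-of-projections argument — if $\theta_n\to\theta_0$ then any strong limit point of $Q_{\theta_n}^\ast$ must lie in $\mathcal{M}_{\theta_{0_\mathcal{E}}}$ (using the uniform continuity of $g$ and closedness from Theorem \ref{theoreme 3}) and must be a minimizer of $R_\alpha(\cdot,P_0)$ there, hence equals $Q_{\theta_0}^\ast$ by uniqueness, and relative compactness of $\mathcal{M}_\mathcal{E}$ (Proposition \ref{theoreme 2 copy(1)}) supplies the limit points. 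Care is also needed to ensure all the "$o_p$" bookkeeping is uniform enough that the three hypotheses of Lemma \ref{lemme 3} genuinely hold simultaneously at the parameter level, but this is exactly what Lemmas \ref{Lemma 8}--\ref{Lemma 10} provide.
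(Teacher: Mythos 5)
Your proposal is correct and follows essentially the same route as the paper, which states this theorem simply "as a consequence of the above arguments," i.e.\ by verifying conditions (1)--(3) of Lemma \ref{lemme 3} through Lemmas \ref{Lemma 8}, \ref{Lemma 9} and \ref{Lemma 10}, with the identification of $\theta_0$ coming from Theorem \ref{theoreme 1} under (M3). The "main obstacle" you flag --- continuity of $\theta\mapsto q_\theta^{\ast}$ in the sup norm --- is in fact supplied directly by condition (M4), exactly as the paper already uses it for the terms $I_1$ and $I_3$ in the proof of Proposition \ref{proposition 4}, so no additional stability-of-projections argument is needed.
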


\begin{remark}
Note that a sufficient condition for existence and uniqueness of the
projection of $P_{0}$ on $\mathcal{M}$ is obtainable under weaker condition
than (E2); however (E2) implies that $E$ is a Donsker class (hence a
Glivenko Cantelli class), which is a convenient argument for the convergence
of the estimator; in the same vein, this Donsker property should clearly
hold for the asymptotic distribution. Therefore (E2) seems a suitable nearly
unavoidable assumption.
\end{remark}

\subsection{A remark on the asymptotic distribution of the estimate}

By its very nature the $D_{\alpha }$ divergence is suited to statistical
inference in strictly parametric setting, as is the modified
Kullback-Leibler divergence, whose minimization amounts to maximum
likelihood. Recall that both coincide when $\alpha =0.$ In the case when $%
\alpha =0$, the likelihood estimating equation, assuming $P_{\theta }$ with
density $p_{\theta }$ writes 
\begin{equation}
\sum_{i=1}^{n}\acute{l}(X_{i}):=\sum_{i=1}^{n}\left( \frac{d}{d\theta }\log
p_{\theta }(X_{i})\right) _{\widehat{\theta }_{n}}=0  \label{ML equation}
\end{equation}%
where $\widehat{\theta }_{n}$ denotes the MLE, under suitable regularity
conditions.

Recall the general scheme leading to the asymptotic distribution of
estimators adapted to the present context: Assuming that the distribution $%
P_{\theta _{0}}$ of the data, denoted $P_{\theta _{0},q_{0}}$ (with $%
q_{0}:=dP_{\theta _{0}}/d\lambda $) belongs to the model \ $\mathcal{M}_{%
\mathcal{E}}$ and is embedded in a class of distributions $P_{\theta ,q}$
with $\theta \in \Theta $ and $q\in \mathcal{H}$, a Hilbert space of
functions defined on $K$ which contains $\mathcal{E}$. A classical method
amounts to substitute the classical score $\acute{l}=$ $\frac{d}{d\theta }%
\log p_{\theta }$ in the estimating equation (\ref{ML equation}) by the
efficient score $\widetilde{l}_{\theta ,q}$ , where $\widetilde{l}_{\theta
,q}(x):=\acute{l}_{\theta ,q}(x)-\Pi _{\theta ,q}($ $\acute{l}_{\theta ,q})$
where $\acute{l}_{\theta ,q}(x)$ denotes the parametric score function in
the semi parametric model $P_{\theta ,q}$ for $\theta $ when $q$ is fixed
and $\Pi _{\theta ,q}$ is the orthogonal projection onto the closure of the
nuisance score space for $q$.\ The Influence function of the resulting
efficient estimator $\widehat{\theta }_{n}$ is $\widetilde{l}_{\theta
_{0},q_{0}}$ which yields the asymptotic Gaussian approximation for $\sqrt{n}
$ $\left( \widehat{\theta }_{n}-\theta _{0}\right) $, with asymptotic
covariance matrix $\int $ $\widetilde{l}_{\theta _{0},q_{0}}^{T}$ $%
\widetilde{l}_{\theta _{0},q_{0}}dP_{\theta _{0},q_{0}}.$ We refer to \cite%
{VanderVaart 1998} for a precise account and examples, with explicit
techniques for the estimation of the asymptotic covariance of the
estimator.\ 

In our context, the estimator $\widehat{\theta }_{n}$ results from the two
steps optimization scheme, defined as inner optimization which for any $%
\theta $ provides $q_{n}(\theta )$ in $\mathcal{M}_{\mathcal{E}_{\theta }}$
and the outer optimization which yields $\widehat{\theta }_{n}.$ The case
when $\alpha \in 0,1]$ may be considered following a similar approach as in
the case $\alpha =0$ , but the description of the nuisance score space is
somehow more involved, since it amounts to consider the set of differentials
of the sub model $t\rightarrow P_{\theta _{t},q(\theta _{t})\text{ }}$ with $%
P_{\theta _{0},q(\theta _{0})}=P_{\theta _{0},q_{0}}$ \ along regular paths
at $t=0$, and to obtain the Influence function of $\widehat{\theta }_{n}$
through projection. This two steps procedure has been considered in the
econometric literature in the context of moment constrained optimization (of
regression type) with functional nuisance parameter; see \cite{Andrews 1994}%
, \cite{Ai Chen Econometrika 2003}, \cite{klaassen susyanto 2019} and
references therein. A convenient approach consists in approximating elements
in the nuisance space $\mathcal{H}$ by finite dimensional vectors (for
examples by sieves); see e.g. \cite{Tsiatis2006} for explicit treatment. \ 

A description of those asymptotics in the context of regression semi
parametric models is postponed to a future work.

\section{\protect\bigskip Estimating with polynomials}

A very simple toy case illustrates the present approach; consider a class of
polynomials $p(x)=ax^{2}$ $+bx+c\ $on $\left[ 0,1\right] $ which take
positive values on $\left[ 0,1\right] .$ Let $p_{0}(x)$ satisfy $%
\int_{0}^{1}p_{0}(x)dx=1$, $a=4$, and $\int_{0}^{1}xp_{0}(x)dx=\mu =.4.$ The
corresponding polynomial is positive on $\mathbb{R}$.\ Let $E$ be the class
of polynomials with coefficients close to those of $p_{0}$ and such that
both (E1) and (E2) hold (all elements in $E$ are bounded Lipschitz functions
on $\left[ 0,1\right] $). \ Regularity of elements in this class guarantees
this latest assertion.\ Furthermore $l=1$ and $g(x,\mu )=x-\mu .$ We
simulate $n$ points with density $p_{0}$ and choose $\alpha =1/2.$ The aim
of this exercise is to recover estimates of $a$ and $\mu .$

With $d=1$, let $g(x,\mu )=x-\mu $ with $\mu \in \left[ u,v\right] $, a
closed interval in $\mathbb{R}$, define the moment condition.

Conditions (M1) and (M2) are easily verified, with $\theta =\mu \in \left(
u,v\right) \ni .4$.\ 

The problem at hand here is therefore to find the value of $(a,\mu )$. For
the estimation of $\mu $ solve 
\begin{equation*}
\hat{\mu}=\arg \min_{\mu \in \Theta }\min_{Q\in \mathcal{M}_{\mu
_{E}}}R_{\alpha }(Q,P_{n})
\end{equation*}%
where $P_{n}$ will be obtained by sampling with the true parameters, for a
given sample size $n$. For any running value of $\mu $, say $\mu _{k}$ , the
minimization of $R_{\alpha }(Q,P_{n})$ with respect to all polynomials $q$
with degree less or equal 2, with integral 1, with positive values on $\left[
0,1\right] $, and satisfying $\int_{0}^{1}xq(x)dx=\mu _{k}$ provides $Q_{k}$
in $\mathcal{M}_{\mu _{_{k}E}}$, hence the inner optimization. \ Evaluation
of $R_{\alpha }(Q_{k},P_{n})$ on a grid of values $\mu _{k}$ provides the
outer optimization. Figures 1 and 2 hereunder capture the results; in Figure
2, we quote the estimate of $a$, since both constraints $%
\int_{0}^{1}q(x)dx=1 $ and $\int_{0}^{1}xq(x)dx=\mu $ provide $q$ for given $%
a.$

\begin{figure}[H] 
\includegraphics[width=350px]{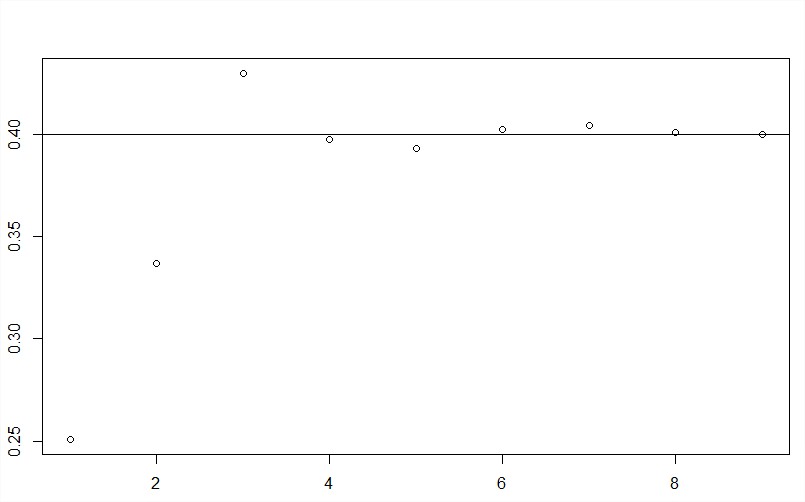}
\caption{Estimation of the parameter $\protect\mu$ for $%
n=10,50,100,500,1000,5000,10000,50000,100000$.The abscissa quotes 2 for n = 50, 4 for n = 500, 6 for n = 5000, 8 for n = 50000 \\[0.5cm]
}
\includegraphics[width=350px]{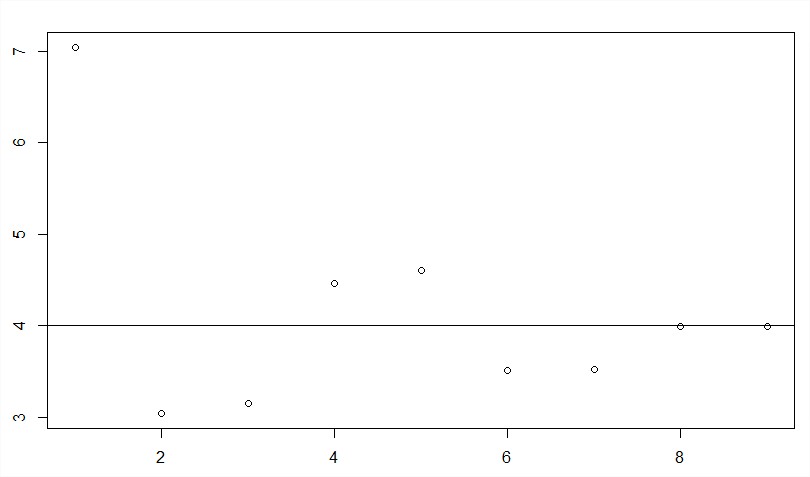}
\caption{Estimation of the coefficient $a$ for $%
n=10,50,100,500,1000,5000,10000,50000,100000$.The abscissa quotes 2 for n = 50, 4 for n = 500, 6 for n = 5000, 8 for n = 50000 \\[0.5cm]
}  
\end{figure}

\section{\protect\bigskip Conclusion}

Adding smoothness to moment constrained models introduces the need for
adequate inferential techniques; indeed the context underlying the fact that
the omnibus $L^{2}$ and Kullback Leibler\ divergences are the only valid
ones for models defined through moment constraints, as discussed in \cite%
{Csiszar 1991}, fall short under additional regularity requirements. This
introduces the need for divergence based approaches under alternative pseudo
distances.

In this paper we have stated a set of regularity conditions pertaining to a
smooth moment constrained model indexed by a finite dimensional parameter of
interest $\theta $ and a functional nuisance parameter $q$ in some function
class $E$ (which is the space of smooth densities of the model); those allow
for adequacy with some power divergences $D_{\alpha }$ with $0<\alpha \leq 1$%
; those conditions firstly validate the choice of $D_{\alpha }$ for the
inference through their analytical properties (implying existence and
uniqueness of $D_{\alpha }$ -projections on the model); furthermore they
entail consistency of the estimators . Condition (E2) appears as a good
compromise between analytic and statistical requirements, which we call
adequacy. The two steps optimization procedure produces consistent
estimators of both true parameters $\theta _{T}$ and $q_{T}$.

Adequacy holds as follows:

Either the class $E$ is lower bounded,and $q^{\delta }$ is Lipschitz
uniformly over $E$ for some $\delta \in \left( 0,1\right] $

or

the class $E$ cannot be uniformly lower bounded on $K$ but \textit{\ }$%
q^{\delta }$\textit{\ }is \ Lipschitz uniformly over\textit{\ }$E$\textit{\ }%
for some\textit{\ }$\delta :$\textit{\ }$0<\delta <\alpha $.

Additionally adequacy requires sharp requirements which enforce
identifiability, mainly strong separation between the submodels indexed by $%
\theta $, see (M1), (M2), (M3) and (M4); condition (M3) establishes a
connection between the structure of the model and the divergence.

Other semi parametric models of the same type frequently occur in the
econometric or reliability literature, for example when the nuisance
parameter consists in subsets of regular convex or monotone bounded
densities (see e.g.\cite{Van der Vaart Wellner} Chapter 3) , or models with
restricted hazard rate functions.

The limit distributions of the couple of parameters $\theta _{T}$ and $q_{T}$
are not handled in this paper and can be studied making use of nowadays
classical semiparametric inferential methods see \cite{VanderVaart 1998}\cite%
{Tsiatis2006}; however due to the two steps framework of our optimization
proposal, it could be wise to consider approximating schemes (sieves or
RKHS) for the nuisance parameter space. This is postponed to future work.

\bigskip

\ 

\section{Appendix}

\subsection{\protect\bigskip Proof of Theorem \protect\ref{theoreme 1}}

First case: Suppose that $P_{0}=P_{\theta _{0}}\in \mathcal{M}_{\mathcal{E}}$%
., i.e. $P_{0}$ $\in \mathcal{M}_{\theta _{0_{E}}}.$ Then%
\begin{equation*}
\inf_{Q\in \mathcal{M}_{\theta _{0_{\mathcal{E}}}}}D_{\alpha }(Q,P_{0})=0.
\end{equation*}%
.\newline
Since $\mathcal{M}_{\mathcal{E}}\supset \mathcal{M}_{\theta _{0_{\mathcal{E}%
}}}$, we have%
\begin{equation*}
\inf_{Q\in \mathcal{M}_{\mathcal{E}}}D_{\alpha }(Q,P_{0})=0.
\end{equation*}%
\newline
Furthermore, $\theta _{0}$ realizes $\inf_{Q\in \mathcal{M}_{\theta
_{0_{E}}}}D_{\alpha }(Q,P_{0})=0$.\newline
So%
\begin{equation*}
\theta _{0}\in \arg \inf_{\theta }\inf_{Q\in \mathcal{M}_{\theta _{0_{%
\mathcal{E}}}}}D_{\alpha }(Q,P_{0}).
\end{equation*}%
We prove that $\theta _{0}$ is the only parameter $\theta $ that satisfies $%
\inf_{Q\in \mathcal{M}_{\theta _{0_{E}}}}D_{\alpha }(Q,P_{0})=0$.\newline
Suppose that $\theta _{1}\neq \theta _{0}$ such that $\theta _{1}\in \arg
\inf_{\theta }\inf_{Q\in \mathcal{M}_{\theta _{0_{\mathcal{E}}}}}D_{\alpha
}(Q,P_{0})$.Then 
\begin{equation*}
\inf_{Q\in \mathcal{M}_{\theta _{1_{E}}}}D_{\alpha }(Q,P_{0})=\inf_{Q\in 
\mathcal{M}_{\theta _{0_{E}}}}D_{\alpha }(Q,P_{0})=0.\newline
\end{equation*}%
Since $\mathcal{M}_{\theta _{1_{\mathcal{E}}}}\subset \mathcal{M}_{\theta
_{1}}$\newline
\begin{equation*}
0=\inf_{Q\in \mathcal{M}_{\theta _{1_{E}}}}D_{\alpha }(Q,P_{0})\geq
\inf_{Q\in \mathcal{M}_{\theta _{1}}}D_{\alpha }(Q,P_{0})\geq 0.\newline
\end{equation*}%
Hence 
\begin{equation*}
\inf_{Q\in \mathcal{M}_{\theta _{1}}}D_{\alpha }(Q,P_{0})=0.
\end{equation*}%
\newline
Therefore $\theta _{0}$ is the only $\theta $ such that $P_{0}\in \mathcal{M}%
_{\theta _{0}}$ which proves that $\theta _{1}$ does not exist (otherwise $%
P_{0}=P_{\theta _{1}}$ due to (\ref{M1}).\newline
Second case: Suppose that $\ \ P_{0}=P_{\theta _{0}}\in \mathcal{M}$ and $%
P_{0}\notin \mathcal{M}_{\mathcal{E}}.$ Recall that%
\begin{equation*}
\theta _{0}=\arg \inf_{\theta }\inf_{Q\in \mathcal{M}_{\theta }}D_{\alpha
}(Q,P_{0}).
\end{equation*}%
\newline
We now show that 
\begin{equation*}
\theta _{0}=\arg \inf_{\theta }\inf_{Q\in \mathcal{M}_{\theta _{\mathcal{E}%
}}}D_{\alpha }(Q,P_{0}).
\end{equation*}%
.\newline
Project $P_{0}=P_{\theta _{0}}$ on $\mathcal{M}_{\mathcal{E}}$ and define 
\newline
\begin{equation*}
\theta _{1}\in \arg \inf_{\theta }\inf_{Q\in \mathcal{M}_{\theta _{\mathcal{E%
}}}}D_{\alpha }(Q,P_{0}).
\end{equation*}%
Assume that $\theta _{1}\neq \theta _{0}$ .\newline
We then have

\begin{equation*}
\inf_{Q\in \mathcal{M}_{\theta _{1_{\mathcal{E}}}}}D_{\alpha }(Q,P_{0})\leq
\inf_{Q\in \mathcal{M}_{\theta _{\mathcal{E}}}}D_{\alpha }(Q,P_{0})
\end{equation*}%
for all $\theta $ by definition of $\theta _{1}$ . So with $\theta =\theta
_{0}$ , we have 
\begin{equation}
\inf_{Q\in \mathcal{M}_{\theta _{1_{E}}}}D_{\alpha }(Q,P_{0})\leq \inf_{Q\in 
\mathcal{M}_{\theta _{0_{\mathcal{E}}}}}D_{\alpha }(Q,P_{0})  \label{contra}
\end{equation}%
Under (\ref{cond regul par rapp a E}) it holds $D_{\alpha }(\mathcal{M}%
_{\theta _{0_{E}}},P_{\theta _{0}})<D_{\alpha }(\mathcal{M}_{\theta _{%
\mathcal{E}}},P_{\theta _{0}})$, for all $\theta \neq \theta _{0}$.\newline
Hence (\ref{contra}) is impossible, so $\theta _{1}=\theta _{0}$ .We have
proved (\ref{Thm1}).

\subsection{Proof of Theorem \protect\ref{theoreme 3}}

Assume that $(Q_{n})_{n\geq 1}\subset \mathcal{M}_{\mathcal{E}}$ and assume
that there exists $q$ such that 
\begin{equation*}
\sup_{x}|q_{n}(x)-q(x)|\longrightarrow 0,
\end{equation*}%
with $q_{n}(x):=\left( dQ_{n}/d\lambda \right) (x).$ Define $%
Q(A):=\int_{A}q(x)\mathrm{d\lambda }(x)$for any set $A$ and we have $\left(
Q_{n}{\underset{st}{\longrightarrow }}Q\right) $ . 
We prove that $Q\in \mathcal{M}_{\mathcal{E}},$ stating

(A) $q$ is a density

(B) $\ \int_{K}g(x,\theta )q(x)\mathrm{d}x=0$ for some $\theta $

(C) $q$ satisfies (E1) and (E2).

We prove (A); 
This follows from Prohorov Theorem.

We prove (B); Let $\theta _{n}$ be defined by $\int g(x,\theta _{n})q_{n}(x)%
\mathrm{d}x=0;$ such a $\theta _{n}$ indeed exists since $Q_{n}\in \mathcal{M%
}$.\newline
Since $\Theta $ is a compact set in $\mathbb{R}^{d}$,we select ${n_{j}}%
\subset {n}$ such that the subsequence $\theta _{n_{j}}$ admits a limit $%
\underline{\theta }$ and $\int g(x,\theta _{n_{j}})q_{n_{j}}(x)\mathrm{d}x=0$
.\newline
We prove that $\left\vert \int_{K}g(x,\underline{\theta })q(x)\mathrm{d}%
x\right\vert =0$ .
\newline

Indeed 
\begin{align*}
\left\vert \int_{K}g(x,\underline{\theta })q(x)\mathrm{d}x\right\vert & \leq
\left\vert \int_{K}g(x,\underline{\theta })q_{n_{j}}(x)\mathrm{d}%
x\right\vert +\left\vert \int_{K}g(x,\underline{\theta })q(x)\mathrm{d}%
x-\int_{K}g(x,\underline{\theta })q_{n_{j}}(x))\mathrm{d}x\right\vert \\
& \leq B+A
\end{align*}%
\begin{equation*}
A=\left\vert \int_{K}g(x,\underline{\theta })\left( q(x)-q_{n_{j}}(x)\right) 
\mathrm{d}x\right\vert
\end{equation*}%
which tends to $0$ by ($\ref{G2}$).\newline
Next%
\begin{equation*}
B\leq \int_{K}\left\vert g(x,\underline{\theta })-g(x,\theta
_{n_{j}})\right\vert q_{n_{j}}(x)\mathrm{d}x+\left\vert \int_{K}g(x,\theta
_{n_{j}})q_{n_{j}}(x)\mathrm{d}x\right\vert \leq C+D
\end{equation*}%
and $D=0$ by definition of $\theta _{n_{j}}$. \newline
\newline
Hence

\begin{eqnarray*}
B &\leq &C=\int_{K}\left\vert g(x,\underline{\theta })-g(x,\theta
_{n_{j}})\right\vert q_{n_{j}}(x)\mathrm{d}x \\
&\leq &\sup_{x\in K}\left\vert g(x,\underline{\theta })-g(x,\theta
_{n_{j}})\right\vert \int_{K}q_{n_{j}}(x)\mathrm{d}x \\
&=&\sup_{x\in K}\left\vert g(x,\underline{\theta })-g(x,\theta
_{n_{j}})\right\vert \rightarrow 0.
\end{eqnarray*}%
We have proved that any converging sequence $\theta _{n_{j}}$ satisfies $%
\int_{K}g(x,\underline{\theta })q(x)\mathrm{d}x$ when $\underline{\theta }%
=\lim_{n_{j}\rightarrow \infty }\theta _{n_{j}}$ . \newline
Consider two converging subsequences ${n_{j}}$ and ${n_{j}^{\prime }}$ with $%
\theta _{n_{j}}\rightarrow \underline{\theta }$ and $\theta _{n_{j}}^{\prime
}\rightarrow \bar{\theta}$,we have 
\begin{equation*}
\int_{K}g(x,\underline{\theta })q(x)\mathrm{d}x=\int_{K}g(x,\bar{\theta})q(x)%
\mathrm{d}x.
\end{equation*}%
By ($\ref{M1}$) it follows that $\underline{\theta }=\bar{\theta}$ therefore
we have proved that \ there exists a unique $\theta \in \Theta $ such that 
\begin{equation*}
\int_{K}g(x,\theta )q(x)\mathrm{d}x=0
\end{equation*}%
which proves (B).

We prove (C); firstly there exists some $N>0$ such that $|q(x_{0})|\leq N$ .
Indeed \newline
\begin{equation*}
|q(x_{0})-q_{n}(x_{0})+q_{n}(x_{0})|\leq
|q_{n}(x_{0})|+|q_{n}(x_{0})-q(x_{0})|\leq N+|q_{n}(x_{0})-q(x_{0})|\leq
N+\varepsilon
\end{equation*}%
for all $\varepsilon >0$ and therefore $|q(x_{0})|\leq N$, $\ $since \newline
\begin{equation*}
|q_{n}(x_{0})-q(x_{0})|\rightarrow 0.
\end{equation*}%
This proves (E1).\ We prove that $q$ satisfies (\ref{Lipschitz unif pour q
alfa}).

Assume that there exists $\left( x,y\right) $ in $K$ such that $\left\vert
q^{\alpha }(x)-q^{\alpha }(y)\right\vert >M\left\vert x-y\right\vert .$ By
the triangle Inequality it then holds 
\begin{equation*}
\left\vert q^{\alpha }(x)-q^{\alpha }(y)\right\vert \leq \left\vert
q_{n}^{\alpha }(x)-q_{n}^{\alpha }(y)\right\vert +2\varepsilon _{n}
\end{equation*}%
where $\varepsilon _{n}:=\sup_{x\in K}\left\vert q^{\alpha
}(x)-q_{n}^{\alpha }(x)\right\vert \rightarrow 0$, whence $\left\vert
q_{n}^{\alpha }(x)-q_{n}^{\alpha }(y)\right\vert >M\left\vert x-y\right\vert
+\varepsilon _{n}^{\prime }$ with $\varepsilon _{n}^{\prime }$ $\rightarrow
0 $, a contradiction. Now either (E2) (i) pr (E2) (ii) clearly hold.

\subsection{Proof of Proposition \protect\ref{proposition 2}}

We prove that $A_{E}(a)$ is a closed subset in $\mathcal{M}_{\mathcal{E}}$
equipped with the strong topology . Recall that $Q\rightarrow D_{\alpha
}(Q,P)$ l.s.c is equivalent to $A_{\mathcal{E}}(a)$ is closed. \newline
Let $Q_{n}\in A_{\mathcal{E}}(a)\cap \mathcal{M}_{\mathcal{E}}$. Denote $%
\frac{dQ_{n}}{d\lambda }(x)=q_{n}(x)$ with $q_{n}\in E$,and assume that
there exists a function $q$ defined on $K$ such that 
\begin{equation*}
\sup_{x\in K}|q_{n}(x)-q(x)|\rightarrow 0.
\end{equation*}%
Define 
\begin{equation*}
\frac{dQ}{d\lambda }(x)=q(x).
\end{equation*}%
We prove that $q\in E$ and that $Q\in A_{\mathcal{E}}(a)$ with $Q(A):=\int
1_{A}(x)q(x)\mathrm{d}\lambda (x)$.

Since $\mathcal{M}_{\mathcal{E}}$ is closed, (see Theorem \ref{theoreme 3})
the measure $Q$ defined by $Q(A)=\int 1_{A}(x)q(x)\mathrm{d}\lambda (x)$ for
all $A\in \mathcal{B}(\mathbb{R})$ is in $\mathcal{M}_{\mathcal{E}}$.\newline
It remains to prove that $D_{\alpha }(Q,P)\leq a$.\newline

It holds $\sup_{x\in K}\left\vert q_{n}^{\beta }(x)-q^{\beta }(x)\right\vert
\rightarrow 0$ for all $\beta >0.$

\bigskip Consider now the mapping 
\begin{equation*}
x\rightarrow \varphi (q_{n}(x),p(x))-\varphi (q(x),p(x)).\newline
\end{equation*}%
Since 
\begin{equation*}
\varphi (q_{n}(x),p(x))-\varphi (q(x),p(x))=q_{n}^{\alpha +1}(x)-q^{\alpha
+1}(x)-\left( 1+\frac{1}{\alpha }\right) p(x)\left( q_{n}^{\alpha
}(x)-q^{\alpha }(x)\right) ;
\end{equation*}%
.\newline
it holds 
\begin{equation*}
\sup_{x\in K}|\varphi (q_{n}(x),p(x))-\varphi (q(x),p(x))|\rightarrow 0.
\end{equation*}%
\newline
Integrating we have

\begin{equation}
\int \varphi (q_{n}(x),p(x))\mathrm{d}x-\delta \leq \int \varphi (q(x),p(x))%
\mathrm{d}x=D_{\alpha }(Q,P)\leq \int \varphi (q_{n}(x),p(x))\mathrm{d}%
x+\delta  \label{Ineg1}
\end{equation}%
for any $\delta >0$, for $n$ large. 
Since $Q_{n}\in A_{\mathcal{E}}(a),\int \varphi (q_{n}(x),p(x))\mathrm{d}%
x\leq a$; the inequality (\ref{Ineg1}) becomes 
\begin{align*}
\int \varphi (q_{n}(x),p(x))\mathrm{d}x-\delta \leq \int \varphi (q(x),p(x))%
\mathrm{d}x\leq \int \varphi (q_{n}(x),p(x))\mathrm{d}x+\delta & \leq \\
& a+\delta
\end{align*}%
So $\int \varphi (q(x),p(x))\mathrm{d}x\leq a$; hence $Q\in A_{\mathcal{E}%
}(a)$ \ and thus $A_{\mathcal{E}}(a)$ is a closed set in $\mathcal{M}_{%
\mathcal{E}}$.

\subsection{Proof of Lemma \protect\ref{Lemma Cond 2 VdW}}

We thus prove Condition (2) in Lemma \ref{lemme 3}. By Proposition \ref%
{proposition 3} 
\begin{equation*}
Q_{\theta }^{\ast }:=\arg \inf_{Q\in \mathcal{M}_{\theta _{\mathcal{E}%
}}}R_{\alpha }(Q,P_{0})
\end{equation*}%
exists with uniqueness. Denote $q_{\theta }^{\ast }:=\frac{dQ^{\ast }(\theta
)}{d\lambda }$ . It holds 
\begin{equation*}
\inf_{\Vert q-q_{\theta }^{\ast }\Vert >\varepsilon ,q\in \mathcal{M}%
_{\theta _{\mathcal{E}}}}R_{\alpha }(Q,P_{0})>R_{\alpha }(Q^{\ast }(\theta
),P_{0}).
\end{equation*}%
Indeed by definition for all $Q$ , such that $\frac{dQ}{d\lambda }(x)=q(x)$ 
\begin{equation*}
R_{\alpha }(Q^{\ast }(\theta ),P_{0})\leq R_{\alpha }(Q,P_{0})
\end{equation*}%
and therefore 
\begin{equation*}
\text{\ }R_{\alpha }(Q^{\ast }(\theta ),P_{0})\leq \inf_{\Vert q-q^{\ast
}(\theta )\Vert >\varepsilon }R_{\alpha }(Q,P_{0}).
\end{equation*}%
Now let $Q^{\ast }(\theta )$ and denote $dQ^{\ast }(\theta )/d\lambda
(x)=q^{\ast }(\theta )(x)$ and $Q$ such that $dQ(\theta )/d\lambda
(x)=q(\theta )(x)$. We prove that the inequality is strict . From the above
display we get 
\begin{equation*}
R_{\alpha }(Q^{\ast }(\theta ),P_{0})+\frac{1}{\alpha }\int p_{0}^{\alpha
+1}(x)\mathrm{d}x\leq \inf_{\Vert q-q^{\ast }(\theta )\Vert >\varepsilon
}\left\{ R_{\alpha }(Q,P_{0})+\frac{1}{\alpha }\int p_{0}^{\alpha +1}(x)%
\mathrm{d}x\right\}
\end{equation*}

i.e. 
\begin{equation*}
D_{\alpha }(\mathcal{M}_{\theta _{\mathcal{E}}},P_{0})\leq \inf_{\Vert
q-q^{\ast }(\theta )\Vert >\varepsilon ,q\in \mathcal{M}_{\theta _{\mathcal{E%
}}}}D_{\alpha }(Q,P_{0}).
\end{equation*}%
Now if equality holds, there exists $q\in \mathcal{M}_{\theta _{\mathcal{E}%
}} $ , $q\neq q^{\ast }(\theta )$ such that 
\begin{equation}
D_{\alpha }(\mathcal{M}_{\theta _{\mathcal{E}}},P_{0})=D_{\alpha }(Q^{\ast
}(\theta ),P_{0})=D_{\alpha }(Q,P_{0}).  \label{fin preuve Lemma 6}
\end{equation}
It holds $Q\neq Q^{\ast }(\theta )$ since both $q^{\ast }(\theta )$ and $%
q\in E$ . But the projection of $P_{0}$ on $\mathcal{M}_{\theta _{\mathcal{E}%
}}$ is unique, so (\ref{fin preuve Lemma 6}) cannot hold. \newline

\subsection{Proof of Proposition \protect\ref{proposition 4}}

By Theorem \ \ref{theoreme 10} for all $\theta $%
\begin{equation*}
d(q_{n}(\theta ),q_{\theta }^{\ast })\rightarrow 0\text{ in probability.}
\end{equation*}%
\bigskip We want to prove that uniform convergence upon $\theta $ holds.
Define $\theta _{n}$ $\ $by%
\begin{equation}
sup_{\theta \in \Theta }d(q_{n}(\theta ),q_{\theta }^{\ast })=d(q_{n}(\theta
_{n}),q_{\theta _{n}}^{\ast }).\newline
\label{teta_n}
\end{equation}%
\newline
Let $\{n_{j}\}\subset \{n\}$ and suppose $\underline{\theta }$ such that $%
\theta _{n_{j}}\rightarrow \underline{\theta }.$\newline
We show that $d(q_{n_{j}}(\theta _{n_{j}}),q_{\theta _{n_{j}}}^{\ast })>c>0$
cannot hold. \newline
By definition (\ref{teta_n}) 
\begin{eqnarray*}
sup_{\theta \in \Theta }d(q_{n_{j}}(\theta ),q_{\theta }^{\ast })
&=&d(q_{n_{j}}(\theta _{n_{j}}),q_{\theta _{n_{j}}}^{\ast }) \\
&\leq &d(q_{n_{j}}(\theta _{n_{j}}),q_{n_{j}}(\underline{\theta }%
))+d(q_{n_{j}}(\underline{\theta }),q_{\underline{\theta }}^{\ast
})+d(q_{\theta _{n_{j}}}^{\ast },q_{\underline{\theta }}^{\ast }) \\
&=&:I_{1}+I_{2}+I_{3}.\newline
\end{eqnarray*}%
Now $I_{1}=d(q_{n_{j}}(\theta _{n_{j}}),q_{n_{j}}(\underline{\theta }))$ and 
$d(\theta _{n_{j}},\underline{\theta })\rightarrow 0.$ Hence under (\ref{K}%
), $I_{1}{\overset{P}{\longrightarrow }}0.$ Now $I_{2}=d(q_{n_{j}}(%
\underline{\theta }),q_{\underline{\theta }}^{\ast })$ ; both $q_{n_{j}}(%
\underline{\theta })$ and $q_{\underline{\theta }}^{\ast }$ belong to $%
\mathcal{M}_{\underline{\theta }_{\mathcal{E}}}$ ; By Theorem \ref{theoreme
10} in $\mathcal{M}_{\underline{\theta }_{\mathcal{E}}}$ , $d(q_{n_{j}}(%
\underline{\theta }),q_{\underline{\theta }}^{\ast }){\overset{P}{%
\longrightarrow }}0$ so $I_{2}{\overset{P}{\longrightarrow }}0,$\newline
As for $I_{3}=d(q_{\underline{\theta }}^{\ast },q_{\theta _{n_{j}}}^{\ast }){%
\overset{P}{\longrightarrow }}0$ , as for $I_{1}.$ We have proved that%
\begin{equation}
\lim_{j\rightarrow \infty }\sup_{\theta \in \Theta }d\left( q_{n_{j}}(\theta
),q_{\theta }^{\ast }\right) =0\text{ in probability.}
\label{fin dem prop 4}
\end{equation}%
Assume now that (\ref{teta_n}) does not hold. In such a case there exists a
subsequence $\left\{ m_{k}\right\} \subset \left\{ n\right\} $ and $\eta >0$
such that 
\begin{equation*}
\sup_{\theta }d(q_{m_{k}}(\theta ),q_{\theta }^{\ast })>\eta .
\end{equation*}%
Let $\theta _{m_{k}}:=\arg \sup_{\theta }d(q_{m_{kj}}(\theta ),q_{\theta
}^{\ast })$, whence 
\begin{equation*}
d(q_{m_{k}}(\theta _{n_{j}}),q_{\theta _{m_{k}}}^{\ast })>\eta
\end{equation*}%
for all $k$. Extract from $\left\{ m_{k}\right\} $ a further subsequence $%
\left\{ n_{j}\right\} $ along which $\theta _{n_{j}}$ converges to some $%
\underline{\theta }.$ Then (\ref{fin dem prop 4}) proves our claim, by
contradiction.

\subsection{Proof of Lemma \protect\ref{Lemma 8}}

Define 
\begin{equation*}
M_{n}(\theta )=R_{\alpha }(q_{n}(\theta ),P_{n}),\text{ and }M(\theta
)=R_{\alpha }(q_{\theta }^{\ast },P_{0})
\end{equation*}%
with 
\begin{equation*}
R_{\alpha }(q_{n}(\theta ),P_{n})=\int q_{n}^{\alpha +1}(\theta )(x)\mathrm{d%
}x-\left( 1+\frac{1}{\alpha }\right) \int q_{n}^{\alpha }(\theta )(x)\mathrm{%
d}P_{n}(x)
\end{equation*}%
and%
\begin{equation*}
R_{\alpha }(q_{\theta }^{\ast },P_{0})=\int q_{\theta }^{\ast \alpha +1}(x)%
\mathrm{d}x-\left( 1+\frac{1}{\alpha }\right) \int q_{\theta }^{\ast \alpha
}(x)\mathrm{d}P_{0}(x)
\end{equation*}

Hence 
\begin{align*}
\sup_{\theta \in \Theta }\left\vert M_{n}(\theta )-M(\theta )\right\vert &
\leq \sup_{\theta \in \Theta }\int \left\vert q_{n}^{\alpha +1}(\theta
)(x)-q_{\theta }^{\ast \alpha +1}(x)\right\vert \mathrm{d}x+ \\
& \left( 1+\frac{1}{\alpha }\right) \sup_{\theta \in \Theta }\int \left\vert
q_{n}^{\alpha }(\theta )(x)-q_{\theta }^{\ast \alpha }(x)\right\vert \mathrm{%
d}P_{n}(x) \\
& +\left( 1+\frac{1}{\alpha }\right) \sup_{\theta \in \Theta }\left\vert
\int q_{\theta }^{\ast \alpha }(x)\mathrm{d}(P_{n}-P_{0})\right\vert \\
& \leq R_{1}+R_{2}+R_{3}.
\end{align*}%
Now 
\begin{eqnarray*}
R_{1} &=&\sup_{\theta \in \Theta }\int \left\vert q_{n}^{\alpha +1}(\theta
)(x)-q_{\theta }^{\ast \alpha +1}(x)\right\vert \mathrm{d}x \\
&\leq &\sup_{\theta \in \Theta }\sup_{x\in K}\left\vert q_{n}(\theta
)(x)-q_{\theta }^{\ast }(x)\right\vert \times Cste
\end{eqnarray*}%
which tends to $0$ in probability by Proposition \ref{proposition 4}.

Also 
\begin{eqnarray*}
R_{2} &=&\left( 1+\frac{1}{\alpha }\right) \sup_{\theta \in \Theta }\int
\left\vert q_{n}^{\alpha }(\theta )(x)-q_{\theta }^{\ast \alpha
}(x)\right\vert \mathrm{d}P_{n}(x) \\
&\leq &\sup_{\theta \in \Theta }\left( 1+\frac{1}{\alpha }\right) \frac{1}{n}%
\sum \left\vert q_{n}^{\alpha }(\theta )(X_{i})-q_{\theta }^{\ast \alpha
}(X_{i})\right\vert
\end{eqnarray*}

\begin{equation*}
\leq Cste\times \sup_{\theta \in \Theta }\left\vert q_{n}(\theta
)(x)-q_{\theta }^{\ast }(x)\right\vert
\end{equation*}%
which tends to $0$ in probability, making use of Proposition \ref%
{proposition 4} .

Turn to $R_{3}.$ The class of functions $q_{\theta }^{\ast \alpha }$ indexed
by $\theta $ satisfies the three following properties: (i) It is indexed by $%
\theta $ in $\Theta $, a compact subset of $\mathbb{R}^{d}$.(ii) Secondly it
is continuous in $\theta $ for all $x$ in $K$. (iii) Thirdly the function $F$
defined on $K$ by $F(x):=\sup_{\theta \in \Theta }\left\vert q_{\theta
}^{\ast \alpha }(x)\right\vert $ is such that 
\begin{equation*}
\int F(x)dP_{0}(x)<\infty .
\end{equation*}%
Whenever these three facts hold, then 
\begin{equation*}
R_{3}=\left( 1+\frac{1}{\alpha }\right) \sup_{\theta \in \Theta }\left\vert
\int q_{\theta }^{\ast \alpha }(x)\mathrm{d}(P_{n}-P_{0})\right\vert
\end{equation*}%
tends to $0$ in Probability since $\{q_{\theta }^{\ast \alpha }\}_{\theta }$
is a Glivenko-Cantelli class of functions, making use of \cite{Wellner 2005}%
,Chapter 1.6.


\begin{acknowledgement}
The authors are very grateful to the Editor and two anonymous reviewers for
their acute and very constructive remarks and suggestions which helped to
improve on the initial draft.
\end{acknowledgement}


\begin{thebibliography}{99}
\bibitem{Ai Chen Econometrika 2003} Ai C.; Chen X. (2003). Efficient
estimation of models with conditional moment restrictions containing unknown
functions. Econometrica, 71(6), 1795-1843.

\bibitem{Andrews 1994} Andrews, D. W. (1994). Asymptotics for semiparametric
econometric models via stochastic equicontinuity. Econometrica: Journal of
the Econometric Society, 43-72.

\bibitem{Ali Silvey 1966} Ali S. M.; Silvey S. D. (1966) A general class of
coefficients of divergence of one distribution from another. J. Roy.
Statist. Soc. Ser. B 28 , 131--142.

\bibitem{Basak basu extended bregman} Basak S.; Basu, A. (2024). The
Extended Bregman Divergence and Parametric Estimation in Continuous Models.
Sankhya B, 86(2), 333-365.

\bibitem{Basu et al.(1998)} Basu, A; Harris, I. R; Hjort, N. L; Jones, M. C
(1998) Robust and efficient estimation by minimising a density power
divergence.Biometrika 85 , no. 3, 549--559.

\bibitem{Billingsley1968} Billingsley P. (1968).\ Convergence of probability
measures. Wiley, New York.

\bibitem{Br decurninge} Broniatowski M.; Decurninge A. (2016). Estimation
for models defined by conditions on their L-moments. IEEE Transactions on
Information Theory, 62(9), 5181-5198.

\bibitem{Br Kez studia} Broniatowski, M.;Keziou, A. (2006). Minimization of $%
phi$-divergences on sets of signed measures. Studia Scientiarum
Mathematicarum Hungarica, 43(4), 403-442.

\bibitem{Br Kez 2009} Broniatowski M; Keziou A.\ (2009) Parametric
estimation and tests through divergences and the duality technique, Journal
of Multivariate Analysis, 100, 1, 16-36

\bibitem{Br Keziou 2012 JSPI} Broniatowski M.; Keziou A. (2012). Divergences
and duality for estimation and test under moment condition models. Journal
of Statistical Planning and Inference, 142(9), 2554-2573.

\bibitem{Br Stumm 2022} Broniatowski M; Stummer W. (2022) \ A unifying
framework for some directed distances in statistics, Handbook of Statistics,
Vol 46, pp 145-223, Ed Elsevier

\bibitem{Broniatowski and Vajda} Broniatowski, M; Vajda, I \ (2012) Several
applications of divergence criteria in continuous families. Kybernetika
(Prague) 48 (2012), no. 4, 600---636.

\bibitem{Broniatowski et al} Broniatowski M; Toma A; Vajda I. \ (2012)
Decomposable pseudodistances and applications in statistical estimation, J.\
Statist. Planning and Inf, 142, 9, 2574-2585

\bibitem{Cichocki Amari} Cichocki, A; Amari S. I. (2010). Families of
alpha-beta-and gamma-divergences: Flexible and robust measures of
similarities. Entropy, 12(6), 1532-1568.

\bibitem{Csiszar1967} Csiszar I. (1967) Information type measures of
differenceof probability distributions and indirect observations.\ Studia
Sci Math Hung 2, 299-318

\bibitem{Csiszar 1991} Csiszar I. (1991) Why least squares and maximum
entropy? An axiomatic approach to inference for linear inverse problems. The
annals of statistics, , vol. 19, no 4, p. 2032-2066.

\bibitem{EguciShintoKomori2022} Eguchi S.; Komori O. (2022) Minimum
Divergence Methods in Statistical Machine Learning. Springer Japan , ISBN:
9784431569206.

\bibitem{Hosking} Hosking, J. R. (1990). L-moments: analysis and estimation
of distributions using linear combinations of order statistics. Journal of
the Royal Statistical Society Series B: Statistical Methodology, 52(1),
105-124.

\bibitem{klaassen susyanto 2019} Klaassen C. A.; Susyanto N. (2019).
Semiparametrically efficient estimation of euclidean parameters under
equality constraints. Journal of Statistical Planning and Inference, 201,
120-132.

\bibitem{Owen (2001)} Owen A. B. (2001). Empirical likelihood. Boca Raton,
Fla. ISBN 978-1-4200-3615-2.

\bibitem{Pardo 2006} Pardo L. (2006) Statistical Inference based on
Divergence measures.\ Ed. Chapman and Hall, 2006 ISBN\ 1-58488-600-5

\bibitem{Pelletier 2011} Pelletier, B. (2011). Inference in $phi$-families
of distributions. Statistics, 45(3), 223-236.

\bibitem{Tang Yang Zhao (2020)} Tang N.; Yan X.; Zhao X. (2020) Penalized
generalized empirical likelihood with a diverging number of general
estimating equations for censored data. Annals of Statistics, 48, 1, 607-627

\bibitem{Toma Broniatowski 2011} Toma A.; Broniatowski M. (2011) Dual
divergence estimators and tests: robustness results. J. Multivariate Anal.
102 , no. 1, 20--36. 62F03

\bibitem{Tsiatis2006} Tsiatis A. A. (2006). Semiparametric theory and
missing data. New York, NY: Springer New York.

\bibitem{VanderVaart 1998} van der Vaart A. W. (1998) Asymptotic statistics.
Cambridge Series in Statistical and Probabilistic Mathematics, 3. Cambridge
University Press, Cambridge, xvi+443 pp.

\bibitem{Van der Vaart Wellner} van der Vaart A.W.; Wellner J.A.\ \ (1996)
Weak convergence and empirical processes, Springer Series in Statistics

\bibitem{Wellner 2005} Wellner J.A.\ (2005) Empirical Processes: Theory and
Applications, Special topics course, Delft Technical University

\bibitem{Zhang Naudts} Zhang, J.; Naudts, J. (2017). Information geometry
under monotone embedding. Part I: divergence functions. In International
Conference on Geometric Science of Information (pp. 205-214). Cham: Springer
International Publishing.

\end{thebibliography}
\end{document}